\newcommand{\CC}{\mathbb{C}}
\newcommand{\ZZ}{\mathbb{Z}}
\newtheorem{thm}{Theorem}[section]
\newtheorem{defn}[thm]{Definition}
\newtheorem{lem}[thm]{Lemma}
\newtheorem{prop}[thm]{Proposition}
\newtheorem{cor}[thm]{Corollary}
\newtheorem{conj}[thm]{Conjecture}
\theoremstyle{remark}
\newtheorem{rem}[thm]{Remark}
\newtheorem{ex}[thm]{Example}
\begin{document} 

\date{}
\title{Lower central series of a free associative 
algebra over the integers and finite fields}

\author{Surya Bhupatiraju, Pavel Etingof, David Jordan, William Kuszmaul, 
Jason Li}

\maketitle

\begin{abstract}
Consider the free algebra $A_n$ generated over $\Bbb Q$ by $n$
generators $x_1, \ldots, x_n$. Interesting objects
attached to $A = A_n$ are members of its lower central series, $L_i =
L_i(A)$, defined inductively by $L_1=A$, $L_{i+1}= [A,L_{i}]$, and their associated graded components $B_i = B_i(A)$ defined as
$B_i=L_i/L_{i+1}$. These quotients $B_i$ for $i \ge 2$, as well as the
reduced quotient $\bar{B}_1 = A/(L_2+A L_3)$, exhibit a rich geometric
structure, as shown by Feigin and Shoikhet \cite{FS} and later authors,
(\cite{DKM},\cite{DE},\cite{AJ},\cite{BJ}). 

We study the same problem over the integers $\mathbb{Z}$ and finite
fields $\mathbb{F}_p$. New phenomena arise, namely, torsion in $B_i$
over $\mathbb{Z}$, and jumps in dimension over $\mathbb{F}_p$. We
describe the torsion in the reduced quotient $\bar{B}_1$ and
$B_2$ geometrically in terms of the De Rham cohomology of
$\ZZ^n$. As a corollary we obtain a complete description of
$\bar{B}_1(A_n(\mathbb{Z}))$ and $\bar{B}_1(A_n(\mathbb{F}_p))$, 
as well as of $B_2(A_n(\mathbb{Z}[1/2]))$ and $B_2(A_n(\mathbb{F}_p))$,
$p>2$. We also give theoretical and experimental results for $B_i$
with $i>2$, formulating a number of conjectures and questions on their basis.
Finally, we discuss the supercase, when some of the generators 
are odd and some are even, and provide some theoretical 
results and experimental data in this case. 
\end{abstract} 

\section{Introduction}

The lower central series $\lbrace{L_i(A),i\ge 1\rbrace}$, of a noncommutative algebra $A$ is defined inductively by $L_1(A)=A$, and $L_{i+1}(A)=[A,L_i(A)]$. In other words, $L_i(A)$ is spanned by "$i$-commutators"
$[a_1,[a_2,...[a_{i-1}, a_i]...]]$, where $a_1,...,a_i\in A$. It is
interesting to consider the successive quotients
$B_i(A)=L_i(A)/L_{i+1}(A)$, i.e., $i$-commutators modulo
$i+1$-commutators. The role of the $B_i(A)$ is, roughly, that they
characterize, step by step, the deviation of $A$ from being
commutative; this is somewhat similar to the quasiclassical
perturbation expansion in quantum mechanics.

The quotients $B_i(A)$ can be put together in a direct sum
$B(A)=\oplus_{i\ge 1}B_i(A)$. This is a graded Lie algebra: we have $[B_i(A),B_j(A)]\subset B_{i+j}(A)$. It turns out that this
Lie algebra has a big central part (i.e., a part commuting with
everything) in degree 1, namely the image $I$ of $AL_3(A)$
in~$B_1$. Denote $B_1(A)/I$ by $\bar B_1(A)$. Then we have a graded
Lie algebra $\bar B(A)=\bar B_1(A)\bigoplus\oplus_{i\ge 2}B_i(A)$,
generated by degree 1 (i.e., $\bar B_1(A)$).

The lower central series of the free algebra $A_n=A_n(\mathbb{Q})$ over the
field of rational numbers $\mathbb{Q}$ was studied in a number of
papers \cite{DE}, \cite{DKM}, \cite{AJ}, \cite{BJ}, following the
pioneering paper \cite{FS}. In this paper, Feigin and Shoikhet
observed a surprising fact: the graded spaces $\bar{B}_1(A_n)$ and
$B_i(A_n)$, $i\ge 2$, have polynomial, rather than exponential growth,
even though the dimensions of the homogeneous
components of $A_n$ itself grow exponentially. This stems from the
fact that, even though $A_n$ is "the most noncommutative" of all
algebras, the spaces $\bar{B}_1(A_n)$ and $B_i(A_n)$, $i\ge 2$, can be
described in terms of usual, ``commutative" geometry, namely in terms
of tensor fields on the $n$-dimensional space. This allows one to completely describe $\bar
B_1(A_n)$ and $B_2(A_n)$ in terms of differential forms \cite{FS}
and $B_3(A_n)$ as well as $B_i(A_n)$ for some $n$ and $i>3$ in terms
of more complicated tensor fields \cite{DE}, \cite{DKM}, \cite{AJ},
\cite{BJ}. However, the problem of describing of $B_i(A_n(\Bbb Q))$
for all $i,n$, and in particular of computation of the Hilbert series
of these spaces still remains open.

This paper is dedicated to the study of $\bar B_1(A_n)$ and $B_i(A_n)$ over the integers $\Bbb Z$ and over a finite field $\Bbb F_p$. In this case, rich new structures emerge. Namely, in the case of $\Bbb Z$, the groups $\bar B_1(A_n)$, $B_i(A_n)$ develop torsion, and it is interesting to study the pattern of this torsion. In the case of $\Bbb F_p$ the dimensions of certain homogeneous subspaces of $\bar B_1(A_n)$, $B_i(A_n)$ differ from those over $\Bbb Q$, and the patterns of these discrepancies are of interest. 

Our main results are as follows. First of all, we give a complete description of $\bar{B}_1(A_n(\Bbb Z))$ in terms of differential forms, and of the torsion in $\bar B_1(A_n(\mathbb{Z}))$ in terms of the De Rham cohomology of the n-dimensional space over $\mathbb{Z}$. Since $\bar B_1(A_n(\Bbb F_p))=\bar B_1(A_n(\Bbb Z))\otimes \Bbb F_p$, this completely describes  $\bar B_1(A_n(\Bbb F_p))$ in terms of differential forms. Since the Lie algebra 
$\bar B$ is generated by $\bar B_1$, 
this description implies a uniform bound for dimensions of 
the homogeneous subspaces, which depends only on $i$ and $n$ (``polynomial growth'' 
for $B_i(A_n(\Bbb F_p))$). 
Also, we give a description 
of the torsion in $B_2(A_n(\Bbb Z))$, in terms of the De Rham cohomology over the integers
(conjectural for $2^r$-torsion). Since 
$B_2(A_n(\Bbb F_p))=B_2(A_n(\Bbb Z))\otimes \Bbb F_p$, this describes 
$B_2(A_n(\Bbb F_p))$ in terms of differential forms. Further, we give an explicit formula for some torsion elements in $B_2(A_3(\Bbb Z))$, and show that they are a basis of torsion.

We also present some theoretical and computer results (using the MAGMA package, \cite{M}) for torsion in $B_i(A_n(\Bbb Z))$ for $i>2$, and 
propose a number of conjectures based on them. 
In particular, we conjecture that the degree of any torsion element with respect to each variable 
is divisible by the order of the torsion. 
Finally, we provide some computer data and theoretical results for 
torsion in $B_i$ in the supercase, i.e. for 
free algebras $A_{n,k}$ in $n+k$ generators, where 
the first $n$ generators are even and the last $k$ generators are odd, and 
formulate a number of open questions based on this data.  However, the pattern of torsion in higher $B_i$ remains mysterious even at the conjectural level, and will be the subject of further investigation.  

The organization of the paper is as follows. 
In Section 2, we discuss preliminaries. 
In Section 3, we describe the structure of 
$\bar B_1(A_n)$. In Section 4, we describe the structure of $B_2(A_n)$. 
In Section 5, we discuss the experimental data and conjectures 
for the structure of $B_i(A_n(\Bbb Z))$ for $i>2$. 
In Section 6, we discuss the theoretical results, experimental data and open problems in the supercase. 
Finally, in Section 7, we outline some directions of further research. 

{\bf Acknowledgments.} The work of P.E. was partially supported 
by the NSF grant DMS-1000113. The work of D.J. was supported by the NSF grant DMS-1103778.  S.B., W.K., and J.L.  
are grateful to the PRIMES program at the MIT Mathematics Department, where this research was done. 

\section{Preliminaries}
\subsection{The lower central series}

Let $A$ be an algebra over a commutative ring $R$.
In this paper, we will consider algebras over $R = \mathbb{Q},\mathbb{F}_p,\mathbb{Z}$.

\begin{defn}
The lower central series filtration of $A$ is defined as $L_1(A):=A, L_{k+1}(A):= [A,L_{k}(A)]$ for $k \in \mathbb{N}$.
\end{defn}
In other words, we have \[ L_k = \text{Span}_R \{[a_1, [a_2, \ldots, [a_{k-1}, a_k]] \ldots] | a_1,\ldots, a_k\in A\}. \]

\begin{defn}
The associated graded Lie algebra of $A$ is \[ B(A):= \displaystyle\bigoplus_{k \ge 1}{} B_k(A) \] where  $B_k(A):=L_k/L_{k+1}$ for all $k \in \mathbb{N}$.
\end{defn}
\begin{defn}
The two-sided ideals $M_k(A)$ are defined as $M_k(A) := AL_k$ for $k \in \mathbb{N}$.
\end{defn}

\begin{lem} \label{lem:FSLemma1}
$[A,M_3] \subset L_3$.
\end{lem}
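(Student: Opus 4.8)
The plan is to use, for each $a\in A$, that $\operatorname{ad}(a):=[a,-]$ is a derivation of $A$, together with the Jacobi identity and the definitional inclusions $L_{k+1}=[A,L_k]$ (in particular $L_4\subseteq L_3$). Since $L_2=[A,A]$ is spanned by commutators and $L_3=[A,L_2]$, the space $M_3=AL_3$ is spanned by elements $b[c,u]$ with $b,c\in A$ and $u\in L_2$, so it suffices to prove $[a,b[c,u]]\in L_3$ for all such triples. Expanding $b[c,u]=bcu-buc$ and applying the Leibniz rule for $\operatorname{ad}(a)$ to the products $bcu$ and $buc$, then collecting terms using $[a,bc]=[a,b]c+b[a,c]$, one is led to the identity
\[
[a,b[c,u]] \;=\; [a,b]\,[c,u]\;+\;b\,[[a,c],u]\;+\;b\,[c,[a,u]],
\]
which I would verify by a short direct computation; this identity is the key computation.

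Each of the three summands is manifestly in $M_3$: $[c,u]\in[A,L_2]=L_3$ and $[a,b]\in L_2\subseteq A$, so the first lies in $AL_3$; $[a,c]\in L_2$ gives $[[a,c],u]\in[A,L_2]=L_3$, so the second lies in $AL_3$; and $[a,u]\in[A,L_2]=L_3$ gives $[c,[a,u]]\in[A,L_3]=L_4\subseteq L_3$, so the third lies in $AL_3$ as well. Thus the identity immediately re-establishes the easy fact that $M_3$ is $\operatorname{ad}(A)$-stable. The real content of the lemma — and the only genuinely nontrivial point — is that this particular sum lands in the strictly smaller space $L_3$ rather than merely in $M_3$.

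To reach $L_3$, the plan is to argue by induction on total degree (with the number of generators fixed), exploiting that $L_2$ is central modulo $L_3$: for any $v\in L_2$ and $w\in A$ one has $wv-vw=[w,v]\in[A,L_2]=L_3$, and likewise any element of $L_3$ commutes with all of $A$ modulo $L_4\subseteq L_3$. Using such congruences, one repeatedly rewrites each stray product $b\cdot(\text{element of }L_3)$ by migrating its inner $L_2$- or $L_3$-factor past $b$; each step contributes either a genuine nested commutator (an element of $L_4\subseteq L_3$, which is harmless) or a term that the inductive hypothesis can absorb. The main obstacle — and the source of the lemma's nontriviality — is that the obvious manipulations keep leaking into $M_3$ rather than $L_3$ (indeed no single summand of the displayed identity lies in $L_3$ by itself; only their combination does), so one must organize the induction and the rewriting carefully enough that the $M_3$-valued error terms are forced to cancel against one another. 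Making that cancellation explicit, by tracking how the Leibniz and Jacobi expansions interact, is where the real work of the proof lies.
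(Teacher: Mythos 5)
Your reduction and the Leibniz/Jacobi identity $[a,b[c,u]]=[a,b][c,u]+b[[a,c],u]+b[c,[a,u]]$ are correct, and you rightly observe that each summand lies only in $M_3$, so the identity by itself merely re-derives the easy inclusion $[A,M_3]\subset M_3$. But that is exactly where the lemma begins, and it is where your argument stops: the closing paragraph proposes to induct and to ``migrate'' $L_2$- and $L_3$-factors past $b$ in the hope that the $M_3$-valued error terms cancel, and then concedes that making the cancellation explicit ``is where the real work of the proof lies.'' That is a diagnosis of the difficulty, not a proof. Moreover there is no coherent induction parameter on offer --- bracketing and moving factors past $b$ preserve total degree, so an induction on total degree never terminates within a homogeneous component --- and every migration step you describe regenerates a term in $AL_3$ or $L_3A$, so absent a genuinely new identity you remain trapped in $M_3$.

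For context, the paper does not reprove this lemma either: it invokes Lemma 2.2.1 of Feigin--Shoikhet \cite{FS}, whose proof produces explicit commutator identities expressing $[a,b\ell]$ (for $\ell\in L_3$) as a $\pm1$-combination of iterated brackets, and simply observes that since all coefficients appearing there are $\pm 1$ the computation is valid over any commutative ring, not just $\CC$. If you want a self-contained argument you must actually exhibit such identities; your Leibniz/Jacobi expansion is the first step of that computation, not its conclusion.
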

\begin{proof}
Lemma 2.2.1 from \cite{FS} asserts this for algebras over $\CC$. However, the formulas in the proof only involve $\pm 1$ coefficients, so the proof applies for an arbitrary ring $R$. 
\end{proof}

\begin{defn}
The abelian group $\bar{B}_1$ is the quotient $\bar{B}_1:=A/(L_2+M_3)$. 
\end{defn}
Thus, $\bar{B}_1$ is the quotient of $B_1$ by the image $I$ of $M_3$ in $B_1$, which by Lemma ~\ref{lem:FSLemma1} is central in $B(A)$. 

\begin{cor}
The graded module 
$$
\bar{B}(A) := \bar{B}_1 \oplus \displaystyle \bigoplus_{k \ge 2} B_k
$$
is a graded Lie algebra generated in degree 1.  
\end{cor}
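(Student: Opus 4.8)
The plan is to show two things: first, that $\bar B(A)$ is closed under the bracket inherited from $A$, so that it is genuinely a graded Lie algebra; and second, that it is generated by its degree-$1$ component $\bar B_1$. For the first point, recall that $B(A) = \bigoplus_{k\ge 1} B_k$ is already a graded Lie algebra, with $[B_i, B_j] \subseteq B_{i+j}$ coming from the observation that $[L_i, L_j] \subseteq L_{i+j}$ (which follows by induction from the Jacobi identity and the definition $L_{k+1} = [A, L_k]$). So the only issue in passing to $\bar B(A)$ is that we have replaced $B_1$ by the quotient $\bar B_1 = B_1/I$, where $I$ is the image of $M_3 = AL_3$ in $B_1$. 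I must check that the bracket descends, i.e. that $[I, B_j] = 0$ in $B_{1+j}$ for all $j \ge 1$. For $j = 1$ this is the statement that $[M_3, A] \subseteq L_3$, which is exactly Lemma~\ref{lem:FSLemma1}; hence the bracket of $I$ with $B_1$ lands in $L_3/L_3 = 0$ inside $B_2$. For $j \ge 2$ one notes $[M_3, L_j] \subseteq [AL_3, L_j]$; using the Leibniz rule $[ab, c] = a[b,c] + [a,c]b$ one reduces a bracket of $aL_3$-type elements with $L_j$ to $A\cdot[L_3, L_j] + [A, L_j]\cdot L_3 \subseteq A L_{3+j} + L_{j+1} L_3$, and since $j \ge 2$ both summands lie in $L_{j+2} = L_{1+(j+1)}$, so the bracket vanishes in $B_{1+j} = L_{1+j}/L_{2+j}$. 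Thus $I$ is a central ideal of the graded Lie algebra $B(A)$, and $\bar B(A) = B(A)/I$ inherits a well-defined graded Lie algebra structure.

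For the generation statement, I would argue by induction on $k$ that $B_k \subseteq [\,\bar B_1, [\,\bar B_1, \ldots]\,]$, the span of iterated brackets of degree-$1$ elements. The base case $k = 1$ is trivial. For the inductive step, $B_{k+1} = L_{k+1}/L_{k+2}$, and by definition $L_{k+1} = [A, L_k]$ is spanned by elements $[a, \ell]$ with $a \in A$ and $\ell \in L_k$. Modulo $L_{k+2}$, such a bracket is the image of $[\bar a, \bar\ell]$ where $\bar a \in B_1$ and $\bar\ell \in B_k$; here I should check that changing $a$ by an element of $L_2$ or of $M_3$ changes $[a,\ell]$ only by something in $L_{k+2}$ — the $L_2$ part is fine since $[L_2, L_k] \subseteq L_{k+2}$, and the $M_3$ part is killed by the argument of the previous paragraph — so in fact $[\bar a, \bar \ell]$ only depends on the class of $a$ in $\bar B_1$. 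By the inductive hypothesis $\bar\ell$ is a sum of iterated brackets of elements of $\bar B_1$, so $[\bar a, \bar \ell]$ is too, completing the induction.

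I do not expect a serious obstacle here; this is essentially bookkeeping with the Leibniz rule and the filtration inclusions $[L_i, L_j] \subseteq L_{i+j}$, together with Lemma~\ref{lem:FSLemma1} to handle the one genuinely new ingredient, namely that modding out by $M_3$ in degree $1$ is compatible with the bracket. The one point that deserves a little care — and the closest thing to a pitfall — is confirming that the bracket $\bar B_1 \times B_k \to B_{k+1}$ is well-defined on the quotient $\bar B_1$ for \emph{all} $k$, not just $k=1$; the case $k=1$ is Lemma~\ref{lem:FSLemma1} verbatim, but the cases $k \ge 2$ require the short Leibniz-rule computation sketched above. Everything else is the standard fact that the associated graded of a filtered Lie algebra (here the filtration of $A$ by the $L_i$) is a graded Lie algebra, adapted to note that all structure constants involved are $\pm 1$ and hence the argument is valid over any commutative base ring $R$, in particular over $\mathbb{Z}$ and $\mathbb{F}_p$.
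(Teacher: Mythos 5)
Your argument for well-definedness in the $j\ge 2$ case of the centrality of $I$ contains a genuine gap: the two inclusions you invoke are both false. You expand $[a\ell_3,c]=a[\ell_3,c]+[a,c]\ell_3$ and claim the resulting two spaces $AL_{3+j}$ and $L_{j+1}L_3$ each lie in $L_{j+2}$ once $j\ge 2$. Take $j=2$ in $A_2(\ZZ)=\ZZ\langle x,y\rangle$. The element $y\,[x,[x,[x,[x,y]]]]$ lies in $AL_5=M_5$, but its image in $A/[A,A]$ is the cyclic word $2x^4y^2-8x^3yxy+6x^2yx^2y\neq 0$; so it is not even in $L_2$, let alone $L_4$, and $AL_5\not\subset L_4$. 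Likewise $[x,[x,y]]\cdot[x,[x,y]]\in L_3L_3$ has nonzero trace $6x^2yx^2y-8x^3yxy+2x^4y^2$ and hence is not in $L_4$, so $L_3L_3\not\subset L_4$. The Leibniz expansion itself is fine, but the two resulting terms are individually large; only their \emph{sum} lands in $L_{j+2}$, and replacing each term by the enclosing spaces $AL_{3+j}$ and $L_{j+1}L_3$ destroys the cancellation you need.

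The correct way to propagate centrality from $j=1$ to all $j$ is Jacobi, not Leibniz, exploiting $L_j=[A,L_{j-1}]$: for $m\in M_3$, $a\in A$, $\ell\in L_{j-1}$, write
$$[m,[a,\ell]]=[[m,a],\ell]+[a,[m,\ell]].$$
The first term lies in $[L_3,L_{j-1}]\subset L_{j+2}$ by Lemma~\ref{lem:FSLemma1} and the standard inclusion $[L_i,L_k]\subset L_{i+k}$; the second lies in $[A,L_{j+1}]=L_{j+2}$ by the inductive hypothesis $[M_3,L_{j-1}]\subset L_{j+1}$, the base case $j=1$ being Lemma~\ref{lem:FSLemma1} itself. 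Equivalently: since $B(A)$ is visibly generated in degree $1$ (because $L_{k+1}=[A,L_k]$), the single fact $[I,B_1]=0$ already forces $I$ to be central by iterating Jacobi. This is what the paper's terse appeal to Lemma~\ref{lem:FSLemma1} is implicitly using. The rest of your write-up — the generation step, and the observation that all identities involved have $\pm1$ coefficients so the argument works over an arbitrary commutative base ring — is correct once this repair is made.
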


\begin{defn}
The free algebra $A_n(R)$ of rank $n$ over $R$ is the free $R$-module generated by all finite words in the letters $x_1,\ldots, x_n$; the multiplication is given by concatenation of words.
\end{defn}

\subsection{De Rham cohomology over the integers}

We will see that torsion in $\bar B_1(A_n(\Bbb Z))$ and $B_2(A_n(\Bbb Z))$ is related to the De Rham cohomology 
of the $n$-dimensional space over the integers. So let us discuss how to compute this cohomology. 

Denote by $\Omega(R^n)$ the module of differential forms in $n$ variables with coefficients in $R$. 
We have a rank decomposition $\Omega(R^n)=\oplus_{i=0}^n \Omega^i(R^n)$, 
and the De Rham differential $d: \Omega^i(R^n)\to \Omega^{i+1}(R^n)$, which defines the 
De Rham complex of the $n$-dimensional space over $R$.   We will use the notation $\Omega_{cl}$ for closed forms, $\Omega_{ex}$ for exact forms, 
$\Omega^{ev}$ for even forms, $\Omega^{odd}$ for odd forms, $\Omega^+$ for positive rank 
forms, $\Omega^{\ge k}$ for forms of rank $\ge k$, etc. 

If $R$ is a $\Bbb Q$-algebra 
then by the Poincar\'e lemma, the De Rham complex is acyclic in degrees $i>0$, 
and its zeroth cohomology $H^0$ consists of constants (i.e., equals $R$).
However, this fails for general rings. In this section we compute the cohomology 
of this complex for $R=\Bbb Z$. 

\begin{lem}\label{omega1} We have:
$$H^k(\Omega(\ZZ))[m] \cong \left\{\begin{array}{ll} \ZZ,& k=0, \, m=0\\ \ZZ/m, & k=1, \, m > 0, \\0,& \mathrm{otherwise} \end{array}\right.$$
(Here $[m]$ denotes the degree $m$ part). 
\end{lem}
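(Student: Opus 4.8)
The plan is to observe that $\Omega(\ZZ)$ is a two-term complex and to compute its cohomology one internal degree at a time.

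First I would write the complex out explicitly: in one variable, $\Omega^0(\ZZ) = \ZZ[x]$, $\Omega^1(\ZZ) = \ZZ[x]\,dx$ is free of rank one over $\ZZ[x]$, $\Omega^k(\ZZ) = 0$ for $k \ge 2$, and the De Rham differential is $d(f) = f'(x)\,dx$. Hence $H^k(\Omega(\ZZ)) = 0$ for all $k \ge 2$ with no argument. Equip $\Omega(\ZZ)$ with its internal grading, in which $x^m$ has weight $m$ and $dx$ has weight $1$ (so $d$ is weight-preserving); then the complex is the direct sum over $m \ge 0$ of its weight-$m$ homogeneous pieces, and it suffices to compute $H^k[m]$ in each fixed weight $m$.

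Then I would dispatch the graded pieces. For $m = 0$ we have $\Omega^0[0] = \ZZ\cdot 1$, $\Omega^1[0] = 0$, and $d$ vanishes, so $H^0[0] = \ZZ$ and $H^1[0] = 0$. For $m \ge 1$ we have $\Omega^0[m] = \ZZ\cdot x^m$, $\Omega^1[m] = \ZZ\cdot x^{m-1}\,dx$, and $d(x^m) = m\,x^{m-1}\,dx$, so the weight-$m$ part of the complex is $\ZZ \xrightarrow{\ \cdot m\ } \ZZ$. Multiplication by $m$ is injective on $\ZZ$ with cokernel $\ZZ/m$, giving $H^0[m] = 0$ and $H^1[m] = \ZZ/m$.

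Collecting these values yields exactly the stated formula; note that the boundary case $m = 1$ gives $\ZZ/1 = 0$, consistent with the ``$0$ otherwise'' clause, and that $H^1$ is supported in weights $m > 0$ only because $\Omega^1[0] = 0$. There is no genuine obstacle here — the proof is a short direct computation — and the only points worth stating carefully are the grading convention ($dx$ has weight $1$, so that $x^{m-1}\,dx$ sits in weight $m$) and the fact that $\Omega^1(\ZZ)$ is free of rank one over $\ZZ[x]$, so that each graded piece of the complex is a rank-one free abelian group.
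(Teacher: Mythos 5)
Your proof is correct, and since the paper's own proof is just the remark ``straightforward computation,'' your explicit writing-out of the graded pieces $\ZZ \xrightarrow{\cdot m} \ZZ$ is precisely the computation being alluded to. The grading convention (weight of $dx$ is $1$) is stated carefully, and the degenerate cases $m=0$ and $m=1$ are handled correctly.
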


\begin{proof} Straightforward computation.\end{proof}

\begin{cor}\label{kunnethcor} Let $H^r_n := H^r (\Omega (\ZZ^n))$. We have a (noncanonically) split short exact sequence in cohomology,
\[ 0 \to H^{r}_{n-1} \oplus H^{r-1}_{n-1} 
\otimes H^1_1 \to H^r_n \to \mathrm{Tor}(H^r_{n-1}, H^1_1) \to 0. \]

Moreoever, this is a short exact sequence of graded abelian groups, with respect to the natural $\ZZ^n$-grading on each term.  
\end{cor}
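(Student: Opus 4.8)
The plan is to derive Corollary~\ref{kunnethcor} from the algebraic Künneth theorem applied to the tensor decomposition of de Rham complexes. The key structural fact is that $\Omega(\ZZ^n) \cong \Omega(\ZZ^{n-1}) \ot_{\ZZ} \Omega(\ZZ^1)$ as complexes of $\ZZ$-modules (indeed as complexes of graded $\ZZ$-modules, where the grading is the polynomial degree): a differential form in $x_1,\dots,x_n$ decomposes uniquely according to whether $dx_n$ appears, and the de Rham differential is the tensor-product differential $d \ot 1 + (-1)^{(\cdot)} 1 \ot d$. Since each $\Omega^i(\ZZ^n)$ is a free $\ZZ$-module (hence flat), the Künneth short exact sequence applies with no correction beyond the $\mathrm{Tor}$ term, and it is (noncanonically) split because we are over the PID $\ZZ$ and the terms are finitely generated in each graded piece.

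First I would make the tensor decomposition $\Omega(\ZZ^n) = \Omega(\ZZ^{n-1}) \ot \Omega(\ZZ^1)$ precise, checking that the differential respects it with the correct Koszul sign, and noting that this is an isomorphism of complexes of graded $\ZZ$-modules once we put the total $\ZZ^n$-grading on the right-hand side (the $\ZZ^{n-1}$-grading tensored with the $\ZZ$-grading). Then I would invoke the Künneth formula for a bounded complex of free modules over a PID: for complexes $C_\bullet = \Omega(\ZZ^{n-1})$ and $D_\bullet = \Omega(\ZZ^1)$ one gets
\[ 0 \to \bigoplus_{p+q=r} H^p(C) \ot H^q(D) \to H^r(C \ot D) \to \bigoplus_{p+q=r+1} \mathrm{Tor}_1^{\ZZ}(H^p(C), H^q(D)) \to 0. \]
Substituting $H^q(\Omega(\ZZ^1))$, which by Lemma~\ref{omega1} is concentrated in $q=0$ and $q=1$, collapses the left-hand sum to $H^r_{n-1} \ot H^0_1 \oplus H^{r-1}_{n-1} \ot H^1_1$ and the right-hand sum to $\mathrm{Tor}(H^r_{n-1}, H^1_1) \oplus \mathrm{Tor}(H^{r+1}_{n-1}, H^0_1)$; since $H^0_1 = \ZZ$ is free, its $\mathrm{Tor}$ terms vanish and the $\ot H^0_1$ factor is the identity, yielding exactly the stated sequence. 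The grading compatibility is automatic because every map in the Künneth sequence is induced by maps of graded complexes.

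The splitting is the one subtlety worth a sentence: the standard proof of Künneth over a PID produces a (noncanonical) splitting whenever the cycle and boundary modules are projective, which holds here since submodules of free $\ZZ$-modules are free; restricting to each fixed multidegree $m \in \ZZ^n$ everything is finitely generated, so there is no set-theoretic obstruction to splitting degree by degree and assembling a graded splitting. I expect the main obstacle — really the only place needing care rather than citation — to be verifying the sign conventions in the tensor-product differential so that the Koszul-signed de Rham differential on $\Omega(\ZZ^n)$ literally matches $d_C \ot 1 \pm 1 \ot d_D$; once that bookkeeping is done, the corollary is a direct application of the Künneth theorem together with Lemma~\ref{omega1}.
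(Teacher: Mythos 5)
Your proposal is correct and is exactly the argument the paper has in mind: the paper's proof consists of the single sentence ``Apply the Künneth Theorem and Lemma~\ref{omega1} to the decomposition $\Omega(\ZZ^n)=\Omega(\ZZ^{n-1})\otimes \Omega(\ZZ)$,'' and you have simply filled in the standard details (the collapse of the Künneth sum using the concentration of $H^\bullet_1$ in degrees $0,1$, the vanishing of $\mathrm{Tor}$ against the free $H^0_1=\ZZ$, and the degreewise splitting over the PID $\ZZ$).
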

\begin{proof}
Apply the 
K\"unneth Theorem and Lemma \ref{omega1} to the decomposition 
$$
\Omega(\ZZ^n)=\Omega(\ZZ^{n-1})\otimes \Omega(\ZZ).
$$ 
\end{proof}

\begin{cor}\label{Hev-description} (see also \cite{FR}) Suppose that all $m_i>0$.  Then we have an isomorphism:
$$\text{H}^{i}(\Omega(\Bbb Z^n))[m_1, \ldots , m_n] \cong (\ZZ/gcd(m_1,\ldots,m_n))^{{n-1}\choose{i-1}},
$$ 
where $[m_1,...,m_n]$ denotes the part of multidegree $m_1,...,m_n$.  
\end{cor}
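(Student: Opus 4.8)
The plan is to induct on $n$, using the (noncanonically) split short exact sequence of $\mathbb{Z}^n$-graded abelian groups from Corollary \ref{kunnethcor} together with Lemma \ref{omega1}. The base case $n=1$ is precisely Lemma \ref{omega1}: for $m_1>0$ it gives $\mathbb{Z}/m_1$ in cohomological degree $i=1$ and $0$ otherwise, matching $(\mathbb{Z}/\gcd(m_1))^{\binom{0}{i-1}}$ once we adopt the convention $\binom{0}{-1}=0$.

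For the inductive step, fix $m_1,\ldots,m_n>0$ and set $d:=\gcd(m_1,\ldots,m_{n-1})$, so that $\gcd(m_1,\ldots,m_n)=\gcd(d,m_n)$. The first observation I would make is a grading one: since $H^0(\Omega(\mathbb{Z}))=\mathbb{Z}$ sits in degree $0$ while $H^1(\Omega(\mathbb{Z}))$ sits in strictly positive degrees, the summand $H^r_{n-1}$ of the sub-object in Corollary \ref{kunnethcor} is concentrated in multidegrees whose last coordinate is $0$; hence it contributes nothing in multidegree $[m_1,\ldots,m_n]$ with $m_n>0$. Restricting the split sequence to this multidegree and using Lemma \ref{omega1} to identify $H^1_1[m_n]=\mathbb{Z}/m_n$, I would obtain
$$H^r_n[m_1,\ldots,m_n]\cong\bigl(H^{r-1}_{n-1}[m_1,\ldots,m_{n-1}]\otimes\mathbb{Z}/m_n\bigr)\oplus\mathrm{Tor}\bigl(H^{r}_{n-1}[m_1,\ldots,m_{n-1}],\,\mathbb{Z}/m_n\bigr).$$

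Into this I would feed the inductive hypothesis $H^{s}_{n-1}[m_1,\ldots,m_{n-1}]\cong(\mathbb{Z}/d)^{\binom{n-2}{s-1}}$ (valid for all $s$ with the above binomial convention, which also encodes the vanishing of $H^0$ in positive degree), together with the elementary identities $\mathbb{Z}/a\otimes\mathbb{Z}/b\cong\mathbb{Z}/\gcd(a,b)$ and $\mathrm{Tor}(\mathbb{Z}/a,\mathbb{Z}/b)\cong\mathbb{Z}/\gcd(a,b)$. This rewrites the right-hand side as $(\mathbb{Z}/\gcd(d,m_n))^{\binom{n-2}{r-2}+\binom{n-2}{r-1}}$, and Pascal's identity $\binom{n-2}{r-2}+\binom{n-2}{r-1}=\binom{n-1}{r-1}$ completes the induction. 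The computation is routine; the only places that need care are the bookkeeping of the $\mathbb{Z}^n$-grading --- in particular verifying that the ``constant'' summand $H^r_{n-1}$ genuinely drops out in positive last degree --- and treating the small values $r=0,1$ uniformly via the binomial-coefficient convention. I do not anticipate a real obstacle beyond this.
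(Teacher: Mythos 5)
Your proposal is correct and follows essentially the same route as the paper: split the K\"unneth sequence of Corollary \ref{kunnethcor}, restrict to the multidegree $(m_1,\ldots,m_n)$ part (where the $H^r_{n-1}$ summand drops out because $m_n>0$), and close the induction with $\mathbb{Z}/a\otimes\mathbb{Z}/b\cong\mathrm{Tor}(\mathbb{Z}/a,\mathbb{Z}/b)\cong\mathbb{Z}/\gcd(a,b)$ and Pascal's identity. The only cosmetic difference is that you write the Tor term explicitly where the paper abbreviates both terms as ``$/m_n$''.
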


\begin{rem}
Note that this describes the cohomology completely, since 
the case when $m_i=0$ for some $i$ reduces to a smaller number of variables. 
\end{rem}

\begin{proof}
We choose a splitting of the short exact sequence of Corollary \ref{kunnethcor}, and apply Lemma \ref{omega1} to obtain:
$$H^r_n \cong H^{r}_{n-1} \oplus \bigoplus_{m\geq 1} \left(H^{r-1}_{n-1}\otimes \ZZ/m \oplus \mathrm{Tor}(H^r_{n-1}, \ZZ/m)\right).$$
Note that each $\ZZ/m$ in the sum above is in multi-degree $(0,\ldots,0,m)$.  Thus the $(m_1,\ldots,m_n)$-graded part of the above short exact sequence reads:
$$H_n^i[m_1,...,m_n]\cong H_{n-1}^i[m_1,...,m_{n-1}]/m_n\oplus H_{n-1}^{i-1}[m_1,...,m_{n-1}]/m_n.$$
The formula now follows by induction on $n$ and $i$, and the identity ${{r+1}\choose{i+1}}={{r}\choose{i}} + {{r}\choose{i+1}}$.
\end{proof}

\subsection{Universal coefficient formulas for $\bar{B}_1$ and $B_2$}

Let $p$ be a prime and $\mathbb{Z}^N \supset A \supset B$ be abelian groups. 
For an abelian group $A$, let ${\rm tor}_p(A)$ denote the $p$-torsion of $A$. 

\begin{lem}\label{lem:Universal}
Let $A_p$ and $B_p$ be the images of $A$ and $B$ in ${\Bbb F}_p^N$. 
If ${\rm tor}_p(\mathbb{Z}^N / A)=0$ (in particular, if $\mathbb{Z}^N / A$ is free) 
then the natural map \[ A_p / B_p \rightarrow (A/B) \otimes \mathbb{F}_p \] is an isomorphism. 
\end{lem}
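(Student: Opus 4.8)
The plan is to analyze the diagram of short exact sequences obtained from the inclusions $B \subset A \subset \ZZ^N$ together with their reductions mod $p$. First I would write down the short exact sequence $0 \to A \to \ZZ^N \to \ZZ^N/A \to 0$ and tensor it with $\mathbb{F}_p$. By the right-exactness of $\otimes \mathbb{F}_p$, together with the $\mathrm{Tor}$ term $\mathrm{Tor}_1^{\ZZ}(\ZZ^N/A, \mathbb{F}_p) = {\rm tor}_p(\ZZ^N/A)$, the long exact sequence reads
\[
0 \to {\rm tor}_p(\ZZ^N/A) \to A \otimes \mathbb{F}_p \to \mathbb{F}_p^N \to (\ZZ^N/A)\otimes\mathbb{F}_p \to 0.
\]
Under the hypothesis ${\rm tor}_p(\ZZ^N/A)=0$, this shows that the natural map $A \otimes \mathbb{F}_p \to \mathbb{F}_p^N$ is injective, hence its image $A_p$ is canonically identified with $A \otimes \mathbb{F}_p$.

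Next I would apply the same device to $B$, but here I do not need $\mathrm{Tor}$ to vanish: I only need to know that the natural map $B \otimes \mathbb{F}_p \to A \otimes \mathbb{F}_p$ has image equal to $B_p$. This follows because $B_p$ is by definition the image of $B$ in $\mathbb{F}_p^N$, and since $A \otimes \mathbb{F}_p \hookrightarrow \mathbb{F}_p^N$ is injective (just established), the image of $B \otimes \mathbb{F}_p$ in $A \otimes \mathbb{F}_p$ maps isomorphically onto $B_p$. Thus inside $A \otimes \mathbb{F}_p$ we have identified the subgroup $B_p$ with the image of $B \otimes \mathbb{F}_p$.

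Finally I would conclude by a diagram chase, or more cleanly by right-exactness of $\otimes \mathbb{F}_p$ applied to $0 \to B \to A \to A/B \to 0$: this gives $A\otimes\mathbb{F}_p \to A/B \otimes \mathbb{F}_p \to 0$ with kernel the image of $B\otimes\mathbb{F}_p$, i.e. the kernel is exactly $B_p$ under our identification. Therefore $(A\otimes\mathbb{F}_p)/B_p \cong (A/B)\otimes\mathbb{F}_p$. Composing with the identification $A_p \cong A\otimes\mathbb{F}_p$ from the first step yields the desired isomorphism $A_p/B_p \cong (A/B)\otimes\mathbb{F}_p$, and one checks it is the natural map.

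The only subtle point — and the step I expect to require the most care — is the first one: without the hypothesis on ${\rm tor}_p(\ZZ^N/A)$, the map $A\otimes\mathbb{F}_p \to \mathbb{F}_p^N$ need not be injective, so $A_p$ (the image in $\mathbb{F}_p^N$) could be a proper quotient of $A\otimes\mathbb{F}_p$ and the statement would fail. Everything else is a routine application of right-exactness of $-\otimes\mathbb{F}_p$ and the snake lemma; the essential input is that $A$ sits as a \emph{saturated} (at $p$) subgroup of $\ZZ^N$, which is exactly what ${\rm tor}_p(\ZZ^N/A)=0$ encodes.
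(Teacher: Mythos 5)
Your proof is correct, and it takes a route that is genuinely different from the paper's. The paper starts from the short exact sequence $0 \to A/B \to \mathbb{Z}^N/B \to \mathbb{Z}^N/A \to 0$, localizes at $p$, and uses the hypothesis $\mathrm{tor}_p(\mathbb{Z}^N/A)=0$ to see that $(\mathbb{Z}^N/A)\otimes\mathbb{Z}_{(p)}$ is a free $\mathbb{Z}_{(p)}$-module, so the localized sequence splits; tensoring the splitting with $\mathbb{F}_p$ then identifies $(A/B)\otimes\mathbb{F}_p$ with the kernel of $\mathbb{F}_p^N/B_p \to \mathbb{F}_p^N/A_p$, i.e.\ with $A_p/B_p$. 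You instead tensor $0 \to A \to \mathbb{Z}^N \to \mathbb{Z}^N/A \to 0$ with $\mathbb{F}_p$ and use the $\mathrm{Tor}$ long exact sequence, where the hypothesis is precisely the vanishing $\mathrm{Tor}_1^{\mathbb{Z}}(\mathbb{Z}^N/A,\mathbb{F}_p)=0$; this gives $A\otimes\mathbb{F}_p \hookrightarrow \mathbb{F}_p^N$, so $A\otimes\mathbb{F}_p \cong A_p$, and then right-exactness applied to $0 \to B \to A \to A/B \to 0$ finishes the argument. Your version avoids both the localization and the choice of splitting, and makes it transparent exactly where and how the hypothesis enters (as the vanishing of a single $\mathrm{Tor}$ group). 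The only step you leave implicit is "one checks it is the natural map"; it is worth observing that the well-definedness of the map $A_p/B_p \to (A/B)\otimes\mathbb{F}_p$, $\bar a \mapsto [a]\otimes 1$, itself depends on the hypothesis (it amounts to $A\cap p\mathbb{Z}^N = pA$), which is exactly the injectivity you established in the first step, so the check is genuinely harmless here.
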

\begin{proof}
We have $\mathbb{Z}^N/A = \dfrac{\mathbb{Z}^N / B}{A/B}$. Since ${\rm tor}_p(\mathbb{Z}^N/A)=0$, this implies that we have an isomorphism of $p$-local parts
$(\mathbb{Z}^N /B)\otimes {\Bbb Z}_{(p)} = (A/B \oplus \mathbb{Z}^N / A)\otimes \Bbb Z_{(p)}$, 
where $\Bbb Z_{(p)}$ is the ring of rational numbers whose denominator is coprime to $p$. 
When we tensor the equality with $\mathbb{F}_p$, we see that $(A/B) \otimes \mathbb{F}_p$ is the kernel of the projection map $(\mathbb{Z}^N / B) \otimes \mathbb{F}_p \rightarrow (\mathbb{Z}^N / A) \otimes \mathbb{F}_p$, which is $A_p / B_p$, as desired. 
\end{proof}

\begin{cor}\label{unco} 
We have 
\begin{enumerate}
\item[\begin{small}1)\end{small}] $\bar{B}_1(A_n(\mathbb{F}_p)) = \bar{B}_1 (A_n(\mathbb{Z})) \otimes \mathbb{F}_p$;
\item[\begin{small}2)\end{small}] $B_2(A_n(\mathbb{F}_p)) = B_2(A_n(\mathbb{Z})) \otimes \mathbb{F}_p$.
\end{enumerate}
\end{cor}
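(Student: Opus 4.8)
The plan is to deduce both statements from Lemma \ref{lem:Universal} by exhibiting, in each case, the relevant pair $B \subset A \subset \ZZ^N$ of abelian groups and verifying the hypothesis ${\rm tor}_p(\ZZ^N/A)=0$. The common setup: fix a homogeneous multidegree (which bounds the word length, hence makes all the spaces in play finitely generated free $\ZZ$-modules), so that $A_n(\ZZ)$ in that multidegree is some $\ZZ^N$ with a distinguished monomial basis, and note that tensoring with $\mathbb{F}_p$ commutes with taking this graded piece. For part (1), take $A = L_1 = A_n(\ZZ)$ itself (so $\ZZ^N/A = 0$ and the torsion hypothesis is trivially satisfied) and $B = L_2(A_n(\ZZ)) + M_3(A_n(\ZZ))$; then $A/B = \bar B_1(A_n(\ZZ))$ by definition, while $A_p/B_p$ is visibly $\bar B_1(A_n(\mathbb{F}_p))$ provided one checks that $L_2$ and $M_3$ are formed by applying the same $\pm 1$-coefficient bracket and multiplication formulas over any ring, so that $L_2(A_n(\mathbb{F}_p)) = (L_2(A_n(\ZZ)))_p$ and likewise for $M_3$. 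That last point is the analogue of the remark already used in the proof of Lemma \ref{lem:FSLemma1}: the spanning sets for $L_2$ and $M_3$ are images of integral spanning sets.

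For part (2), the natural choice is $A = L_2(A_n(\ZZ))$ and $B = L_3(A_n(\ZZ))$, so that $A/B = B_2(A_n(\ZZ))$, and again $A_p/B_p = B_2(A_n(\mathbb{F}_p))$ by the same "universal formula" argument for the spanning sets of $L_2$ and $L_3$. Now Lemma \ref{lem:Universal} applies once we know ${\rm tor}_p(\ZZ^N/L_2)=0$, i.e. that $A_n(\ZZ)/L_2(A_n(\ZZ)) = B_1(A_n(\ZZ))$ is torsion-free in each multidegree. This is the one genuinely nontrivial input: $B_1(A_n(\ZZ)) = A_n(\ZZ)/[A_n(\ZZ),A_n(\ZZ)]$ is the free $\ZZ$-module on cyclic words (necklaces) in $x_1,\dots,x_n$ — equivalently, the abelianization of the free algebra, which over $\ZZ$ is spanned by cyclic equivalence classes of monomials with no relations among distinct classes — hence free, hence has no $p$-torsion. (One should remark that this is exactly the place where $L_2$ rather than $L_2+M_3$ must be used, since $\bar B_1$ itself does have torsion; it is only the \emph{first} quotient $B_1$ that is free.)

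The main obstacle is the verification that $B_1(A_n(\ZZ))$ is torsion-free, and more precisely that it is "the same" free module in every coefficient ring: one must argue that the commutator subspace $[A_n(\ZZ), A_n(\ZZ)]$ is spanned by the differences $w - w'$ of cyclically equivalent monomials (an integral statement, not just a rational one), so that the quotient has the set of necklaces as an honest $\ZZ$-basis. Given this, everything else is bookkeeping: pass to a fixed multidegree to land in the finitely generated setting of Lemma \ref{lem:Universal}, identify $A/B$ with the asserted graded component of $B_2$ or $\bar B_1$ over $\ZZ$, identify $A_p/B_p$ with the corresponding component over $\mathbb{F}_p$ using that brackets and products are defined by integer ($\pm 1$) coefficient formulas, and conclude by the Lemma. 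Summing over all multidegrees then gives the stated isomorphisms of graded groups.
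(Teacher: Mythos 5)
Your proposal is correct and is essentially the paper's own argument: in part (1) you take $\ZZ^N = A = A_n(\ZZ)[m]$ and $B=(L_2+M_3)[m]$ so the hypothesis of Lemma \ref{lem:Universal} is vacuous, and in part (2) you take $A=L_2[m]$, $B=L_3[m]$ and observe that $\ZZ^N/A = B_1[m]$ is free because it has the cyclic words as a $\ZZ$-basis. The only thing you add beyond the paper is the explicit (and correct) remark that $A_p/B_p$ really is the corresponding space over $\mathbb{F}_p$ because $L_2$, $L_3$, $M_3$ are spanned by the reductions of integral bracket expressions — a step the paper leaves implicit.
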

\begin{proof}
\
\begin{enumerate}
\item[\begin{small}1)\end{small}]Apply Lemma ~\ref{lem:Universal} for $\mathbb{Z}^N = A = A_n[m]$ and $B = (L_2 + M_3)[m]$, where $[m]$ means total degree $m$. 
\item[\begin{small}2)\end{small}] Apply Lemma ~\ref{lem:Universal} for $\mathbb{Z}^N = A_n[m]$, $A = L_2(A_n)[m]$, and $B = L_3(A_n)[m]$. $\mathbb{Z}^N / A = B_1[m]$ is freely spanned by cyclic words of length $m$, so it is free, and the lemma applies. 
\end{enumerate}
\end{proof}

\begin{rem}
This corollary is false for higher $B_i$. 
For instance, one can show that
$$
\dim (B_4(A_3(\Bbb Z))[2,2,2]\otimes \Bbb F_2)\ne \dim B_4(A_3(\Bbb F_2))[2,2,2].
$$
(see Subsection \ref{discr}). 
\end{rem}

\section{The Structure of $\bar{B}_1(A_n)$}

It will be useful to adapt the presentation of $A_n/M_3$ given in \cite{FS} to general base ring $R$.
This is accomplished by the following proposition. 


\begin{prop}\label{relaa}
The algebra $A_n/M_3$ is the algebra generated by 
$x_1, \ldots x_n$, and $u_{ij}$ for $i, j \in [1 \ldots n]$ and $i \neq j$, with the following relations:
\begin{enumerate}
\item[\begin{small}1)\end{small}] $u_{ij} = [x_i, x_j]$, and so $u_{ij} + u_{ji} = 0$;
\item[\begin{small}2)\end{small}] $[x_i, u_{jk}] = 0$ for all $i, j, k$;
\item[\begin{small}3)\end{small}] $u_{ij}$ commute with each other: $[u_{ij}, u_{kl}] = 0$;
\item[\begin{small}4)\end{small}] $u_{ij}u_{kl} = 0$ if any of the $i, j, k, l$ contain repetitions;
\item[\begin{small}5)\end{small}] $u_{ij}u_{kl}=-u_{ik}u_{jl}$ if $i, j, k, l$ are all distinct.
\end{enumerate}
\end{prop}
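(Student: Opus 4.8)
The plan is to prove this presentation by exhibiting a two-sided map in each direction. Let $\tilde{A}$ denote the abstract algebra generated by the $x_i$ and $u_{ij}$ subject to relations (1)--(5). First I would check that relations (1)--(5) actually hold in $A_n/M_3$, which gives a surjective homomorphism $\phi\colon \tilde{A}\to A_n/M_3$; most of these are direct consequences of Lemma~\ref{lem:FSLemma1} (which says $[A,M_3]\subset L_3$, so inside $A_n/M_3$ any double commutator $[x_i,[x_j,x_k]]$ vanishes and any product of two commutators is central and annihilates commutators), together with the Jacobi and Leibniz identities. For relation (5), $u_{ij}u_{kl}+u_{ik}u_{jl}=0$, I would expand $[x_i,x_j][x_k,x_l]$ in the free algebra and use the fact that modulo $M_3$ we may freely move the $x$'s past commutators and reorder, reducing the claim to a straightforward identity; this is precisely the computation in \cite{FS}, and as in Lemma~\ref{lem:FSLemma1} it involves only $\pm1$ coefficients so it is valid over any ring $R$.

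Next I would build a basis for the presented algebra $\tilde{A}$ and an inverse map. Using relations (1)--(3), every element of $\tilde{A}$ is an $R$-linear combination of monomials of the form $x_1^{a_1}\cdots x_n^{a_n}$ (the "commutative part" living in degree-$\le 1$ in the $u$'s) plus $x_1^{a_1}\cdots x_n^{a_n}\, u_{ij}u_{kl}$ and $x_1^{a_1}\cdots x_n^{a_n}\, u_{ij}$, since any monomial with $\ge 3$ commutator factors lies in $L_3\cdot A\subset M_3$ and hence is zero (the $u$'s are central modulo the degree-$\ge 2$ part, and $[A,M_3]\subset L_3$). Relations (4) and (5) then cut the quadratic-in-$u$ part down drastically: by (4) the indices $\{i,j,k,l\}$ must be four distinct integers, and by (5) together with the antisymmetry $u_{ij}=-u_{ji}$, the span of $\{u_{ij}u_{kl}\}$ over a fixed $4$-element index set is one-dimensional — it is exactly $\Lambda^4$ of the span of the generators, with $u_{ij}u_{kl}$ corresponding (up to sign) to $dx_i\wedge dx_j\wedge dx_k\wedge dx_l$. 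So a spanning set for $\tilde{A}$ is: commutative monomials $x^a$; monomials $x^a u_{ij}$ with $i<j$; and monomials $x^a u_{ij}u_{kl}$ indexed by $4$-subsets. The task is to show these are $R$-linearly independent, i.e. that $\phi$ is injective on this span.

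For injectivity I would compare with the known answer over $\mathbb{Q}$ and, more robustly, construct $\tilde{A}$ as (or map it onto) a concrete module: the Feigin--Shoikhet description identifies $A_n/M_3$ with $\Omega^0\oplus\Omega^1\oplus\Omega^{2,?}$-type data — concretely, $\mathcal{O}\oplus\Omega^1_{ex}$-ish plus the top part — but since we want this over an arbitrary $R$ it is cleanest to exhibit an explicit $R$-algebra $\mathcal{A}$ with underlying module $R[x_1,\dots,x_n]\oplus \big(R[x]\otimes V\big)\oplus\big(R[x]\otimes \Lambda^4 V\big)$ (where $V=R^n$, and the multiplication is dictated by the relations), verify directly that relations (1)--(5) hold in $\mathcal{A}$, and observe that the resulting map $\tilde{A}\to\mathcal{A}$ sends the spanning set above to an $R$-basis of $\mathcal{A}$. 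Composing, $\tilde{A}\twoheadrightarrow A_n/M_3\to\mathcal{A}$ with the composite hitting a basis forces all maps to be isomorphisms. The main obstacle I anticipate is precisely the bookkeeping in defining the multiplication on $\mathcal{A}$ and checking associativity and the quadratic relations (4),(5) are consistent over $\ZZ$ and $\mathbb{F}_p$ — i.e. verifying that the "wedge product" identification $u_{ij}u_{kl}\leftrightarrow dx_i\wedge dx_j\wedge dx_k\wedge dx_l$ is well-defined and that no unexpected torsion is introduced; everything else is a routine degree-by-degree reduction using $[A,M_3]\subset L_3$.
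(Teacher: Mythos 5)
The central step in your argument is not correct: you claim that in the presented algebra $\tilde A$ ``any monomial with $\ge 3$ commutator factors lies in $L_3\cdot A\subset M_3$ and hence is zero,'' and you accordingly build a model module with top piece $R[x]\otimes\Lambda^4 V$. Neither assertion is true. A product of three or more commutators, such as $[x_1,x_2][x_3,x_4][x_5,x_6]$, lies in $L_2\cdot L_2\cdot L_2$, which is not contained in $M_3=AL_3$, and indeed these elements do \emph{not} vanish in $A_n/M_3$: Proposition~\ref{anm3} exhibits $x_1^{k_1}\cdots x_n^{k_n}u_I$ for \emph{every} even-cardinality $I\subset\{1,\dots,n\}$ as a $\ZZ$-basis, so for $n\ge 6$ the element $u_{12}u_{34}u_{56}$ is a basis vector. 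The correct model is $\bigoplus_{r\ge 0}R[x]\otimes\Lambda^{2r}V$ (this is the content of Proposition~\ref{phimap} and the algebra $\widetilde\Omega^{ev}(V)$), not the truncation at $\Lambda^4$. Relations (4) and (5) only govern products of \emph{two} $u$'s; they give the wedge-type relations but do not kill higher products. The parenthetical ``any product of two commutators \ldots{} annihilates commutators'' is likewise false for the same reason.

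Beyond this error, the approach of establishing injectivity via a basis of a concrete model is much heavier than necessary, and over general $R$ a comparison with $\mathbb{Q}$ alone does not rule out torsion in $\tilde A_{\ZZ}$ a priori. The paper avoids both problems with a short two-sided argument that requires no basis at all: after checking (as you do) that relations (1)--(5) hold in $A_n/M_3$, giving a surjection $\tilde A\to A_n/M_3$, one shows directly from relations (1)--(5) that $[[a,b],c]=0$ for all $a,b,c\in\tilde A$. By the Jacobi identity and Leibniz rule this reduces to $[x_i,[x_j,c]]=0$ for monomials $c=x_{i_1}\cdots x_{i_m}$, and expanding by Leibniz (using (2), (3)) produces a sum of terms with a factor $u_{i,a_l}u_{j,a_k}$, which cancel in pairs $(k,l)\leftrightarrow(l,k)$ by (4) and (5). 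Since $\tilde A$ is generated by the $x_i$ (relation (1) makes the $u_{ij}$ redundant), this shows $A_n\to\tilde A$ kills $L_3$, hence kills $M_3=AL_3$, and therefore factors through a surjection $A_n/M_3\to\tilde A$ inverse to the first map on generators. No linear-independence or freeness argument is needed. I would recommend you abandon the model-building route for this proposition and use the $L_3(\tilde A)=0$ computation instead; if you do want the explicit basis of $A_n/M_3$, that is Proposition~\ref{anm3}, and it is proved separately by spanning plus a $\ZZ\hookrightarrow\mathbb{Q}$ injectivity argument.
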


\begin{proof}
Relation (1) is the definition of $u_{ij}$, 
and relations (2),(3) are obvious, so we will prove 
only relations (4),(5). 

4) We want to show that $[x,y][x,z]$ is in $M_3$. 
But, 
$$[x,y][x,z]=[[x,y]x,z]-[[x,y],z]x=[[x,yx],z]-[[x,y],z]x,
$$ 
which is in $M_3$. 

5) We want to show that $[x,y][z,t]+[x,z][y,t]$ is in  $M_3$. 
Modulo $M_3$, this is 
$$
[x,y[z,t]+z[y,t]]=[x,[yz,t]]+[x,z[y,t]-[y,t]z],
$$
which is obviously in $L_3$ (not just $M_3$).

Also, it is easy to show using relations (1)-(5) that 
in the quotient of the algebra $A_n$ by these relations, 
we have $[[a,b],c]=0$ for all $a,b,c$. Indeed, 
using the Jacobi identity and the Leibniz rule, it suffices to show 
that $[x_i,[x_j,c]]=0$. This is straightforward, substituting 
$c=x_{i_1}...x_{i_m}$. The proposition is proved. 
\end{proof}

For $I\subset \{1,\ldots, n\}$, of even cardinality $k$, $I=\lbrace{i_1,...,i_k\rbrace}$, 
$i_1<...<i_k$, define $u_I:=u_{i_1,i_2}\cdots u_{i_{k-1},i_k}$.

\begin{prop}\label{anm3} $A_n/M_3$ is a free abelian group with basis 
$x_1^{k_1}\cdots x_n^{k_n} u_I$, for $k_i\geq 0$, and $I$ as above.
\end{prop}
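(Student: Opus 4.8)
The plan is to use the presentation of $A_n/M_3$ from Proposition \ref{relaa} to show, first, that the proposed elements $x_1^{k_1}\cdots x_n^{k_n}u_I$ span $A_n/M_3$ over $\ZZ$, and second, that they are $\ZZ$-linearly independent. Spanning is the easier half: using relations (1)--(5), one rewrites an arbitrary word in the $x_i$'s by repeatedly moving commutators to the right. Concretely, since $[[a,b],c]=0$ in the quotient (as noted at the end of the proof of Proposition \ref{relaa}), every $u_{ij}$ is central, so any word can be sorted into the form (monomial in $x$'s)$\,\cdot\,$(product of $u$'s); relations (2) and (3) let one collect all the $x$'s on the left in increasing order, relation (1) antisymmetrizes the $u$'s, relation (4) kills products of $u$'s with repeated indices, and relation (5) straightens a product of $u$'s with distinct indices into the canonical form $u_I$ for a single subset $I$ of even size. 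A short bookkeeping argument shows products of more than one "block" $u_I u_J$ collapse: whenever $I\cap J\neq\emptyset$ we get $0$ by (4), and whenever they are disjoint, relation (5) rewrites $u_Iu_J$ as $\pm u_{I\cup J}$ up to reordering — so only a single $u_I$ survives.

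For linear independence — the main obstacle — I would construct an explicit $\ZZ$-linear "symbol map" from $A_n$ to a free abelian group with the $x_1^{k_1}\cdots x_n^{k_n}u_I$ as a basis, and check it annihilates the generators of $M_3$. The natural target is the tensor product of the polynomial ring $\ZZ[x_1,\ldots,x_n]$ with the exterior-type algebra on symbols $u_{ij}$ modulo relations (3)--(5); by Proposition \ref{relaa} (the analogue over $\CC$ is in \cite{FS}), this quotient algebra is spanned by exactly the claimed monomials, so it suffices to see there are no further $\ZZ$-linear relations among them in that abstract algebra. Equivalently, one exhibits a concrete model: map $x_i$ to multiplication by the variable $x_i$, and $u_{ij}$ to the $2$-form $dx_i\wedge dx_j$ living in $\Omega^{\bullet}(\ZZ^n)$ (with the caveat that only a single factor of $u$ is retained — i.e., one works in $\Omega^{ev}(\ZZ^n)$ as a module over $\ZZ[x_1,\dots,x_n]$, with the product of two $u$-symbols reinterpreted via relation (5) rather than as wedge). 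The key point is that the relations (4) and (5) are exactly the Plücker-type relations satisfied by the forms $dx_I$, and $\{x^k\,dx_I : I \text{ even}\}$ is manifestly a $\ZZ$-basis of $\bigoplus_k \Omega^{2k}(\ZZ^n)$.

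The cleanest route, which I would actually write, is to avoid re-deriving everything over $\ZZ$ and instead invoke the known result over $\CC$ together with an integrality/flatness argument: Feigin--Shoikhet prove that over $\CC$ (equivalently $\QQ$), $A_n/M_3$ has the stated monomials as a $\CC$-basis. Since the defining relations (1)--(5) have coefficients in $\ZZ$ (indeed in $\{0,\pm 1\}$), the integral algebra $A_n(\ZZ)/M_3$ surjects onto a free $\ZZ$-module $F$ spanned by the claimed monomials (spanning, proved above), and $F\otimes\QQ \cong (A_n(\QQ)/M_3)$ is free of the expected rank in each multidegree. A surjection of finitely generated abelian groups $A_n(\ZZ)/M_3 \twoheadrightarrow F$ in each multidegree that becomes an isomorphism after $\otimes\QQ$ must already be an isomorphism provided the source is torsion-free in that degree; and torsion-freeness of $A_n(\ZZ)/M_3$ follows because $A_n(\ZZ)$ is a free $\ZZ$-module and $M_3$ is a direct summand in each multidegree (its complement is spanned by the monomials above — this is precisely the content of the spanning step combined with the $\QQ$-dimension count). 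So the surjection is injective, proving the proposition.

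The step I expect to be genuinely delicate is not the abstract homological bookkeeping but verifying that relations (4) and (5) suffice to reduce \emph{every} product of $u$'s to a single $u_I$ with correct sign, including consistency of signs under the symmetric-group action permuting indices — i.e., checking that the straightening procedure is well-defined (confluent) and matches the sign conventions built into the definition $u_I = u_{i_1 i_2}\cdots u_{i_{k-1}i_k}$. I would handle this by noting that relations (1), (4), (5) present exactly the algebra whose degree-$k$ part is the span of $dx_I$, $|I|=k$, inside $\Omega(\ZZ^n)$, where confluence and signs are automatic from the exterior algebra structure.
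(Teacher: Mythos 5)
Your third paragraph — reduce to $\QQ$ and invoke Feigin--Shoikhet — is the route the paper actually takes, so the core idea is right, but the write-up has the logic running in the wrong direction in a way that, as written, makes the torsion-freeness step circular. Spanning means the free $\ZZ$-module $F$ on the claimed monomials \emph{surjects onto} $A_n(\ZZ)/M_3$, not the other way around; you wrote $A_n(\ZZ)/M_3 \twoheadrightarrow F$. And to apply your lemma (``surjection that is iso after $\otimes\QQ$ with torsion-free source is an iso''), the torsion-free source must be $F$ (obvious, since $F$ is free by construction), not $A_n(\ZZ)/M_3$ — for the latter you appeal to ``$M_3$ is a direct summand in each multidegree with complement spanned by the monomials,'' which is precisely what you are trying to prove. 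Corrected, the argument is clean: the kernel of $F\to A_n(\ZZ)/M_3$ is torsion (it dies after $\otimes\QQ$, by \cite{FS}) and is a subgroup of the free group $F$, hence zero. Even more directly, an integral relation $\sum c_\alpha m_\alpha \in M_3(\ZZ)$ is in particular a rational relation in $M_3(\QQ)$, which \cite{FS} rules out. The paper compresses all of this into ``enough to show it over $\QQ$.''

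Your first two paragraphs are not wrong, but they are substantially more work than the paper expends. The paper simply declares spanning ``clear'' (the straightening you outline is indeed the reason), and it does not construct a symbol map or verify confluence — that labor is entirely bypassed by the reduction to $\QQ$, where \cite{FS} has already done the dimension count. If you do want an integral ``model'' argument rather than a reduction to $\QQ$, note the paper later supplies exactly that tool: the map $\varphi$ of Proposition~\ref{coro} to $\Omega^{ev}(\ZZ^n)$ is your symbol map, and Proposition~\ref{phimap}(ii) is the statement that the degeneration to $\Omega(V)$ works over any base. But for the present proposition the short route via $\QQ$ is what the paper uses, and once the surjection direction and torsion-freeness source are fixed, your version of it is correct.
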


\begin{proof}
It is clear that the given elements are a spanning set. 
Let us show that they are a basis. It is enough to show it 
over $R=\Bbb Z$, hence over $R=\Bbb Q$, which is known from 
\cite{FS}.  
\end{proof} 

Here is a coordinate-free description of $A/M_3$. 
Let $V$ be a finitely generated free $R$-module, 
and $A=T_RV$ be the tensor algebra of $V$. 
Define the algebra $\widetilde\Omega(V)$ 
to be the quotient of the tensor algebra $T_R(V\oplus V)$ 
with generators $x_v,y_v$, $v\in V$ ($R$-linearly depending on $v$)
by the relations 
$$
[x_v,x_u]=y_vy_u, [x_v,y_u]=0, v,u\in V.
$$
Note that this implies that $y_vy_u=-y_uy_v$ and $y_v^2=0$. 
We have a decomposition $\widetilde{\Omega}(V)=\widetilde{\Omega}^{odd}(V)\oplus \widetilde{\Omega}^{ev}(V)$ 
into parts of odd and even degree with respect to the $y$-grading. 
Also we have a differential on $\widetilde{\Omega}(V)$ given by $dx_v=y_v$. 

Also, let $\Omega(V)$ be the algebra of differential forms 
on $V^*$, generated by $x_v$ and $y_v$ with relations 
$$
[x_v,x_u]=0,\ [x_v,y_u]=0,\ y_vy_u=-y_uy_v,\ y_v^2=0 
$$
and differential $dx_v=y_v$.
If $R$ contains $1/2$, we can define the Fedosov 
$*$-product on $\Omega(V)$ by 
$a*b=ab+\frac{1}{2}da\wedge db$. 
Denote $\Omega(V)$ equipped with this product by $\Omega(V)_*$.

\begin{prop}\label{phimap}
(i) If $R$ contains $1/2$ then there is a unique differential graded algebra homomorphism 
$\phi: \widetilde{\Omega}(V)\to \Omega(V)_*$ such that $\phi(x_v)=x_v$,
and it is an isomorphism.

(ii) Over any $R$, one has ${\rm gr}(\widetilde{\Omega}(V))=\Omega(V)$,
where the associated graded is taken with respect to the 
descending filtration by degree in the $y$-variables. 

(iii) There exists a unique algebra homomorphism $\zeta: A/M_3\to \widetilde{\Omega}^{ev}(V)$, 
such that $\phi(v)=x_v$. Moreover, $\phi$ is an isomorphism. 
\end{prop}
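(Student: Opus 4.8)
The plan is to treat the three parts in the logical order (i), (ii), (iii), since (iii) will be deduced from (i) together with the coordinate description of $A/M_3$ from Proposition \ref{relaa}. For part (i), I would first check that $\phi$ is well-defined: one must verify that the assignment $x_v\mapsto x_v$ (and hence $y_v=dx_v\mapsto dx_v$, forced by the requirement that $\phi$ be a morphism of differential graded algebras) respects the defining relations of $\widetilde\Omega(V)$ once the target is equipped with the Fedosov $*$-product. The only nontrivial relation is $[x_v,x_u]=y_vy_u$; here the commutator on the left is the $*$-commutator, and a direct computation gives $x_v*x_u-x_u*x_v = \tfrac12(dx_v\wedge dx_u - dx_u\wedge dx_v) = dx_v\wedge dx_u = y_vy_u$, using that $x_v,x_u$ commute in $\Omega(V)$ and $dx_v\wedge dx_u$ is antisymmetric. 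The relation $[x_v,y_u]=0$ is checked similarly since $da=0$ when $a=y_u$. Uniqueness is immediate because $\widetilde\Omega(V)$ is generated as a DGA by the $x_v$. To see $\phi$ is an isomorphism, I would construct the inverse explicitly, or better, argue that $\phi$ is a filtered map (for the $y$-degree filtration on both sides) inducing the identity on associated graded; then part (ii) finishes the job.

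For part (ii), the claim ${\rm gr}(\widetilde\Omega(V))=\Omega(V)$ with respect to the descending filtration by $y$-degree: passing to the associated graded kills the lower-order term in the relation $[x_v,x_u]=y_vy_u$ (since $y_vy_u$ has strictly higher $y$-degree than the commutator of two degree-zero elements... wait — rather, in the associated graded the relation $[x_v,x_u]=y_vy_u$ becomes $[x_v,x_u]=0$ because the two sides live in different filtration pieces and the bracket of $x$'s, having $y$-degree $0$, must equal the image of $y_vy_u$ which has $y$-degree $2$, forcing both to vanish in the graded object). The remaining relations $[x_v,y_u]=0$, $y_vy_u=-y_uy_v$, $y_v^2=0$ survive verbatim, and these are exactly the defining relations of $\Omega(V)$. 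One then needs the reverse inequality on sizes, i.e.\ that no further relations appear; this follows by exhibiting the monomial spanning set $x_{i_1}\cdots x_{i_r}\,y_{j_1}\cdots y_{j_s}$ and checking it maps onto a basis of $\Omega(V)$, which is standard.

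For part (iii), I would build $\zeta$ by hand from the presentation in Proposition \ref{relaa}: send $x_i\mapsto x_{x_i}$ and $u_{ij}\mapsto y_{x_i}y_{x_j}$ (note these land in $\widetilde\Omega^{ev}(V)$), and verify relations (1)--(5) of Proposition \ref{relaa} hold in $\widetilde\Omega^{ev}(V)$. Relations (1)--(3) are direct consequences of the defining relations $[x_v,x_u]=y_vy_u$, $[x_v,y_u]=0$, and the fact that the $y$'s anticommute (so the even products $y_vy_u$ commute with each other and with $x$'s); relations (4),(5) are the Plücker-type relations $y_iy_j\,y_ky_l=0$ when indices repeat (immediate from $y_i^2=0$) and $y_iy_j\,y_ky_l = -y_iy_k\,y_jy_l$ for distinct indices (just reordering anticommuting generators). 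Then, using Proposition \ref{anm3}, the monomials $x_1^{k_1}\cdots x_n^{k_n}u_I$ form a $\ZZ$-basis of $A/M_3$, and $\zeta$ sends them to $x_{x_1}^{k_1}\cdots x_{x_n}^{k_n}\,y_I$, which I claim form a basis of $\widetilde\Omega^{ev}(V)$ — this is exactly the content of part (ii) restricted to even $y$-degree. Hence $\zeta$ is an isomorphism. (I note the statement's ``$\phi$'' in (iii) is a typo for ``$\zeta$''.)

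The main obstacle is part (ii): proving that the associated graded algebra has no relations beyond those of $\Omega(V)$, equivalently that the PBW-type monomial basis is genuinely linearly independent in $\widetilde\Omega(V)$. Everything else is a finite check of relations, but the lower bound on the size of $\widetilde\Omega(V)$ requires either an explicit action of $\widetilde\Omega(V)$ on a module with a manifest basis, or a careful diamond-lemma / normal-form argument for the rewriting system $[x_v,x_u]\to y_vy_u$; once this is in hand, both (i) (via the filtered-isomorphism argument) and (iii) (via restriction to even degree and Proposition \ref{anm3}) follow formally.
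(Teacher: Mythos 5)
Your proposal takes a genuinely different route, and it contains a logical reversal that produces the gap you yourself flag as the ``main obstacle.'' You propose to deduce (i) from (ii) via a filtered-map argument, but you then acknowledge that (ii) --- the lower bound asserting the PBW-type monomials are linearly independent in $\widetilde\Omega(V)$ --- is exactly the hard step, and you leave it to an unperformed diamond-lemma or module-action argument. The paper's order is the opposite and sidesteps this entirely: it first proves (i) over a ring containing $1/2$ by a Hilbert-series comparison. The relation check gives surjectivity of $\phi$ onto $\Omega(V)_*$, while the ordered monomials visibly span $\widetilde\Omega(V)$, so its graded pieces have rank $\le$ those of $\Omega(V)$; a graded surjection with source rank bounded above by target rank must then be an isomorphism. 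Part (ii) then comes for free: the obvious surjection $\theta:\Omega(V)\to\mathrm{gr}\,\widetilde\Omega(V)$ is an isomorphism over $\mathbb{Q}$ by (i) and a rank count, and since $\Omega(V)_{\mathbb{Z}}$ is free this forces $\theta$ to be an isomorphism over $\mathbb{Z}$ and hence over any $R$. In other words, the PBW lower bound you were worried about is harvested from the Fedosov isomorphism over $\mathbb{Q}$, rather than proved from scratch; that is the trick your proposal misses.

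Your part (iii) --- verifying relations (1)--(5) of Proposition~\ref{relaa} under $x_i\mapsto x_{x_i}$, $u_{ij}\mapsto y_{x_i}y_{x_j}$, and then comparing bases via (ii) and Proposition~\ref{anm3} --- is correct and essentially equivalent to the paper's, which instead shows $[[a,b],c]=0$ in $\widetilde\Omega(V)$ to obtain a map out of $A/M_3$ and then compares Hilbert series. You are also right that the ``$\phi$'' in the statement of (iii) is a typo for ``$\zeta$.''
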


\begin{proof}
(i) It is easy to check that the relations $[x_v,x_u]=y_vy_u$
and $[x_v,y_u]=0$ are satisfied in $\Omega(V)_*$, so, there is a unique 
homomorphism $\phi$, which is surjective. 
Also, it is clear that the Hilbert series 
of $\widetilde{\Omega}(V)$ is dominated by the Hilbert series of $\Omega(V)$, 
so $\phi$ is injective, hence an isomorphism. 

(ii) It is easy to see that there is a surjective homomorphism 
$\theta: \Omega(V)\to {\rm gr}\widetilde{\Omega}(V)$. 
So it suffices to show that it is an isomorphism over $\Bbb Q$, 
which follows from (i) by comparison of Hilbert series.  

(iii) It is shown similarly to the proof of Proposition \ref{relaa} 
that in the algebra $\widetilde{\Omega}(V)$, 
one has $[[[a,b],c]=0$, so that $\zeta$ exists and is surjective. 
But it follows from (ii) and Proposition \ref{relaa} 
that the Hilbert series of the two algebras are the same, so
$\zeta$ is injective, i.e. an isomorphism.   
\end{proof}
 
Thus, $\widetilde{\Omega}(V)$ is a quantization of 
the DG algebra of differential forms $\Omega(V)$. 
Nevertheless, over a ring $R$ not containing $1/2$, in particular 
$R=\Bbb F_2$, they are not isomorphic even 
as graded $GL(V)$-modules. Namely, the degree 
2 component of $\widetilde{\Omega}(V)$ is $V\otimes V$ 
(the corresponding isomorphism is defined by $v\otimes u\mapsto x_vx_u$, 
$v,u\in V$), while for $\Omega(V)$ it is $S^2V\oplus \wedge^2V$, which is not the same thing as $V\otimes V$ 
in characteristic $2$. However, we will need to work with rings not containing $1/2$ (such as $\Bbb Z$ and $\Bbb F_2$). 
For this reason we will use a poor man's version of $\phi$, the map $\varphi$, introduced in the following proposition. 
Note that it is not an algebra map and is not $GL(n)$-invariant. 

\begin {prop}\label{coro} There exists a unique isomorphism of $R$-modules $\varphi:A_n/M_3\to\Omega^{ev}$ 
such that \[ \varphi(x_1^{k_1}\cdots x_n^{k_n}u_I) = x_1^{k_1}\cdots x_n^{k_n} dx_I, \]
where $dx_I = d_{x_{i_1}} \wedge d_{x_{i_2}} \wedge \ldots \wedge d_{x_{i_{k-1}}} \wedge d_{x_{i_k}}$. 
Moreover, one has $\varphi([a,x_i])=d\varphi(a)\wedge dx_i$. 
\end{prop}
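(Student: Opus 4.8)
The plan is to settle existence and uniqueness at once, and then to verify the intertwining identity by reducing to basis elements. Existence and uniqueness of $\varphi$ as an isomorphism of $R$-modules are immediate: by Proposition~\ref{anm3} the monomials $x_1^{k_1}\cdots x_n^{k_n}u_I$ form an $R$-basis of $A_n/M_3$, the monomials $x_1^{k_1}\cdots x_n^{k_n}\,dx_I$ plainly form an $R$-basis of $\Omega^{ev}$, and the prescribed rule is a bijection between them; an $R$-linear map is determined by its values on a basis. (The left-hand side $[a,x_i]$ is well defined on $A_n/M_3$ because $[A,M_3]\subset L_3\subset M_3$ by Lemma~\ref{lem:FSLemma1}.)

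For the identity, both sides are $R$-linear in $a$, so it suffices to take $a=w\,u_I$ with $w=x_1^{k_1}\cdots x_n^{k_n}$. Modulo $M_3$ the elements $u_{jk}$ are central (relations (2),(3) of Proposition~\ref{relaa}), so $[w\,u_I,x_i]=u_I[w,x_i]$; and since $[\,\cdot\,,x_i]$ is a derivation of $A_n$ and $[x_j,x_i]=u_{ji}$ is central modulo $M_3$, the Leibniz rule gives $[w,x_i]\equiv\sum_j k_j\,(w/x_j)\,u_{ji}\pmod{M_3}$, where $k_j$ is the exponent of $x_j$ in $w$, $w/x_j$ denotes $w$ with that exponent lowered by one, and the $j=i$ term drops out. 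Hence $[a,x_i]\equiv\sum_j k_j\,(w/x_j)\,u_{ji}u_I$. On the other side, $d\varphi(a)\wedge dx_i=(dw\wedge dx_I)\wedge dx_i=\sum_j k_j\,(w/x_j)\,dx_j\wedge dx_I\wedge dx_i$, and since $|I|$ is even we may move $dx_i$ past $dx_I$. Thus the identity reduces to the claim that $\varphi\bigl((w/x_j)\,u_{ji}u_I\bigr)=(w/x_j)\,dx_j\wedge dx_i\wedge dx_I$ for every $j$.

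This is an instance of the general fact that for any index sequence $a_1,\dots,a_{2m}$ one has $\varphi\bigl(w\cdot u_{a_1a_2}u_{a_3a_4}\cdots u_{a_{2m-1}a_{2m}}\bigr)=w\cdot dx_{a_1}\wedge dx_{a_2}\wedge\cdots\wedge dx_{a_{2m}}$, where the left-hand side is read after rewriting the product of $u$'s in the standard basis via relations (1)--(5); equivalently, $\varphi$ carries the commutative subalgebra of $A_n/M_3$ spanned by the $u_I$ isomorphically, as an algebra, onto the subalgebra of $\Omega^{ev}$ spanned by the $dx_I$. To prove it, note that the standardization of $u_{a_1a_2}\cdots u_{a_{2m-1}a_{2m}}$ uses only adjacent transpositions of the index string: a transposition inside a block is relation (1), with sign $-1$; a transposition across a block boundary is relation (5) (applicable once the four indices in question are distinct), also with sign $-1$; consecutive blocks may be interchanged freely by relations (2),(3); and any repetition among the indices forces the monomial to vanish, via $u_{aa}=0$ and relation (4). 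Each of these moves is mirrored verbatim, and with the same sign, by the corresponding operation on the wedge monomial $dx_{a_1}\wedge\cdots\wedge dx_{a_{2m}}$ (antisymmetry of $\wedge$, commutativity of even-degree forms, vanishing of a wedge containing a repeated factor); since adjacent transpositions generate $S_{2m}$, the two sides stay equal throughout the reduction, which proves the claim and hence the proposition. The one genuinely delicate point is this last bookkeeping---confirming that relation (5) encodes exactly the exterior-algebra transposition sign, and that the degenerate cases with a repeated index collapse compatibly on the two sides; everything else is formal.
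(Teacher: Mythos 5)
Your proof is correct and follows the same route as the paper, which simply cites Proposition~\ref{anm3} for the first claim and says the identity ``follows by an easy direct computation''; you have carried out that direct computation carefully, reducing to basis elements $w\,u_I$ via bilinearity and centrality of the $u$'s, and verifying that the reduction of a general $u$-monomial to standard form via relations~(1),~(4),~(5) matches sign-for-sign the exterior-algebra reduction of the corresponding wedge monomial. The only minor stylistic remark is that the full claim that $\varphi$ is an algebra isomorphism on the $u$-subalgebra is more than is needed---the single identity $\varphi(u_{ji}u_I)=dx_j\wedge dx_i\wedge dx_I$ suffices---but the argument you give establishes it and is sound.
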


\begin{proof} 
The first statement is a direct consequence of Proposition \ref{anm3}, and the second one follows by an easy 
direct computation. 
\end{proof} 

\begin{thm}\label{thm:Thm1}
The map $\varphi$ induces an isomorphism $\varphi: L_2(A/M_3) \rightarrow \Omega^{ev}_{ex}$.
\end{thm}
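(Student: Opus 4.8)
The plan is to transport the identity $\varphi([a,x_i]) = d\varphi(a)\wedge dx_i$ from Proposition \ref{coro} through a spanning-set argument. First I would observe that $L_2(A/M_3)$ is spanned by elements of the form $[a,x_i]$ with $a\in A_n/M_3$: indeed $L_2 = [A,A]$ is spanned by commutators $[a,b]$, and writing $b$ as an $R$-linear combination of monomials $x_{j_1}\cdots x_{j_m}$ one expands $[a,x_{j_1}\cdots x_{j_m}]$ via the Leibniz rule into a sum of terms each of the shape $(\text{something})[a',x_{j_\ell}](\text{something})$; but modulo $M_3$ the ideal generated by $L_3$ vanishes, and $[[\cdot,\cdot],\cdot]=0$ holds in $A_n/M_3$ (this is exactly the fact established at the end of the proof of Proposition \ref{relaa}), so every such product collapses to a single $[a'',x_i]$. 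Hence $\varphi(L_2(A/M_3))$ is spanned by the elements $d\varphi(a)\wedge dx_i = d\omega\wedge dx_i$ as $\omega$ ranges over $\Omega^{ev}$ and $i$ over $\{1,\dots,n\}$, and since $d(\omega\, dx_i) = \pm\, d\omega\wedge dx_i$, every such element is exact. This shows $\varphi(L_2(A/M_3))\subseteq \Omega^{ev}_{ex}$.

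Next I would prove the reverse inclusion. Every element of $\Omega^{ev}_{ex}$ is $d\eta$ for some $\eta\in\Omega^{odd}$, and $\Omega^{odd}$ is spanned over $R$ by monomials $x_1^{k_1}\cdots x_n^{k_n}\, dx_J$ with $|J|$ odd. Such a monomial can be written as $\omega\wedge dx_i$ for a suitable even form $\omega$ and index $i\in J$ (up to sign), so $d\eta$ is an $R$-linear combination of terms $d\omega\wedge dx_i = \pm\varphi([\varphi^{-1}(\omega),x_i])$, which lies in $\varphi(L_2(A/M_3))$. Thus $\varphi$ maps $L_2(A/M_3)$ onto $\Omega^{ev}_{ex}$, and since $\varphi$ is already known to be an isomorphism of $R$-modules $A_n/M_3\to\Omega^{ev}$ by Proposition \ref{coro}, its restriction to $L_2(A/M_3)$ is an isomorphism onto its image $\Omega^{ev}_{ex}$.

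The step I expect to be the main obstacle is the first one: cleanly justifying that $L_2(A/M_3)$ is spanned by elements $[a,x_i]$ with a single generator in the second slot. One has to be a little careful, because $L_2(A_n) = [A_n,A_n]$ is by definition the $R$-span of all commutators, and passing to the quotient $A/M_3$ one must argue that the images of $[a,b]$ for general $b$ already lie in the span of the $[a,x_i]$. The clean way is to invoke, inside $A/M_3$, the two facts used in Proposition \ref{relaa}: the Leibniz identity $[a,bc] = [a,b]c + b[a,c]$ and the vanishing of all triple commutators $[[x,y],z]=0$, so that $b[a,c] \equiv [a,c]b$ modulo nothing (triple commutators vanish, not just modulo $M_3$) — hence $[a,bc] \equiv [a,b]c + [a,c]b$, and induction on the length of a monomial $b$ reduces $[a,b]$ to a combination of $[a,x_i]$ times monomials, which in turn, by the same triple-commutator vanishing, can be absorbed so that the whole thing is again of the form $[a',x_i]$. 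Once this reduction is in hand, everything else is the routine bookkeeping with $d$ and $\wedge$ sketched above, using $\varphi([a,x_i]) = d\varphi(a)\wedge dx_i$ as the single engine.
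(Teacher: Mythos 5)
Your overall strategy matches the paper's: reduce to showing that $L_2$ is spanned by the elements $[a,x_i]$, then apply $\varphi([a,x_i])=d\varphi(a)\wedge dx_i$ for one inclusion and exhibit explicit preimages of $df\wedge dx_{i_1}\wedge\cdots\wedge dx_{i_{2r+1}}$ for the other. The second and third paragraphs of your argument are fine and are essentially what the paper does.

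The gap is in your justification of the spanning statement. You expand $[a,x_{j_1}\cdots x_{j_m}]$ by Leibniz and, after moving the central commutator to one side, assert that each term $[a,x_{j_\ell}]\cdot(\text{monomial})$ ``collapses to a single $[a'',x_i]$.'' That is false, and the vanishing of triple commutators alone does not give it: every attempt to ``absorb'' the trailing monomial into the commutator via $[am,x_i]=a[m,x_i]+[a,x_i]m$ together with centrality just returns a tautology. Concretely, take $n=3$ with generators $x,y,z$ and consider $[x,y]z\in A_3/M_3$, which arises as a Leibniz term of $[x,yz]$. In degree $(1,1,1)$ the group $L_2(A_3/M_3)$ is spanned by $[x,yz]=z\,u_{12}+y\,u_{13}$, $[y,xz]=-z\,u_{12}+x\,u_{23}$, and $[z,xy]=-y\,u_{13}-x\,u_{23}$, which sum to zero; the element $z\,u_{12}=[x,y]z$ does not lie in this rank-two span (equivalently, $\varphi([x,y]z)=z\,dx\wedge dy$ is not exact over $\ZZ$). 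So $[x,y]z\notin L_2(A_3/M_3)$, and in particular it is not any $[a'',x_i]$. Only the \emph{sum} of the Leibniz terms lies in $\sum_i[A,x_i]$, not the individual summands.

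The paper avoids this by proving the spanning statement directly in $A_n$ (Lemma~\ref{degg1}): the identity
$[a,bc]+[b,ca]+[c,ab]=0$, valid over any ring, lets one peel off a generator from the shorter argument and induct on its length, showing $L_2(A_n)=\sum_i[A_n,x_i]$ without ever needing to manipulate single Leibniz terms inside $A/M_3$. Replace your first paragraph by this argument (or simply cite Lemma~\ref{degg1}) and the rest of your proof goes through.
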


\begin{proof} We will need the following lemma. 

\begin{lem}\label{degg1} We have $L_2(A_n)=\sum_{i=1}^n [A_n,x_i]$.
\end{lem}

\begin{proof}
This follows from the identity $[a,bc]+[b,ca]+[c,ab]=0$.
\end{proof}

By this lemma, $L_2(A_n)$  is spanned by $[a,x_i]$, $a\in A_n$.
But by Proposition \ref{coro},  
$\varphi([a,x_i])=d\varphi(a)\wedge dx_i$.
Thus, $\varphi(L_2)$ is contained in $\Omega^{ev}_{ex}$.
On the other hand, $\Omega^{ev}_{ex}$ is spanned by elements of the form
$\omega=df\wedge dx_{i_1}\wedge...\wedge dx_{i_{2r+1}}$. So if $a=\varphi^{-1}(fdx_{i_1}\wedge...\wedge dx_{i_{2r}})$,
then $\varphi([a,x_{i_{2r+1}}])=\omega$, implying the opposite inclusion.
\end{proof} 

\begin{cor}\label{deco} 
Over any base ring $R$, one has 
$$
\bar{B}_1(A_n) =\Omega^{ev}(R^n)/\Omega^{ev}_{ex}(R^n).
$$
In particular, for $R=\Bbb Z$, one has 
$$
\bar{B}_1(A_n)=H^{ev,+}(\Omega(\ZZ^n))\oplus\Omega^{ev}(\ZZ^n)/\Omega^{ev,+}_{cl}(\ZZ^n),$$
where $H^{ev,+}:=H^2\oplus H^4\oplus...$ is the even De Rham cohomology of $\Bbb Z^n$. 
\end{cor}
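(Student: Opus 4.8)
The plan is to reduce the statement to Proposition \ref{coro} and Theorem \ref{thm:Thm1} together with a short piece of homological algebra over $\mathbb{Z}$. First I would unwind the definition $\bar B_1(A_n) = A_n/(L_2+M_3)$. Since $M_3$ is a two-sided ideal contained in the denominator, this equals $(A_n/M_3)/\overline{L_2}$, where $\overline{L_2}$ is the image of $L_2(A_n)$ under the projection $\pi\colon A_n\to A_n/M_3$. As $\pi$ is a surjective algebra homomorphism and $L_2$ is spanned by commutators $[a,b]$, its image is spanned by the $[\pi(a),\pi(b)]$, so $\overline{L_2}=L_2(A_n/M_3)$. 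Now Proposition \ref{coro} supplies an isomorphism of graded $R$-modules $\varphi\colon A_n/M_3\xrightarrow{\ \sim\ }\Omega^{ev}$, and Theorem \ref{thm:Thm1} says this same $\varphi$ restricts to an isomorphism $L_2(A_n/M_3)\xrightarrow{\ \sim\ }\Omega^{ev}_{ex}$. Hence $\varphi$ descends to an isomorphism $\bar B_1(A_n)\cong \Omega^{ev}(R^n)/\Omega^{ev}_{ex}(R^n)$, which is the first assertion, valid over any $R$.

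For the second assertion, specialize to $R=\mathbb{Z}$ and study $\Omega^{ev}/\Omega^{ev}_{ex}$. From $d^2=0$ and the fact that $d$ raises rank by one we get $\Omega^{ev}_{ex}=d\Omega^{odd}\subseteq \Omega^{ev,+}_{cl}\subseteq \Omega^{ev}$ (note that only positive-rank forms occur in $d\Omega^{odd}$, which is why $\Omega^0$ is untouched). Taking the first two terms on the even positive part identifies $\Omega^{ev,+}_{cl}/\Omega^{ev}_{ex}$ with $\bigoplus_{k\ge 1}H^{2k}(\Omega(\mathbb{Z}^n))=H^{ev,+}(\Omega(\mathbb{Z}^n))$, so we obtain a short exact sequence of graded abelian groups
$$0\to H^{ev,+}(\Omega(\mathbb{Z}^n))\to \Omega^{ev}/\Omega^{ev}_{ex}\to \Omega^{ev}/\Omega^{ev,+}_{cl}\to 0 .$$

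The remaining point, which I expect to be the only nonformal step, is that this sequence splits, non-canonically, as graded abelian groups. I would prove this by showing the quotient $\Omega^{ev}/\Omega^{ev,+}_{cl}$ is free abelian with a homogeneous basis, so that a graded section can be built by lifting basis elements. Writing $\Omega^{ev}=\Omega^0\oplus\Omega^{ev,+}$ and using $\Omega^{ev,+}_{cl}\subseteq\Omega^{ev,+}$, this quotient is $\Omega^0\oplus\bigl(\Omega^{ev,+}/\Omega^{ev,+}_{cl}\bigr)$; the first summand is the free module $\mathbb{Z}[x_1,\dots,x_n]$, and the second is, via $d$, isomorphic to the graded subgroup $d(\Omega^{ev,+})\subseteq\Omega(\mathbb{Z}^n)$. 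Since $\Omega(\mathbb{Z}^n)$ is free abelian and of finite rank in each graded component, every graded subgroup is again free with a homogeneous basis; hence $\Omega^{ev}/\Omega^{ev,+}_{cl}$ is graded-free, the sequence splits in the graded category, and combining with the first assertion gives $\bar B_1(A_n)\cong H^{ev,+}(\Omega(\mathbb{Z}^n))\oplus \Omega^{ev}(\mathbb{Z}^n)/\Omega^{ev,+}_{cl}(\mathbb{Z}^n)$, as claimed.

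All the substantive content is carried by Proposition \ref{coro} and Theorem \ref{thm:Thm1}; the things to be careful about are keeping precise track of which forms have positive rank (so that $\Omega^0$ is neither absorbed into the exact forms nor into the "$_{cl}$" terms) and checking that the splitting can be chosen compatibly with the $\mathbb{Z}^n$-grading.
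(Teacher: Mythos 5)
Your proof is correct and takes essentially the same route as the paper: reduce $\bar B_1(A_n)$ to $B_1(A_n/M_3)$, invoke Proposition \ref{coro} and Theorem \ref{thm:Thm1} to identify this with $\Omega^{ev}/\Omega^{ev}_{ex}$, then split the resulting short exact sequence via freeness of the quotient. The only (harmless) variation is the freeness argument: the paper shows $\Omega^{ev,+}_{cl}$ is saturated (if $m\omega$ is closed then $\omega$ is closed), hence the quotient is torsion-free and free in each graded component, whereas you identify $\Omega^{ev,+}/\Omega^{ev,+}_{cl}$ with $d(\Omega^{ev,+})$, a subgroup of the free group $\Omega$, and conclude freeness that way; both arguments are valid and give the graded splitting.
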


\begin{proof}
It is clear that 
for any algebra $A$, 
$\bar{B}_1(A) \cong B_1(A/M_3)$.
Therefore, by Theorem \ref{thm:Thm1},
$\bar{B}_1(A_n) =\Omega^{ev}/\Omega^{ev}_{ex}$.
So we have a short exact sequence
$$
0\to H^{ev,+}(\Omega(\ZZ^n))\to \bar{B}_1(A_n) \to \Omega^{ev}(\ZZ^n)/\Omega^{ev,+}_{cl}(\ZZ^n)\to 0,
$$
But it is easy to see that the quotient is a free group
(indeed, if $m\omega$ is a closed form for an integer $m$, then $\omega$ is closed as well). This implies 
the statement.
\end{proof}

Now, the Poincare lemma over $\mathbb{Q}$ implies that $H^{ev,+}(\Omega(\ZZ^n))$ is torsion.  
Thus, we have

\begin{thm} \label{barB1thm} The torsion in $\bar B_1(A_n\ZZ))$ is given by the equality
$$
{\mathrm{tor}}\bar{B}_1(A_n(\ZZ))= H^{ev,+}(\Omega(\ZZ^n)) = H^2(\Omega(\ZZ^n)) \oplus H^4(\Omega(\ZZ^n)) \oplus \ldots = \bigoplus^{\lfloor\frac{n}{2}\rfloor}_{k=1} H^{2k} (\Omega(\ZZ^n)).$$
\end{thm}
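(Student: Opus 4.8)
The plan is to deduce the statement almost directly from Corollary \ref{deco}, which already provides the splitting
$$\bar{B}_1(A_n(\ZZ)) = H^{ev,+}(\Omega(\ZZ^n)) \oplus \Omega^{ev}(\ZZ^n)/\Omega^{ev,+}_{cl}(\ZZ^n).$$
Since the torsion subgroup of a direct sum of abelian groups is the direct sum of the torsion subgroups, it is enough to verify that the second summand is torsion-free while the first is entirely torsion.

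For the second summand, I would reuse the observation from the proof of Corollary \ref{deco}: if $m\omega$ is a closed form for some nonzero integer $m$, then $\omega$ is closed as well, because $d$ is additive (hence commutes with multiplication by $m$) and multiplication by $m$ is injective on the free abelian group $\Omega^{ev}(\ZZ^n)$. Thus $\Omega^{ev,+}_{cl}(\ZZ^n)$ is a saturated subgroup of $\Omega^{ev}(\ZZ^n)$, so $\Omega^{ev}(\ZZ^n)/\Omega^{ev,+}_{cl}(\ZZ^n)$ is free and contributes nothing to the torsion.

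For the first summand, I would show that $H^k(\Omega(\ZZ^n))$ is a torsion group for every $k>0$. The integral De Rham complex is termwise free, so tensoring with the flat $\ZZ$-module $\mathbb{Q}$ commutes with taking cohomology: $H^k(\Omega(\ZZ^n))\otimes\mathbb{Q} \cong H^k(\Omega(\ZZ^n)\otimes\mathbb{Q}) = H^k(\Omega(\mathbb{Q}^n))$, which vanishes for $k>0$ by the Poincar\'e lemma over $\mathbb{Q}$. Hence $H^{ev,+}(\Omega(\ZZ^n))$ is torsion, and combining this with the previous paragraph gives $\mathrm{tor}\,\bar{B}_1(A_n(\ZZ)) = H^{ev,+}(\Omega(\ZZ^n))$.

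It then remains only to rewrite $H^{ev,+} = H^2\oplus H^4\oplus\cdots$ and to note that $H^{2k}(\Omega(\ZZ^n))=0$ once $2k>n$, since there are no nonzero forms of rank exceeding $n$; this truncates the sum at $k=\lfloor n/2\rfloor$ and yields the displayed formula. I do not anticipate a genuine obstacle: all the substantive content is already in Theorem \ref{thm:Thm1} and Corollary \ref{deco}, and the only point demanding a little care is the interchange of cohomology with $-\otimes\mathbb{Q}$, which is immediate from flatness (equivalently, from the complex being termwise free). If one wished to make the torsion explicit one could additionally invoke Corollary \ref{Hev-description}, but that is not needed for the statement as phrased.
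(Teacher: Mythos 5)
Your proof is correct and follows essentially the same route as the paper: it reads off the splitting from Corollary~\ref{deco}, observes that the second summand is free by the saturation of closed forms, and deduces that the first summand is all torsion from the Poincar\'e lemma over $\mathbb{Q}$. The only additions are the explicit flatness justification for $H^k(\Omega(\ZZ^n))\otimes\mathbb{Q}\cong H^k(\Omega(\mathbb{Q}^n))$ and the truncation at $\lfloor n/2\rfloor$, both of which the paper leaves implicit.
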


Combining with Corollary \ref{Hev-description}, we have a complete description of the torsion part of $\bar{B}_1(A_n(\ZZ))$.

\begin{prop}\label{b1an} If $m_1,...,m_n>0$ then the torsion in $\bar B_1(A_n(\Bbb Z))[m_1,...,m_n]$ is isomorphic to 
$(\Bbb Z/{\rm gcd}(m_1,...,m_n))^{2^{n-2}}$. 
\end{prop}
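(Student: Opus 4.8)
The plan is to derive this directly from Theorem~\ref{barB1thm} together with the cohomology computation of Corollary~\ref{Hev-description}, the only remaining work being a binomial-coefficient identity. First I would invoke Theorem~\ref{barB1thm}, which gives an isomorphism of $\mathbb{Z}^n$-graded abelian groups
$$
{\rm tor}\,\bar{B}_1(A_n(\mathbb{Z})) = \bigoplus_{k=1}^{\lfloor n/2\rfloor} H^{2k}(\Omega(\mathbb{Z}^n)).
$$
Since each $H^{2k}(\Omega(\mathbb{Z}^n))$ carries its natural $\mathbb{Z}^n$-grading and a direct sum of graded groups is graded degreewise, passing to a fixed multidegree $[m_1,\ldots,m_n]$ yields
$$
{\rm tor}\,\bar{B}_1(A_n(\mathbb{Z}))[m_1,\ldots,m_n] = \bigoplus_{k=1}^{\lfloor n/2\rfloor} H^{2k}(\Omega(\mathbb{Z}^n))[m_1,\ldots,m_n].
$$

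Next, using the hypothesis that all $m_i>0$, I would apply Corollary~\ref{Hev-description} with $i=2k$ to each summand, which gives $H^{2k}(\Omega(\mathbb{Z}^n))[m_1,\ldots,m_n]\cong (\mathbb{Z}/d)^{\binom{n-1}{2k-1}}$, where $d:={\rm gcd}(m_1,\ldots,m_n)$. Summing over $k$, we obtain
$$
{\rm tor}\,\bar{B}_1(A_n(\mathbb{Z}))[m_1,\ldots,m_n]\cong (\mathbb{Z}/d)^{N},\qquad N=\sum_{k=1}^{\lfloor n/2\rfloor}\binom{n-1}{2k-1}.
$$

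It then remains only to check that $N=2^{n-2}$. The exponents $2k-1$ for $1\le k\le \lfloor n/2\rfloor$ are exactly the odd integers in $\{0,1,\ldots,n-1\}$, so $N=\sum_{j\ \mathrm{odd}}\binom{n-1}{j}=\tfrac12\bigl((1+1)^{n-1}-(1-1)^{n-1}\bigr)=2^{n-2}$ (here one uses $n\ge 2$, which is implicit in the statement since $2^{n-2}$ must be a nonnegative integer). Substituting back gives the claimed isomorphism with $(\mathbb{Z}/{\rm gcd}(m_1,\ldots,m_n))^{2^{n-2}}$. I do not anticipate a genuine obstacle here: the substantive input has already been proved in Theorem~\ref{barB1thm} and Corollary~\ref{Hev-description}, and what is left is bookkeeping with the multigradings in the first step plus the standard identity $\sum_{j\ \mathrm{odd}}\binom{n-1}{j}=2^{n-2}$. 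The only points deserving a moment's care are the compatibility of the $\mathbb{Z}^n$-gradings when restricting to a fixed multidegree, and the small cases (e.g. for $n=2$ the formula correctly specializes to $\mathbb{Z}/{\rm gcd}(m_1,m_2)$, matching $H^2$ of the plane).
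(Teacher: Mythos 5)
Your argument is exactly the one the paper intends: the paper simply states the proposition immediately after the sentence ``Combining with Corollary~\ref{Hev-description}, we have a complete description...'', leaving the details implicit, and what you have written out (passing to a fixed multidegree in Theorem~\ref{barB1thm}, applying Corollary~\ref{Hev-description} to each $H^{2k}$, and summing the odd binomial coefficients to get $2^{n-2}$) is precisely that combination. The proof is correct and takes the same route.
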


\begin{cor}\label{b1a2}
For $q,r>0$, the torsion in $\bar B_1(A_2(\mathbb{Z}))[q,r]$ is isomorphic to $\ZZ/{\rm gcd}(q,r)$ and 
spanned by the element $x^{q-1}y^{r-1}[x,y]$. 
\end{cor}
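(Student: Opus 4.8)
The plan is to derive Corollary \ref{b1a2} directly from Proposition \ref{b1an} by specializing $n=2$, and then to identify the generator of the resulting cyclic group explicitly. For $n=2$ we have $2^{n-2}=1$, so Proposition \ref{b1an} gives that the torsion in $\bar B_1(A_2(\mathbb{Z}))[q,r]$ is $\mathbb{Z}/\gcd(q,r)$ for $q,r>0$; this is immediate and requires no new work. The substantive part is to exhibit a concrete generator, namely the class of $x^{q-1}y^{r-1}[x,y]$.

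To pin down the generator, I would work through the isomorphism $\varphi:A_2/M_3\to\Omega^{ev}(\mathbb{Z}^2)$ of Proposition \ref{coro} together with Corollary \ref{deco}, which identifies $\bar B_1(A_2)$ with $\Omega^{ev}(\mathbb{Z}^2)/\Omega^{ev}_{ex}(\mathbb{Z}^2)$. In two variables $\Omega^{ev}$ consists of $\Omega^0\oplus\Omega^2$, and in the multidegree $[q,r]$ with $q,r>0$ the degree-$0$ part $\Omega^0[q,r]$ is spanned by the monomial $x^q y^r$ (a monomial form, not a commutator word), while $\Omega^2[q,r]$ is spanned by $x^{q-1}y^{r-1}\,dx\wedge dy$. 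Under $\varphi$, the element $x^{q-1}y^{r-1}[x,y]\in A_2/M_3$ maps to $x^{q-1}y^{r-1}u_{12}\mapsto x^{q-1}y^{r-1}\,dx\wedge dy$. Since the torsion of $\bar B_1(A_2)[q,r]$ equals $H^2(\Omega(\mathbb{Z}^2))[q,r]$ by Theorem \ref{barB1thm}, I need to check that the class of $x^{q-1}y^{r-1}\,dx\wedge dy$ generates $H^2(\Omega(\mathbb{Z}^2))[q,r]\cong\mathbb{Z}/\gcd(q,r)$. This is the computation underlying Lemma \ref{omega1} and Corollary \ref{Hev-description} (or \ref{kunnethcor}): every element of $\Omega^2[q,r]$ is automatically closed (it is top-degree), and the image of $d:\Omega^1[q,r]\to\Omega^2[q,r]$ is generated by $d(x^q y^r\,dx)=r\,x^q y^{r-1}\,dx\wedge dy$ (giving a relation with coefficient $\pm r$ after rescaling) and $d(x^q y^r\,dy)=q\,x^{q-1}y^r\,dx\wedge dy$ (coefficient $\pm q$); more carefully, one writes a general $1$-form $a\,x^{q}y^{r-1}dx + b\,x^{q-1}y^{r}dy$ of multidegree $[q,r]$, computes its differential to be $(br-aq)x^{q-1}y^{r-1}dx\wedge dy$, and concludes that the cokernel is $\mathbb{Z}/\gcd(q,r)$ with generator the class of $x^{q-1}y^{r-1}dx\wedge dy$.

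Finally I would translate back: since $\varphi$ is an isomorphism of $R$-modules compatible with the identification $\bar B_1(A_2)=\Omega^{ev}/\Omega^{ev}_{ex}$ and with the De Rham differential (via $\varphi([a,x_i])=d\varphi(a)\wedge dx_i$ from Proposition \ref{coro}), the preimage $x^{q-1}y^{r-1}[x,y]$ of the generator $x^{q-1}y^{r-1}\,dx\wedge dy$ of $H^2(\Omega(\mathbb{Z}^2))[q,r]$ is a generator of ${\rm tor}\,\bar B_1(A_2(\mathbb{Z}))[q,r]$. One should also note that this element indeed represents a torsion class and not something that vanishes or has infinite order — but that is exactly what the cohomology computation shows, since $\gcd(q,r)\geq 1$ and the class is nonzero in $\mathbb{Z}/\gcd(q,r)$ precisely when $\gcd(q,r)>1$; when $\gcd(q,r)=1$ the torsion group is trivial and the statement is vacuous.

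The main obstacle, such as it is, is bookkeeping rather than conceptual: one must be careful that $x^{q-1}y^{r-1}[x,y]$ lands in $L_2(A_2/M_3)$'s complement correctly — i.e., that its image under $\varphi$ lies in $\Omega^2$ (which is not exact in general) rather than being killed — and that the multidegree conventions for the generators of $\Omega^1[q,r]$ are handled correctly so that the cokernel of $d$ comes out as $\mathbb{Z}/\gcd(q,r)$ with the claimed generator. Everything else is a direct specialization of results already proved in the excerpt (Proposition \ref{b1an}, Corollary \ref{deco}, Corollary \ref{Hev-description}).
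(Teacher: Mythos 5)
Your approach is the intended one: the isomorphism type $\ZZ/\gcd(q,r)$ falls out of Proposition~\ref{b1an} at $n=2$ (exponent $2^{n-2}=1$), and the generator is identified by pushing $x^{q-1}y^{r-1}[x,y]$ through $\varphi$ to $x^{q-1}y^{r-1}\,dx\wedge dy$ and checking that this $2$-form generates $H^2(\Omega(\ZZ^2))[q,r]$; the paper omits a separate proof precisely because the argument is this specialization.

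One bookkeeping slip worth fixing: with the conventions of Proposition~\ref{coro} the form $dx_i$ carries $x_i$-degree $1$ (since $\varphi(u_{ij})=dx_i\wedge dx_j$ and $u_{ij}=[x_i,x_j]$ has degree $1$ in each of $x_i,x_j$), so a $1$-form of multidegree $(q,r)$ is $a\,x^{q-1}y^{r}\,dx + b\,x^{q}y^{r-1}\,dy$, not $a\,x^{q}y^{r-1}\,dx + b\,x^{q-1}y^{r}\,dy$ as you wrote. With the corrected exponents the differential is $(bq-ar)\,x^{q-1}y^{r-1}\,dx\wedge dy$, so the image of $d$ in $\Omega^2[q,r]\cong\ZZ$ is $\gcd(q,r)\ZZ$ and the cokernel is $\ZZ/\gcd(q,r)$ generated by the class of $x^{q-1}y^{r-1}\,dx\wedge dy$, as claimed; your version of the $1$-forms does not even live in the right multidegree, though the final answer is unaffected. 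Everything else — closedness of top forms, $\varphi$ being a degree-preserving isomorphism of $\ZZ$-modules, and the use of Corollary~\ref{deco} and Theorem~\ref{barB1thm} to identify the torsion with $H^2$ — is exactly right.
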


Proposition \ref{b1an} allows us to explicitly compute $\bar{B}_1(A_n(\mathbb{F}_p))$ for all $p$.

\begin{cor}\label{b1anp} 
If all $m_i$ are positive, then $\dim \bar B_1(A_n(\Bbb F_p))[m_1,...,m_n]$
is $2^{n-2}$ if there exists $i$ such that $m_i$ is not divisible by $p$, and $2^{n-1}$ 
otherwise (i.e., if all $m_i$ are divisible by $p$). 
\end{cor}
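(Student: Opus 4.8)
The plan is to deduce the dimension of $\bar B_1(A_n(\Bbb F_p))[m_1,\ldots,m_n]$ from the integral computation in Proposition \ref{b1an} via the universal coefficient sequence. By Corollary \ref{unco}(1), we have $\bar B_1(A_n(\Bbb F_p)) = \bar B_1(A_n(\Bbb Z))\otimes \Bbb F_p$, so it suffices to understand the abelian group $G := \bar B_1(A_n(\Bbb Z))[m_1,\ldots,m_n]$ and compute $\dim_{\Bbb F_p}(G\otimes \Bbb F_p)$. For a finitely generated abelian group, $\dim_{\Bbb F_p}(G\otimes \Bbb F_p)$ equals the free rank of $G$ plus the number of cyclic summands of $G$ whose order is divisible by $p$; the latter count is exactly $\dim_{\Bbb F_p}({\rm tor}_p(G)\otimes\Bbb F_p)$, equivalently $\dim_{\Bbb F_p}({\rm tor}_p(G)/p)$.

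First I would pin down the free rank of $G$. From Corollary \ref{deco}, $\bar B_1(A_n(\Bbb Z)) = \Omega^{ev}/\Omega^{ev}_{ex}$, and the free part is the quotient $\Omega^{ev}(\Bbb Z^n)/\Omega^{ev,+}_{cl}(\Bbb Z^n)$ appearing in the short exact sequence there. In multidegree $(m_1,\ldots,m_n)$ with all $m_i>0$, $\Omega^{ev}$ has rank $\sum_{k\ \mathrm{even}}\binom{n}{k} = 2^{n-1}$ (each subset $I$ of even size contributes the monomial $x_1^{m_1-\varepsilon_1}\cdots x_n^{m_n-\varepsilon_n}\,dx_I$ where $\varepsilon_i = 1$ iff $i\in I$, which is integral precisely because every $m_i>0$); after tensoring with $\Bbb Q$ the Poincaré lemma forces $\Omega^{ev,+}_{cl}$ to have full rank in positive ranks, i.e. rank $2^{n-1}-1$. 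Hence the free rank of $G$ equals $1$: exactly the ``constant'' direction $x_1^{m_1}\cdots x_n^{m_n}$ (rank $0$ forms, which are never exact in positive total degree). The torsion part of $G$ is, by Proposition \ref{b1an}, $(\Bbb Z/d)^{2^{n-2}}$ with $d = \gcd(m_1,\ldots,m_n)$.

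Then I finish by case analysis on whether $p\mid d$. If some $m_i$ is not divisible by $p$, then $p\nmid d$, so ${\rm tor}_p(G) = 0$ and $\dim_{\Bbb F_p}(G\otimes\Bbb F_p) = 1 + 0$; wait — this would give $1$, not $2^{n-2}$, so I must instead use $\dim_{\Bbb F_p}(G\otimes \Bbb F_p) = \mathrm{rank}(G\otimes\Bbb Q) $ is wrong in general. The correct bookkeeping: $\dim_{\Bbb F_p}(G\otimes\Bbb F_p)$ need not equal the free rank plus $p$-torsion-cyclic-count unless we are careful — actually it does: for $G = \Bbb Z^{a}\oplus\bigoplus_j \Bbb Z/c_j$, $G\otimes\Bbb F_p = \Bbb F_p^a \oplus \bigoplus_{p\mid c_j}\Bbb F_p$. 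So with $a$ the free rank and the torsion $(\Bbb Z/d)^{2^{n-2}}$: when $p\nmid d$ we get dimension $a$, and when $p\mid d$ we get $a + 2^{n-2}$. To match the claimed answer $2^{n-2}$ resp. $2^{n-1}$, I need $a = 2^{n-2}$, not $1$ — so the free-rank computation above must be redone more carefully, accounting for the fact that $\Omega^{ev,+}_{cl}$ is generally not all of $\Omega^{ev,+}$ even rationally only modulo the image of $d$; the genuine statement is that $\bar B_1\otimes\Bbb Q = \Omega^{ev}_{\Bbb Q}/\Omega^{ev}_{ex,\Bbb Q}$, which by the Poincaré lemma has dimension equal to the dimension of $H^{ev}$ (just the constant, in each positive multidegree) — hmm, this again gives $1$.

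The resolution, and the \emph{real} main obstacle, is that I have misidentified which object has dimension $2^{n-2}$ versus $2^{n-1}$: re-examining, the cleanest route is to compute $\dim_{\Bbb F_p}\bigl(\Omega^{ev}(\Bbb F_p^n)/\Omega^{ev}_{ex}(\Bbb F_p^n)\bigr)[m_1,\ldots,m_n]$ \emph{directly} over $\Bbb F_p$, using that $\bar B_1(A_n(\Bbb F_p)) = \Omega^{ev}(\Bbb F_p^n)/\Omega^{ev}_{ex}(\Bbb F_p^n)$ from Corollary \ref{deco} applied with $R=\Bbb F_p$. So the plan is: (1) over $\Bbb F_p$, in multidegree $(m_1,\ldots,m_n)$ with all $m_i>0$, compute $\dim\Omega^{ev} = 2^{n-1}$ and $\dim\Omega^{ev}_{ex}$; (2) observe $\dim\Omega^{ev}_{ex} = \dim\Omega^{odd} - \dim\Omega^{odd}_{cl}$ via $d$, and $\dim\Omega^{odd}_{cl} = \dim\Omega^{odd}_{ex} + \dim H^{odd}$; (3) use Corollary \ref{Hev-description}'s $\Bbb F_p$-analogue (or the mod-$p$ reduction of the integral cohomology) to see that $H^{odd}(\Omega(\Bbb F_p^n))[m_1,\ldots,m_n]$ vanishes when some $m_i\not\equiv 0\pmod p$ and has total dimension $2^{n-2}$ (summing $\binom{n-1}{i-1}$ over odd $i$... no, over the relevant parities) when all $m_i\equiv 0$; (4) chase the Euler-characteristic identity $\dim\Omega^{ev} - \dim\Omega^{ev}_{ex} = \dim H^{ev} + \dim\Omega^{odd}_{ex}$ and likewise for odd, combined with $\sum_i (-1)^i\dim\Omega^i[m_1,\ldots,m_n] = 0$ for all $m_i>0$, to solve for $\dim(\Omega^{ev}/\Omega^{ev}_{ex})$ in terms of $\dim H^{ev}$ and $\dim H^{odd}$. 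The hard part is step (3)–(4): getting the mod-$p$ de Rham cohomology dimensions right (the jump phenomenon: $H^i(\Omega(\Bbb F_p^n))$ in an all-positive multidegree is $\binom{n-1}{i-1}$-dimensional if $p\mid\gcd$ and $0$ otherwise, by reducing Corollary \ref{Hev-description} mod $p$ together with the Tor terms), and then assembling the pieces so that the total comes out to $2^{n-2}$ in the generic case and $2^{n-1}$ when all $m_i$ are divisible by $p$ — the difference $2^{n-1}-2^{n-2} = 2^{n-2} = \sum_{i}\binom{n-1}{i-1}$ over all $i$ of a fixed parity, which is the total rank of $H^{\bullet}$ appearing, confirming the count.
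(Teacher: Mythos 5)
Your first approach is the paper's approach, but it breaks down at the free‑rank computation, and the error is a misapplication of the Poincar\'e lemma. You write that "the Poincar\'e lemma forces $\Omega^{ev,+}_{cl}$ to have full rank in positive ranks," and again later that "$\Omega^{ev}_{\Bbb Q}/\Omega^{ev}_{ex,\Bbb Q}$ ... by the Poincar\'e lemma has dimension equal to the dimension of $H^{ev}$." Both assertions confuse $\Omega^{ev}/\Omega^{ev}_{ex}$ with $\Omega^{ev}_{cl}/\Omega^{ev}_{ex}=H^{ev}$: Poincar\'e says closed positive‑rank forms are exact, \emph{not} that all positive‑rank forms are closed. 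In fact in all‑positive multidegree most even forms are not closed (e.g.\ $x_3\,dx_1\wedge dx_2$ in $\Omega^2(\Bbb Z^3)$). The correct free rank of $\bar B_1(A_n(\Bbb Z))[\mathbf m]$ is $2^{n-2}$, not $1$; this is exactly what the paper cites from Feigin--Shoikhet, and with it the argument you set up at the start closes immediately: by Corollary~\ref{unco}, $\bar B_1(A_n(\Bbb F_p))=\bar B_1(A_n(\Bbb Z))\otimes\Bbb F_p$, and for a finitely generated abelian group $G$ with free rank $2^{n-2}$ and torsion $(\Bbb Z/\gcd(\mathbf m))^{2^{n-2}}$ (Proposition~\ref{b1an}) one has $\dim_{\Bbb F_p}(G\otimes\Bbb F_p)=2^{n-2}$ when $p\nmid\gcd(\mathbf m)$ and $2^{n-2}+2^{n-2}=2^{n-1}$ when $p\mid\gcd(\mathbf m)$. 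To see the free rank directly: with $c_k=\binom{n}{k}=\dim\Omega^k[\mathbf m]$ and $e_k=\dim\Omega^k_{ex,\Bbb Q}[\mathbf m]$, Poincar\'e gives $c_k-e_k=e_{k+1}$ for $k\ge 1$ and $c_0-e_0=1$, so $\sum_{k\ \mathrm{even}}(c_k-e_k)=1+e_3+e_5+\cdots=2^{n-2}$ by a short induction.

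Your fallback plan (computing $\dim\bigl(\Omega^{ev}(\Bbb F_p^n)/\Omega^{ev}_{ex}(\Bbb F_p^n)\bigr)[\mathbf m]$ directly via rank--nullity and the mod‑$p$ de~Rham cohomology) is a reasonable alternative route and would work, but you leave steps (3)--(4) as a sketch with acknowledged uncertainty ("no, over the relevant parities"), so as written it is not a proof. It also requires the universal coefficient correction relating $H^\bullet(\Omega(\Bbb F_p^n))$ to $H^\bullet(\Omega(\Bbb Z^n))\otimes\Bbb F_p$ plus Tor terms, which you gesture at but do not carry out. The shortest fix is simply to repair the free‑rank step in your first approach, which then coincides with the paper's argument.
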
 

\begin{proof}
By Corollary \ref{unco}, $\bar B_1(A_n(\Bbb F_p))=\bar B_1(A_n(\Bbb Z))\otimes \Bbb F_p$. 
It follows from the results of \cite{FS} that the rank of the free part 
of $\bar B_1(A_n(\Bbb Z))[\bold m]$ is $2^{n-2}$. So the result follows from 
Corollary \ref{b1an}.   
\end{proof}

\section{The Structure of $B_2(A_n)$}

\subsection{Torsion elements in $B_2(A_n(\Bbb Z))$}

In this section we study the torsion in $B_2(A_n(\Bbb Z))$. 
We will show below that for $n=2$ there is no torsion, 
so the first interesting case is $n=3$. 
In this subsection we describe the torsion in the case $n=3$. 
Later we will give a more general result 
which applies to any $n$; however, first we explicitly 
work out the cases $n=3$ and $n=2$ for the reader's convenience.

Let us denote the generators of $A_3(\Bbb Z)$ by $x,y,z$. 
Let $s, q, r$ be positive integers, and $T(s, q, r) = [z, z^{s-1}x^{q-1}y^{r-1}[x,y]] \in B_2(A_3(\ZZ))$.

\begin{thm} \label{TorsionTheorem}
\
\begin{enumerate}
\item[\begin{small}1) \end{small}] The element $T(s, q, r)$ is torsion of order dividing $\gcd(s, q, r)$. 
\item[\begin{small}2) \end{small}] If $\gcd(s, q, r) = 2$ or $3$, the order of $T(s, q, r)$ is exactly equal to $\gcd(s, q, r)$. 
\end{enumerate}
\end{thm}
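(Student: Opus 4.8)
The plan is to work inside the algebra $A_3/M_4$ (or even $A_3$ modulo a suitable ideal that kills higher commutators), where $B_2$ becomes computationally tractable, and to split the two claims. For part (1), I would first observe that $T(s,q,r) = [z, w]$ where $w = z^{s-1}x^{q-1}y^{r-1}[x,y]$, and that modulo $L_3$ the element $x^{q-1}y^{r-1}[x,y]$ is, under the isomorphism $\varphi$ of Proposition~\ref{coro}, the form $x^{q-1}y^{r-1}\,dx\wedge dy$. The key point is that $\gcd(s,q,r)\cdot T(s,q,r)$ should be expressible as a sum of $3$-commutators, hence zero in $B_2$. Concretely, I expect an identity of the form: if $m\mid\gcd(s,q,r)$ then $m\cdot T(s,q,r)$ equals a commutator of the shape $[z,[\,\cdot\,,\,\cdot\,]]$ plus $[\,\cdot\,,[z,\cdot\,]]$ terms, using that $m\,x^{q-1}y^{r-1}dx\wedge dy$ becomes exact in $\Omega(\ZZ^3)$ in an appropriate graded piece (this is exactly the $\ZZ/\gcd$ statement of Corollary~\ref{Hev-description}); translating that exactness back through $\varphi$ and Theorem~\ref{thm:Thm1} should exhibit the needed $3$-commutator expression. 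So part (1) reduces to: (a) an explicit "integration by parts" identity in $A_3$ expressing $m\cdot T$ modulo $L_4$ in terms of the exactness witness, and (b) the De Rham computation already in hand.

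For part (2), the harder direction, I need a lower bound: when $\gcd(s,q,r)=2$ or $3$, show $T(s,q,r)$ is not killed by anything smaller, i.e., $T\neq 0$ in $B_2(A_3(\ZZ))$. The strategy is to produce a homomorphism (a "detecting functional") out of $B_2(A_3(\ZZ))[s,q,r]$ to a group where the image of $T$ is visibly nonzero and of order exactly $2$ or $3$. The natural candidate is to use the universal-coefficient reduction of Corollary~\ref{unco}(2), $B_2(A_n(\Bbb F_p)) = B_2(A_n(\Bbb Z))\otimes\Bbb F_p$, and to show that $T(s,q,r)$ survives nontrivially in $B_2(A_3(\Bbb F_p))$ for $p = 2$ or $p = 3$ respectively — but that only detects the $\Bbb F_p$-reduction, so I must combine it with part (1) (which pins the order to divide $\gcd$) to conclude the order is exactly $p$. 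The cleanest route: since $T$ is $p$-torsion by (1) when $\gcd = p$, if additionally $T\ne 0$ then its order is exactly $p$. So (2) reduces to the nonvanishing $T(s,q,r)\neq 0$ in $B_2(A_3(\ZZ))$ when $\gcd(s,q,r)\in\{2,3\}$.

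To prove that nonvanishing, I would build an explicit bilinear pairing or, better, use the known structure of $B_2(A_n(\Bbb Q))$ in terms of differential forms (Feigin--Shoikhet: $B_2(A_n(\Bbb Q))\cong \Omega^{odd}_{\geq 3}(\Bbb Q^n)$, roughly) together with a torsion-sensitive refinement over $\ZZ$. Specifically, there should be a surjection or a well-defined map $B_2(A_3(\ZZ))\to \Omega^{3}(\ZZ^3)/(\text{exact}+\text{something})$ sending $T(s,q,r)$ to (a multiple of) $x^{s-1}y^{q-1}z^{r-1}\,dx\wedge dy\wedge dz$ modulo exact forms, and by Corollary~\ref{Hev-description} applied to $H^3(\Omega(\ZZ^3))[s,q,r]\cong \ZZ/\gcd(s,q,r)$ this class is nonzero of order exactly $\gcd$. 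Making this map well-defined — i.e., checking it kills $L_3$ — is where the real work lies, and this is the step I expect to be the main obstacle: one must verify that all defining relations of $B_2$ (antisymmetry, Jacobi, and the "leakage" from $L_3$ into lower-degree forms) are respected, which requires a careful analysis analogous to Proposition~\ref{relaa} but one step higher in the lower central series. I anticipate the paper handles this either by an explicit cocycle computation or by reducing to a small enough quotient of $A_3$ (controlling degrees $(s,q,r)$ with the symmetric roles of the three variables) that a direct MAGMA-style or hand verification closes the gap, after which the $\gcd\in\{2,3\}$ restriction is explained by the fact that only for these small values does the relevant Tor term in Corollary~\ref{kunnethcor} not interfere.
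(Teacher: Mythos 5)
Your strategy for part (1) has a concrete obstruction you have not noticed. Writing $T(s,q,r)=[z,w]$ with $w=z^{s-1}x^{q-1}y^{r-1}[x,y]$, the natural way to conclude $mT\in L_3$ via $\bar B_1$ would be to show $mw\in L_2+M_3$ and then use $[A,L_2+M_3]\subset L_3$. But under $\varphi$ the class of $w$ is $x^{q-1}y^{r-1}z^{s-1}\,dx\wedge dy$, which for $s>1$ is \emph{not closed} (its differential is $(s-1)x^{q-1}y^{r-1}z^{s-2}\,dz\wedge dx\wedge dy$), so it does not lie in $H^{ev,+}=\mathrm{tor}\,\bar B_1$ at all, and no integer multiple of it is exact. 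Thus "$m\,x^{q-1}y^{r-1}dx\wedge dy$ becomes exact" is not the relevant statement, and the $\mathbb Z/\gcd$ you cite from Corollary~\ref{Hev-description} is in the wrong multidegree (it would give $\gcd(q,r,s-1)$, not $\gcd(q,r,s)$). The paper instead proves $mT\in L_3$ by purely elementary means: it establishes the explicit rearrangement identity
\[
[z,w[x,y]] = [[w,y],xz] -[z,[y,wx]] + [x,[w,zy]] + x[z,w]y +[w,z]yx,
\]
which with $w=z^{s-1}$ shows $[z,z^{s-1}[x,y]]\in L_3$; then the substitutions $y\mapsto x^{q-1}y^r$ and $x\mapsto x^qy^{r-1}$ (together with Lemma~\ref{lem:FSLemma1}) extract $ru\in L_3$ and $qu\in L_3$, and a separate chain of identities gives $su\in L_3$. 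No cohomology enters.

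For part (2), you have correctly isolated the difficulty — the well-definedness of a map $B_2(A_3(\mathbb Z))\to\Omega^3(\mathbb Z^3)/\Omega^3_{ex}$ sending $T(s,q,r)$ to $z^{s-1}x^{q-1}y^{r-1}\,dx\wedge dy\wedge dz$ — but you should realize that this map being an isomorphism onto the torsion is precisely Conjecture~\ref{torb2}, which the paper only proves over $\mathbb Z[1/2]$ (Theorem~\ref{maint}) and leaves open for $2$-torsion. So your route cannot close the $p=2$ case without resolving an open problem; your appeal to the Tor term in Corollary~\ref{kunnethcor} as the reason for the $\gcd\in\{2,3\}$ restriction is not what is going on. The paper's actual argument is much more elementary and quite different: set $i=s-1,\ j=q-1,\ k=r-1$, introduce auxiliary \emph{central} variables $t_x,t_y,t_z$, expand $F(x+t_x,y+t_y,z+t_z)$ where $F(x,y,z)=[x,x^iy^jz^k[yz]]$, and extract the coefficient of $t_x^{i-1}t_y^{j-1}t_z^{k-1}$; this coefficient is $ijk\,[x,xyz[yz]]$, and since $i,j,k$ are odd, $ijk$ is a unit mod $2$. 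The nonvanishing of $[x,xyz[yz]]$ in $B_2(A_3(\mathbb F_2))$ is then checked by computer, and the $\gcd=3$ case is analogous with $[x,x^2y^2z^2[yz]]$. The restriction to $\gcd\in\{2,3\}$ is simply that only these two base cases were verified by MAGMA (see the remark following the theorem), not a structural obstruction.
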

\begin{proof}
\
\begin{enumerate}
\item[\begin{small}1) \end{small}] 
We start with the identity 
\begin{equation}\label{ide}
[z,w[x,y]] = [[w,y],xz] -[z,[y,wx]] + [x,[w,zy]] + x[z,w]y +[w,z]yx 
\end{equation}
which is checked by a direct calculation. 

Setting in this identity $w=z^{s-1}$, we get that $[z,z^{s-1}[x,y]] \in L_3(A_3(\ZZ))$. Now replacing $y$ with $x^{q-1}y^r$, we get that $[z,z^{s-1}[x,x^{q-1}y^r]] \in L_3(A_3(\ZZ))$. Setting $u:=[z,z^{s-1}x^{q-1}y^{r-1}[xy]]$, and using Lemma \ref{lem:FSLemma1}, we get that $ru \in L_3(A_3(\ZZ))$.  Similarly, $[z,z^{s-1}[y,x^qy^{r-1}]] \in L_3(A_3(\ZZ))$, and using Lemma \ref{lem:FSLemma1}, $qu \in L_3(A_3(\ZZ))$.

It remains to show that $su \in L_3(A_3(\ZZ))$. To this end, we set $m=q-1$, $k=r-1$, and write using Lemma \ref{lem:FSLemma1} (mod $L_3(A_3(\ZZ))$):
\begin{align*}
[z,x^my^k[xy]]&=[z,x^m[y^kx,y]] = [x^m,z[y^kx,y]] =[x^m,[zy]y^kx] \\
&=[x^m,[zy^k,y]x] =-[x^m,x[y,zy^k]] =-[x,x^m[y,zy^k]],
\end{align*}
which is in $L_3(A_3(\ZZ))$ by identity (\ref{ide}). 
So, putting $z^s$ instead of $z$ we get that $su=0$, as desired.

\item[\begin{small}2) \end{small}] We consider first the case when $\gcd(s, q, r) = 2$. Let $i = s-1, j = q-1, k = r -1$, and consider the element $F(x,y,z):=[x,x^iy^jz^k[yz]] = T(s, q, r)$. It suffices to show that this element is nontrivial in $B_2(A_3(\mathbb{F}_2))$. 

Let $t_x, t_y, t_z$ be commutative variables (i.e. they commute with $x,y,z$ and each other). Consider $F(x+t_x,y+t_y,z+t_z)$, and take the coefficient of tridegree  \linebreak $(i-1,j-1,k-1)$ in $t_x,t_y,t_z$. Clearly, it is $ijk[x,xyz[yz]]$. But we know, from computer calculations in MAGMA, that this is nonzero in $B_2(A_3(\mathbb{F}_2))$. We see that for all odd $i,j,k$,  $F(x,y,z)$ is nonzero in $B_2(A_3(\mathbb{F}_2))$.

A similar procedure works in the case where $\gcd(s,q,r)=3$, with $i,j,k$ of the form \ $3m-1$. The relevant coefficient (of degree $i-2,j-2,k-2$) would be $\binom{i}{2}\binom{j}{2}\binom{k}{2}[x,x^2y^2z^2[yz]]$, and the element $[x,x^2y^2z^2[yz]]$ is nonzero in $B_2(A_3(\mathbb{F}_3))$ by a computer calculation.

\end{enumerate}
\end{proof}
\begin{rem}
1. Actually, the argument in the proof for part 2 would work for any $\gcd(s,q,r) = m$ 
if we knew that $[x,x^{m-1}y^{m-1}z^{m-1}[yz]]$ has order exactly 
$m$ in $B_2(A_3(\ZZ))$.

2. Below we will give another proof that the order of $T(s,q,r)$ is exactly ${\rm gcd}(s,q,r)$, which works
when ${\rm gcd}(s,q,r)$ is odd. 
\end{rem}

\subsection{Torsion in $B_2(A_2(\Bbb Z))$}

\begin{thm} \label{noTorsionB2}
$B_2(A_2(\ZZ))$ has no torsion. 
\end{thm}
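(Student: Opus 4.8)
The plan is to compute $B_2(A_2(\ZZ))$ directly by exhibiting an explicit basis, using the isomorphisms already established for $\bar B_1$ together with the exact sequence relating $B_2$ to $L_2$ and $L_3$. First I would note that by Corollary \ref{unco}(2), it suffices to understand $L_2(A_2(\ZZ))$ and $L_3(A_2(\ZZ))$ as submodules of $A_2(\ZZ)$; since $A_2(\ZZ)$ is a free abelian group (spanned by words in $x,y$), any submodule is free, so torsion in the quotient $B_2 = L_2/L_3$ can only come from the failure of $L_3$ to be a direct summand inside $L_2$. Thus the goal is to show $L_3(A_2(\ZZ))$ is a saturated (pure) subgroup of $L_2(A_2(\ZZ))$: if $m w \in L_3$ for $w \in L_2$ and $m>0$, then $w \in L_3$.

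The key tool is the map $\varphi$ from Proposition \ref{coro} and Theorem \ref{thm:Thm1}, which identifies $L_2(A_n/M_3)$ with $\Omega^{ev}_{ex}$. For $n=2$, we have $\Omega^{ev} = \Omega^0 \oplus \Omega^2$, and $\Omega^{ev}_{ex} = \Omega^2_{ex} = d\Omega^1$, which is exactly the set of $2$-forms $f\,dx\wedge dy$ such that $f$ is a total derivative — equivalently (over $\ZZ$) all $2$-forms, since $\Omega^2$ has no cohomology obstruction beyond what Lemma \ref{omega1}/Corollary \ref{Hev-description} records; in fact for $n=2$ one checks $\Omega^{2,+}_{cl}=\Omega^{2,+}_{ex}$ would be the relevant statement but here every $2$-form is automatically closed, and the cokernel $\Omega^2/\Omega^2_{ex} = H^2(\Omega(\ZZ^2))$ is torsion. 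So the composite $L_2(A_2) \to L_2(A_2/M_3) = \Omega^2_{ex} \subset \Omega^2$ handles the "$\Omega^2$-direction." I would then analyze $M_3(A_2) = A_2 L_3(A_2)$: since $n=2$, relation (4) of Proposition \ref{relaa} forces $u_{ij}u_{kl}=0$ whenever indices repeat, and with only two indices every product of two $u$'s vanishes, so $A_2/M_3$ has the simple basis $x^a y^b u_{12}^{\epsilon}$, $\epsilon\in\{0,1\}$. This makes $M_3(A_2) \cap L_2(A_2)$ concretely describable, and the claim reduces to: $M_3(A_2)\cap L_2(A_2) = L_3(A_2)$ modulo showing purity of the latter.

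The main obstacle — and the heart of the proof — will be controlling $M_3 \cap L_2$ versus $L_3$ and verifying there is no torsion "hidden" in the kernel of $\varphi$, i.e. in $M_3(A_2)$ itself. I expect the cleanest route is: (1) show $L_3(A_2(\ZZ)) = M_3(A_2(\ZZ)) \cap L_2(A_2(\ZZ))$, perhaps via the Jacobi/Leibniz manipulations used in Proposition \ref{relaa} (which show $[[a,b],c]$-type elements generate both up to lower terms); (2) show $A_2(\ZZ)/M_3(A_2(\ZZ))$ and $A_2(\ZZ)/L_2(A_2(\ZZ)) = B_1$ are both \emph{free} abelian (the latter is cyclic words, the former is Proposition \ref{anm3}), hence $M_3$ and $L_2$ are pure in $A_2$; (3) deduce $M_3\cap L_2$ is pure in $L_2$, so $L_2/(M_3\cap L_2) \cong \varphi(L_2) = \Omega^2_{ex}$ is torsion-free; (4) conclude $B_2(A_2(\ZZ)) = L_2/L_3 = L_2/(M_3\cap L_2)$ is torsion-free. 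Step (1) is where the real work lies, since $L_3 \subseteq M_3 \cap L_2$ is immediate (Lemma \ref{lem:FSLemma1} gives $L_3 \subseteq L_2$ and $L_3 \subseteq M_3$ trivially) but the reverse inclusion — that everything in $L_2$ killed by $\varphi$ actually lies in $L_3$ — requires genuinely exploiting that there are only two variables, likely by an explicit reduction of an arbitrary element of $A_2 L_3(A_2)$ lying in $L_2$ to a sum of triple commutators. If step (1) proves awkward, a fallback is to compute the Hilbert series of $L_2/L_3$ over $\ZZ$ by comparing ranks over $\QQ$ and over $\F_p$ (using Corollary \ref{unco}(2) and the Feigin--Shoikhet description of $B_2(A_2(\QQ))$ in terms of exact $2$-forms) and checking these agree for all $p$, which forces torsion-freeness.
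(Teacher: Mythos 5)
Your overall strategy is sound \emph{in principle}, but the step you flag as ``the real work'' — step (1), that $L_3(A_2(\ZZ)) = M_3(A_2(\ZZ)) \cap L_2(A_2(\ZZ))$ — is not a reduction of the problem; it is logically equivalent to the theorem itself. Indeed, $L_2/(M_3\cap L_2) \cong L_2(A_2/M_3) \cong \Omega^2_{ex}(\ZZ^2)$ is automatically torsion-free (being a subgroup of the free group $\Omega^2$), and over $\QQ$ one has $L_3 = M_3\cap L_2$ by a rank count; so $(M_3\cap L_2)/L_3$ is exactly the torsion subgroup of $B_2$. Asserting it vanishes is therefore just restating the claim, and the hint ``perhaps via the Jacobi/Leibniz manipulations used in Proposition \ref{relaa}'' does not point toward a proof: those manipulations establish relations inside $A/M_3$, not the reverse containment $M_3\cap L_2 \subseteq L_3$. (Your purity observations in steps (2)--(3) are correct but idle — torsion-freeness of $\Omega^2_{ex}$ needs no purity argument, and the whole weight sits on step (1).) The fallback via Corollary \ref{unco}(2) and comparing Hilbert series is also not carried out: it requires knowing $\dim_{\F_p} B_2(A_2(\F_p))[d]$ for every $p$ and $d$, and the Feigin--Shoikhet computation is a characteristic-zero result that does not transfer to $\F_p$ for free — pinning that dimension down is again essentially the content of the theorem.

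The paper takes a genuinely different route, sandwiching $B_2$ from the other side: rather than exhibiting $B_2$ as a subgroup of a free group, it exhibits it as a \emph{quotient} of a free group of the correct rank. Concretely: by Lemma \ref{degg1} one has $B_2 = [x,\bar B_1] + [y,\bar B_1]$; then one shows (Lemma \ref{torLemma2}) that the torsion part $\Omega^2/\Omega^2_{ex}$ of $\bar B_1(A_2(\ZZ))$ is killed by bracketing with $x$ or $y$ — and this step crucially uses the three-variable torsion relation from Theorem \ref{TorsionTheorem}(1), specialized to two variables, so it is not a purely ``two-variable'' manipulation. This leaves $B_2$ as a quotient of $V\otimes\Omega^0 \cong \Omega^1$. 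One then exhibits explicit integral lifts $\hat f, \hat g \in A_2$ of the coefficients of a closed homogeneous $1$-form satisfying $[x,\hat f] + [y,\hat g] = 0$, showing closed $1$-forms die, so $B_2$ is a quotient of the free group $\Omega^1/\Omega^1_{cl}$. Since by Feigin--Shoikhet this free group already has the same Hilbert series as $B_2(A_2(\QQ))$, the surjection must be an isomorphism, and torsion-freeness follows. If you want to salvage your subgroup-side approach, you would need to actually produce the reduction showing an arbitrary element of $M_3\cap L_2$ is a sum of triple commutators over $\ZZ$; absent that, the argument has a genuine gap.
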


\begin{proof}  We make use of the following:
\begin{lem} \label{torLemma1}
We have $B_2(A_n(\ZZ)) = \sum_{i=1}^n [x_i,\bar{B}_1(A_n(\ZZ))]$. 
\end{lem}
\begin{proof}
The statement follows from Lemma \ref{degg1}.
\end{proof}

Let us denote the generators of $A_2$ by $x,y$. 

\begin{lem}  \label{torLemma2}
If $T$ is a torsion element of $\bar{B}_1(A_2(\ZZ))$, then $[x,T]=[y,T]=0$ in $B_2(A_2(\ZZ))$.
\end{lem}

\begin{proof}
By Corollary \ref{b1a2}, torsion elements are linear combinations of
elements of the form $T=x^{q-1}y^{r-1}[xy]$ (corresponding to the
2-form $x^{q-1}dx \wedge y^{r-1}dy$). Now, 
$[x,T]=[x,x^{q-1}y^{r-1}[xy]]$. This is the specialization of
$T(1,q,r)$ under setting $z=x$ (where by ``specialization", we mean
that we apply the homomorphism $A_3 \rightarrow A_2$ such that $x,y,z$
go to $x,y,x$, respectively). Since $T(1,q,r)=0$ in $B_2$, we
conclude that the specialization is zero as well, i.e., $[x,T]=0$. 
Similarly, $[y,T]=0$.
\end{proof}

We showed in Theorem \ref{thm:Thm1}, that \[ \bar{B}_1(A_2) =  \Omega^0\oplus\Omega^{2}/\Omega^{2}_{ex}. \]

Because the last summand is all torsion, by Lemma \ref{torLemma2}, we
can strengthen the formula for $B_2$ in Lemma \ref{torLemma1} to say
that $B_2=[x,\Omega^0]+[y,\Omega^0]$. Let us denote the $\ZZ$-span of $x$ and $y$ by $V$. Since $x,y$ is a basis of $V$,
we see that $B_2=[V,\Omega_0]$, i.e. $B_2$ is a quotient of $V\otimes
\Omega_0$. 

Note that we have an identification $V\otimes \Omega^0\cong \Omega^1$, via $x\otimes f+y\otimes g\mapsto fdx+gdy$.  Let us show that closed 1-forms map to zero in $B_2$.  
Let $f,g \in \Omega^0$ be such that $fdx+gdy$ is a closed form, i.e. $f_y=g_x$. We may assume that this form is homogeneous of bidegree $(q, r)$. Then we can set $f=qx^{q-1}y^r/d, g=rx^qy^{r-1}/d$, where $d=\gcd(q, r)$. 
Define lifts $\hat f$ and $\hat g$ of $f,g$ to $A_2$
by the formulas
$$
\hat f=\sum_{i=0}^{\frac{q}{d}-1}x^iy^{\frac{r}{d}}
(x^{\frac{q}{d}}y^{\frac{r}{d}})^{d-1}x^{\frac{q}{d}-1-i},
$$
$$
\hat g=\sum_{i=0}^{\frac{r}{d}-1}y^i
(x^{\frac{q}{d}}y^{\frac{r}{d}})^{d-1}x^{\frac{q}{d}}y^{\frac{r}{d}-1-i}.
$$
Then it is easy to see that 
\[ 
[x,\hat f]+[y,\hat g]=0. 
\]
in $A_2$. This implies that the closed 1-form $fdx+gdy$ is killed, as desired.

Thus, we see that $B_2(A_2(\ZZ))$ is a quotient of the 
free group $\Omega^1/\Omega^1_{cl}$. But by the Feigin-Shoikhet theorem \cite{FS}, 
this free group already has the same Hilbert series as $B_2(A_2(\mathbb{Q}))$. So, there is no torsion.
\end{proof}

\subsection{Torsion in $B_2(A_n(\Bbb Z))$}

\begin{conj}\label{torb2}
The graded abelian group $B_2(A_n(\Bbb Z))$ 
is isomorphic to 
$$
\oplus_{i\ge 1}\Omega^{2i+1}(\Bbb Z^n)/\Omega^{2i+1}_{ex}(\Bbb Z^n).
$$
Thus, the torsion in $B_2(A_n(\ZZ))$ is isomorphic to 
\[ H^3(\Omega(\ZZ^n)) \oplus H^5(\Omega(\ZZ^n)) \ldots = 
\bigoplus^{\lfloor{\frac{n-1}{2}\rfloor}} _{k=1} H^{2k+1}(\Omega(\ZZ^n)). \]
In particular, the torsion of $B_2(A_3(\ZZ))[q,r,s]$ is spanned by the element $T(s, q, r)$, which has order exactly $\gcd(s, q, r)$. 
\end{conj}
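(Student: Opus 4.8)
The plan is to imitate the successful $n=2$ argument (Theorem \ref{noTorsionB2}) as far as it will go, and to isolate precisely where the extra cohomology appears. By Lemma \ref{torLemma1}, $B_2(A_n(\ZZ))=\sum_i[x_i,\bar B_1(A_n(\ZZ))]$, and by Corollary \ref{deco} we have the explicit description $\bar B_1(A_n)=\Omega^{ev}/\Omega^{ev}_{ex}$, with $\varphi([a,x_i])=d\varphi(a)\wedge dx_i$ from Proposition \ref{coro}. So, applying $d(-)\wedge dx_i$ degree by degree, $B_2(A_n(\ZZ))$ is a quotient of $\bigoplus_i \big(\Omega^{odd}/\,\ker(-\wedge dx_i)\big)$; summing the images, the natural surjection lands in $\Omega^{odd,\ge 3}/\Omega^{odd,\ge 3}_{ex}$ (the exact forms being killed because a form $d\varphi(a)\wedge dx_i$ is automatically exact, and conversely every exact odd form of rank $\ge 3$ is of this shape, exactly as in the proof of Theorem \ref{thm:Thm1}). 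Thus the first step is to construct a well-defined surjection
\[
\Psi: \Omega^{odd,\ge 3}(\ZZ^n)/\Omega^{odd,\ge 3}_{ex}(\ZZ^n)\twoheadrightarrow B_2(A_n(\ZZ)),
\]
or rather its inverse — i.e. to show $B_2$ surjects onto this quotient and that the kernel vanishes. Note $\Omega^{odd,\ge 3}/\Omega^{odd,\ge 3}_{ex}\cong \bigoplus_{i\ge 1}\Omega^{2i+1}/\Omega^{2i+1}_{ex}$, which by the argument of Corollary \ref{deco} is an extension of the free group $\Omega^{odd,\ge 3}/\Omega^{odd,\ge 3}_{cl}$ by $H^{odd,\ge 3}=H^3\oplus H^5\oplus\cdots$, matching the claimed answer.

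The second step is to pin down the kernel of the map $B_2\to \Omega^{odd,\ge 3}/\Omega^{odd,\ge 3}_{ex}$, i.e. to show that a relation $\sum_i[x_i,a_i]=0$ in $B_2$ forces $\sum_i d\varphi(a_i)\wedge dx_i$ to be exact, and conversely. One direction is formal; the converse — that every closed-but-not-necessarily-exact relation among the $d\varphi(a_i)\wedge dx_i$ that actually holds in $\Omega$ is already realized in $B_2$ — is the crux. Here I would try to lift explicit closed forms to $A_n$ and produce explicit elements of $A_n$ witnessing the relation, generalizing the formulas for $\hat f,\hat g$ in the $n=2$ proof: given a closed form $\omega=\sum_i f_i\,dx_i\wedge(\text{rank }2i\text{ monomial in }d\text{'s})$ one wants lifts $\hat f_i\in A_n$ with $\sum_i[x_i,\hat f_i]\equiv 0$ modulo $L_3$. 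The combinatorics of such lifts (essentially a discrete Poincaré-homotopy operator over $\ZZ$) is what must be made to work in all multidegrees simultaneously. Once the kernel is identified, comparison of Hilbert series with the Feigin–Shoikhet description of $B_2(A_n(\QQ))$ \cite{FS} should force the free ranks to agree, so that no spurious torsion is introduced and the torsion is exactly $H^{odd,\ge 3}$, which Corollary \ref{Hev-description} evaluates.

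The third step is the $n=3$ specialization. Once $\mathrm{tor}\,B_2(A_3(\ZZ))[q,r,s]\cong H^3(\Omega(\ZZ^3))[q,r,s]\cong \ZZ/\gcd(q,r,s)$ by Corollary \ref{Hev-description} (with the binomial coefficient $\binom{2}{2}=1$), it remains to check that $T(s,q,r)$ is a generator. By Theorem \ref{TorsionTheorem} its order divides $\gcd(s,q,r)$ and equals it when the gcd is $2$ or $3$; to finish in general one would match $T(s,q,r)$ under $\varphi$-type bookkeeping with a generator of $H^3(\Omega(\ZZ^3))[q,r,s]$, e.g. the class of $x^{q-1}dx\wedge y^{r-1}dy\wedge z^{s-1}dz$ scaled appropriately, and invoke that $\Psi$ is an isomorphism; alternatively use the "another proof" alluded to in the remark after Theorem \ref{TorsionTheorem} for the odd-gcd case and a separate argument for powers of $2$.

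I expect the main obstacle to be the converse direction in step two: proving that \emph{every} integral relation forced by closedness of forms is already visible in $B_2$, i.e. constructing the universal integral homotopy/lifting formulas and verifying $\sum_i[x_i,\hat f_i]\in L_3(A_n)$ uniformly. This is precisely the point where the $2^r$-torsion behaves subtly — indeed the conjecture is only asserted, not proved, and the authors flag the $2^r$-torsion as conjectural — so a complete proof of step two is likely out of reach of the elementary methods here, which is why the statement is a conjecture rather than a theorem.
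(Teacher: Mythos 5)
This is stated as a \emph{conjecture} in the paper — there is no proof of it there — so you were right to say a complete argument is out of reach; what the paper actually establishes are two partial results. Theorem \ref{upbo} proves the conjectured answer as an \emph{upper bound} on torsion, and its proof is essentially the strategy you sketch: surject $V\otimes\bar B_1\twoheadrightarrow B_2$ via Lemma \ref{torLemma1}, use the skew-symmetry identity $[a,[b,c]z]+[b,[a,c]z]\in L_3$ to factor through $\Omega^1\oplus\Omega^{odd,\ge3}/\Omega^{odd,\ge3}_{ex}$, then absorb closed $1$-forms using explicit integral lifts $\hat f_i$ (Lemma \ref{le}) generalizing the $n=2$ formulas. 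Your step 2 converse and the ``discrete integral Poincar\'e homotopy'' you call the crux is precisely what the paper does \emph{not} do by elementary means: to rule out extra collapsing (the kernel $K$ in the exact sequence) the paper turns, in Theorem \ref{maint}, to a genuinely different toolbox that you don't mention — the cyclic-homology exact sequence $HC_1(A)\to\wedge^2\bar B_1/W\to B_2\to 0$ from the appendix to \cite{DKM}, the Loday--Quillen computation of $HC_1(A_n(\ZZ))$ (Theorem \ref{cychom}), the Fedosov $*$-product identification of $\wedge^2\bar B_1/W$ with $\Omega^{odd}/\Omega^{odd}_{ex}$, and the divisibility Lemma \ref{comb} on shuffle subwords. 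This machinery is what resolves the odd torsion (i.e. proves the conjecture over $\ZZ[1/2]$); it is the $2$-torsion, where the Fedosov product's $\tfrac12$ fails, that remains conjectural. So: your identification of the gap is accurate, and your upper-bound sketch parallels the paper's Theorem \ref{upbo}, but the proposal omits the cyclic-homology route which is the paper's actual engine for the provable part of the conjecture. One small inaccuracy to flag: the surjection you build does not land in $\Omega^{odd,\ge3}/\Omega^{odd,\ge3}_{ex}$ alone but in $\Omega^1\oplus\Omega^{odd,\ge3}/\Omega^{odd,\ge3}_{ex}$ (the rank-$1$ summand carries the free part of $B_2$, isomorphic to $\Omega^1/\Omega^1_{cl}$ after quotienting by closed $1$-forms, which Lemma \ref{le} handles); as written, your surjection would miss the bulk of $B_2$.
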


This conjecture implies the following conjecture in characteristic $p$:

\begin{conj}\label{b2charp}
If all $m_i$ are positive, then $\dim B_2(A_n(\Bbb F_p))[m_1,...,m_n]$
is $2^{n-2}$ if there exists $i$ such that $m_i$ is not divisible by $p$, and $2^{n-1}-1$ 
otherwise (i.e., if all $m_i$ are divisible by $p$). 
\end{conj}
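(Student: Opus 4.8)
The plan is to deduce Conjecture~\ref{b2charp} from Conjecture~\ref{torb2} --- the implication claimed in the text just above it --- so that, granting the description of the torsion of $B_2(A_n(\Bbb Z))$, only a Hilbert-series bookkeeping remains, built from Corollary~\ref{unco}, Corollary~\ref{Hev-description}, and the Feigin-Shoikhet computation over $\Bbb Q$ \cite{FS}. Fix $\mathbf m=(m_1,\dots,m_n)$ with all $m_i>0$, put $d=\gcd(m_1,\dots,m_n)$, and write $G:=B_2(A_n(\Bbb Z))[\mathbf m]$, a finitely generated abelian group. By Corollary~\ref{unco}(2) we have $B_2(A_n(\Bbb F_p))[\mathbf m]=G\otimes\Bbb F_p$, and choosing a splitting $G\cong\Bbb Z^r\oplus\operatorname{tor}(G)$, $r=\operatorname{rank}G$, gives
$$\dim_{\Bbb F_p}B_2(A_n(\Bbb F_p))[\mathbf m]=r+\dim_{\Bbb F_p}\bigl(\operatorname{tor}(G)\otimes\Bbb F_p\bigr);$$
thus the statement reduces to computing $r$ and the elementary divisors of $\operatorname{tor}(G)$.

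For the rank, flatness of $\Bbb Q$ over $\Bbb Z$ (exactly as in the proof of Corollary~\ref{unco}) gives $r=\dim_{\Bbb Q}B_2(A_n(\Bbb Q))[\mathbf m]$, which equals $2^{n-2}$ by Feigin-Shoikhet. I would record this via the De Rham bookkeeping underlying their formula: $B_2(A_n(\Bbb Q))$ has the graded dimensions of $\bigoplus_{k\ge1}\Omega^{2k}_{ex}(\Bbb Q^n)$, and running the (over $\Bbb Q$, acyclic) De Rham complex in the fixed multidegree $\mathbf m$, where $\dim\Omega^j[\mathbf m]=\binom{n}{j}$ for all $m_i>0$, one gets $\dim\Omega^{2k}_{ex}[\mathbf m]=\binom{n-1}{2k-1}$ (using $\binom{n}{j}=\binom{n-1}{j-1}+\binom{n-1}{j}$), so $r=\sum_{k\ge1}\binom{n-1}{2k-1}=\sum_{j\text{ odd}}\binom{n-1}{j}=2^{n-2}$.

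For the torsion, Conjecture~\ref{torb2} identifies $\operatorname{tor}(G)$ with $\bigoplus_{k\ge1}H^{2k+1}(\Omega(\Bbb Z^n))[\mathbf m]$, which Corollary~\ref{Hev-description} evaluates, for all $m_i>0$, as $\bigoplus_{k\ge1}(\Bbb Z/d)^{\binom{n-1}{2k}}\cong(\Bbb Z/d)^{N}$ with
$$N=\sum_{k\ge1}\binom{n-1}{2k}=\Bigl(\sum_{j\text{ even}}\binom{n-1}{j}\Bigr)-1=2^{n-2}-1.$$
In particular every elementary divisor of $\operatorname{tor}(G)$ equals $d$, so $\dim_{\Bbb F_p}(\operatorname{tor}(G)\otimes\Bbb F_p)$ is $N=2^{n-2}-1$ when $p\mid d$ and $0$ otherwise. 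Since $p\mid d$ exactly when $p\mid m_i$ for every $i$, substitution into the displayed identity gives $\dim B_2(A_n(\Bbb F_p))[\mathbf m]=2^{n-2}$ when some $m_i$ is not divisible by $p$, and $2^{n-2}+(2^{n-2}-1)=2^{n-1}-1$ when all of them are, which is the assertion.

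The deduction is routine; the real difficulty is Conjecture~\ref{torb2} itself, namely that $B_2(A_n(\Bbb Z))$ carries no torsion beyond $\bigoplus_{k\ge1}H^{2k+1}(\Omega(\Bbb Z^n))$. A lower bound is within reach --- the free rank is $\ge2^{n-2}$ by Feigin-Shoikhet, and the explicit classes $T(s,q,r)$ of Theorem~\ref{TorsionTheorem}, together with their multivariable generalizations, bound the torsion from below when $p\mid\gcd(\mathbf m)$ --- but the matching upper bound, ruling out extra torsion, appears to need a genuine $\Bbb Z$-form of the Feigin-Shoikhet model for $B_2$ (a controlled integral resolution, or an analysis via the quantized form algebra $\widetilde\Omega$ of Proposition~\ref{phimap} over rings in which $2$ is not invertible), and this is exactly what is still missing.
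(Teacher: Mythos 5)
Your deduction is precisely the implication the paper asserts but does not spell out: granting Conjecture~\ref{torb2}, one combines Corollary~\ref{unco}(2) with the Feigin--Shoikhet rank $2^{n-2}$ and the $H^{\mathrm{odd},\ge3}$ torsion count $(\Bbb Z/d)^{2^{n-2}-1}$ from Corollary~\ref{Hev-description}, exactly as you do, and the binomial bookkeeping ($\dim\Omega^j_{ex}[\mathbf m]=\binom{n-1}{j-1}$, hence $\sum_{j\text{ odd}}\binom{n-1}{j}=2^{n-2}$ and $\sum_{j\ge2\text{ even}}\binom{n-1}{j}=2^{n-2}-1$) checks out. You are also right that the genuine open content is Conjecture~\ref{torb2} itself; the paper proves only the upper bound (Theorem~\ref{upbo}) and a $\Bbb Z[1/2]$-localized version (Theorem~\ref{maint}), so this conjecture remains unproved in general, and your proposal correctly reduces to it rather than claiming to prove it.
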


The following theorem shows that Conjectures \ref{torb2} and \ref{b2charp} hold at least as upper bounds. 

\begin{thm}\label{upbo}
The torsion in $B_2(A_n(\ZZ))$ is a quotient of
\[ H^3(\Omega(\ZZ^n)) \oplus H^5(\Omega(\ZZ^n)) \ldots = \bigoplus^{\lfloor{\frac{n-1}{2}\rfloor}} _{k=1} H^{2k+1}(\Omega(\ZZ^n)). \]
In particular, the torsion of $B_2(A_3(\ZZ))[q,r,p]$ is spanned by the element $T(p, q, r)$. 
Also, the numbers in Conjecture \ref{b2charp} are upper bounds for 
$\dim B_2(A_n(\Bbb F_p))[m_1,...,m_n]$. 
\end{thm}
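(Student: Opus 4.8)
The plan is to mimic the structure of the proof of Theorem \ref{noTorsionB2}, but for general $n$, using the map $\varphi$ to push everything into the de Rham picture. By Lemma \ref{torLemma1}, $B_2(A_n(\ZZ)) = \sum_{i=1}^n [x_i, \bar B_1(A_n(\ZZ))]$, and by Corollary \ref{deco} we have $\bar B_1(A_n) = \Omega^{ev}/\Omega^{ev}_{ex}$. The first step is to understand the action of the brackets $[x_i, -]$ on $\bar B_1$ in the de Rham language: Proposition \ref{coro} gives $\varphi([a, x_i]) = d\varphi(a) \wedge dx_i$, so the image of $[x_i, \bar B_1]$ under the (naive, non-algebra) map $\varphi$ lands in $d\Omega^{ev} \wedge dx_i \subset \Omega^{odd}_{ex}$. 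Summing over $i$, one sees that $B_2(A_n(\ZZ))$ is a quotient of $\Omega^{odd}_{ex}(\ZZ^n) = \bigoplus_{i\ge 1}\Omega^{2i+1}_{ex}(\ZZ^n)$ — but one must be slightly careful, since $\varphi$ is not an algebra homomorphism, so I would instead argue directly that there is a surjection from the relevant space of exact odd forms onto $B_2$, or (cleaner) that $B_2$ is a quotient of $V\otimes \bar B_1$ modulo the obvious relations, and then compare with forms.

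The second step is to separate out the torsion. We have the short exact sequence $0 \to \Omega^{odd}_{ex,cl}/\Omega^{odd}_{ex} \to \Omega^{odd}_{ex}/(\text{further relations}) \to \ldots$; more to the point, $\Omega^{2i+1}_{ex}$ sits in $0 \to \Omega^{2i+1}_{ex}/\Omega^{2i+1}_{ex,\text{``liftable''}} \to \ldots$. The key point, exactly as in Theorem \ref{noTorsionB2}, is: (a) over $\ZZ$, the closed-forms relations coming from the Feigin–Shoikhet description over $\QQ$ hold in $B_2(A_n(\ZZ))$ as well, i.e. the free part of $B_2(A_n(\ZZ))$ is a quotient of the free group $\Omega^{odd}_{ex}/\Omega^{odd}_{cl} \cong \Omega^{odd}_{ex}/\Omega^{odd}_{ex,cl}$ — actually one wants that any \emph{closed} exact odd form that is ``explicitly exact via a nice lift'' maps to zero; and (b) therefore the torsion of $B_2(A_n(\ZZ))$ is a quotient of the kernel of $\Omega^{odd}_{ex} \to \Omega^{odd}_{ex}/\Omega^{odd}_{ex,cl}$, which is $\Omega^{odd}_{ex,cl}$, and the torsion part of this is precisely $H^{odd,+}(\Omega(\ZZ^n)) \cap (\text{exact}) = H^{odd,+}$ itself (since over $\QQ$ everything is exact, so $\Omega^{odd}_{ex,cl}$ and $\Omega^{odd}_{cl}$ differ only by the free group $\Omega^{odd}_{ex}$, and $H^{odd,+} = \Omega^{odd}_{cl}/\Omega^{odd}_{ex}$ is all torsion). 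This identifies the torsion as a quotient of $H^3(\Omega(\ZZ^n)) \oplus H^5(\Omega(\ZZ^n)) \oplus \ldots$.

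The main obstacle is step (a): producing, for each closed exact odd form, an explicit lift to $A_n$ that is visibly killed in $B_2$ — this is the generalization of the explicit formulas for $\hat f, \hat g$ in the proof of Theorem \ref{noTorsionB2}, and it is the place where one genuinely has to work rather than quote $\QQ$-results. Concretely I would show: if $\omega \in \Omega^{2i+1}$ is exact, say $\omega = d\eta$ with $\eta \in \Omega^{2i}$, write $\eta = \varphi(a)$ for $a = \varphi^{-1}(\eta) \in A_n/M_3$; then $[x_j, a]$ for varying $j$ realize $d\eta \wedge dx_j$, and the Feigin–Shoikhet closed-form relations (which hold over $\ZZ$ because they hold over $\QQ$ and $B_1$-type spaces are torsion-free in the relevant degrees, cf. Corollary \ref{unco} and the torsion-freeness used there) force the closed combinations to vanish in $B_2(A_n(\ZZ))$. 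Once step (a) is in hand, the counting statement (upper bounds for $\dim B_2(A_n(\ZZ))[\mathbf m]$, hence for $\dim B_2(A_n(\ZZ))[\mathbf m]\otimes\FF_p = \dim B_2(A_n(\FF_p))[\mathbf m]$ by Corollary \ref{unco}(2)) follows by combining the free rank $2^{n-2}$ (from \cite{FS}) with the torsion bound $(\ZZ/\gcd(\mathbf m))^{\sum_k \binom{n-1}{2k}}$ from Corollary \ref{Hev-description}, and observing $\sum_{k\ge 1}\binom{n-1}{2k} = 2^{n-2}-1$, giving $2^{n-2} + (2^{n-2}-1) = 2^{n-1}-1$ when $p \mid \gcd(\mathbf m)$ and $2^{n-2}$ otherwise. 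The $n=3$ statement that the torsion is spanned by $T(p,q,r)$ is then immediate since $H^{odd,+}(\Omega(\ZZ^3)) = H^3(\Omega(\ZZ^3))$ is rank one in each positive multidegree, and $T(p,q,r)$ is a nonzero such torsion element by Theorem \ref{TorsionTheorem}.
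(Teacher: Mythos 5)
Your overall plan does track the paper's proof: the same surjection $\sum_i[x_i,-]\colon V\otimes\bar B_1\twoheadrightarrow B_2$, the translation to differential forms, the explicit lifts of closed $1$-forms generalizing the $\hat f,\hat g$ of Theorem \ref{noTorsionB2}, and the comparison with the Feigin--Shoikhet result over $\Bbb Q$ to split off the free part and bound the torsion by $H^{odd,\ge 3}$. The final dimension bookkeeping ($2^{n-2}+\sum_{k\ge1}\binom{n-1}{2k}=2^{n-1}-1$) is also correct. However there is a genuine gap in the first step, exactly where you note ``one must be slightly careful.''

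The map $\varphi$ lands in $\Omega^{ev}$, so $\varphi([a,x_i])=d\varphi(a)\wedge dx_i$ lies in $\Omega^{ev}_{ex}$, not $\Omega^{odd}_{ex}$; and in any case $\varphi$ restricted to $L_2$ only gives a surjection $B_2\twoheadrightarrow L_2/(L_2\cap M_3)\cong\Omega^{ev}_{ex}$, which is the wrong direction. What you need is a map \emph{onto} $B_2$: one starts from $\xi\colon V\otimes\bar B_1\to B_2$ and must show that $\xi$ factors through $v\otimes\omega\mapsto dv\wedge\omega$, so that $B_2$ is a quotient of $\Omega^1\oplus\Omega^{odd,\ge3}/\Omega^{odd,\ge3}_{ex}$. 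This factoring is the nontrivial content of the step you phrase as ``modulo the obvious relations,'' and it is \emph{not} obvious: the paper establishes it by a direct computation showing $[a,[b,c]z]+[b,[a,c]z]\in L_3$, i.e.\ the image of $[a,[b,c]z]$ in $B_2$ is skew-symmetric in $a,b,c$. That antisymmetry is precisely what kills the kernel of $v\otimes\omega\mapsto dv\wedge\omega$. Without this identity (or an equivalent one) the claim that $B_2$ is a quotient of any odd-form space is unsupported, and the rest of the argument has nothing to stand on. A secondary slip: you repeatedly write ``closed exact odd form'' and ``$\Omega^{odd}_{ex}/\Omega^{odd}_{cl}$'' (the latter is zero since exact implies closed); what Lemma~\ref{le} actually handles is closed $1$-forms, shown by the $\hat f_i$-lifts to land in the image of the $\Omega^{\ge3}$ part, after which freeness of $\Omega^1/\Omega^1_{cl}$ and the FS isomorphism over $\Bbb Q$ give the exact sequence and the torsion bound. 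Once the skew-symmetry factoring is supplied, your endgame coincides with the paper's.
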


\begin{proof} 
Let $V$ be the $\Bbb Z$-span of the generators $x_1,...,x_n$. 
By Lemma \ref{torLemma1} and Corollary \ref{deco}, we have a surjective   
homomorphism $\xi: V\otimes \Omega^{ev}(\Bbb Z^n)/\Omega^{ev}_{ex}(\Bbb Z^n)
\to B_2(A_n(\Bbb Z))$. 
Moreover, for any $a,b,c,z\in A_n$, we have 
$$
[a,[b,c]z]+[b,[a,c]z]=[a[b,c],z]-[[b,c],za]+[b[a,c],z]-[[a,c],zb]=
$$
$$
=[[ab,c],z]-[[b,c],za]+[[b,[a,c]],z]-[[a,c],zb]
\in L_3,
$$
which shows that the image of $[a,[b,c]z]$ in $B_2$ 
is skewsymmetric in $a,b,c$. This implies that 
$\xi$ is a composition of a homomorphism 
$$
\eta: \Omega^1(\Bbb Z^n)\oplus \Omega^{odd,\ge 3}(\Bbb Z^n)/\Omega^{odd,\ge 3}_{ex}(\Bbb Z^n)\to B_2(A_n(\Bbb Z))
$$ 
and the map 
$v\otimes \omega\mapsto dv\wedge \omega$. 
So the theorem follows from the following lemma. 

\begin{lem}\label{le}
The image $\tilde I$ 
of $\Omega^1_{cl}(\Bbb Z^n)\oplus \Omega^{odd,\ge 3}(\Bbb Z^n)/\Omega^{odd,\ge 3}_{ex}(\Bbb Z^n)$ under $\eta$ 
coincides with the image $I$ of $\Omega^{odd,\ge 3}(\Bbb Z^n)/\Omega^{odd,\ge 3}_{ex}(\Bbb Z^n)$
under $\eta$.  
\end{lem} 

Indeed, by the results of \cite{FS}, the natural map 
$\gamma: \Omega^1(\ZZ^n)/\Omega^1_{cl}(\ZZ^n)\to B_2(A_n(\Bbb Z))/\tilde I$ is an isomorphism 
after tensoring with $\Bbb Q$. Since the group
$\Omega^1(\ZZ^n)/\Omega^1_{cl}(\ZZ^n)$ is free, $\gamma$ is injective. Hence,  
Lemma \ref{le} shows that we have an exact sequence
$$
0\to K\to \Omega^{odd,\ge 3}(\Bbb Z^n)/\Omega^{odd,\ge 3}_{ex}(\Bbb Z^n)\to B_2(A_n(\Bbb Z))\to 
\Omega^1(\Bbb Z^n)/\Omega^1_{cl}(\Bbb Z^n)\to 0,
$$
where the last nontrivial map is induced by $\gamma^{-1}$. 
So, since $\Omega^1/\Omega^1_{cl}$ is a free group, we have 
$$
B_2(A_n(\Bbb Z))\cong \Omega^1(\Bbb Z^n)/\Omega^1_{cl}(\Bbb Z^n)\oplus 
(\Omega^{odd,\ge 3}(\Bbb Z^n)/\Omega^{odd,\ge 3}_{ex}(\Bbb Z^n))/K.
$$
Moreover, as follows from the Feigin-Shoikhet results \cite{FS}, this isomorphism 
holds without $K$ after tensoring with $\Bbb Q$, which implies that 
$K$ is a torsion group. So we get 
$$
{\rm tor}_2B_2(A_n(\Bbb Z))\cong 
(\Omega^{odd,\ge 3}_{cl}(\Bbb Z^n)/\Omega^{odd,\ge 3}_{ex}(\Bbb Z^n))/K=
(H^3\oplus H^5\oplus...)/K,
$$
as desired. 

\begin{proof} (of Lemma \ref{le}) The proof is similar to the argument at the end of 
the proof of Theorem \ref{noTorsionB2}.
Namely, let $f_1dx_1+...+f_ndx_n$ be a closed 1-form, which is homogeneous of multidegree $m_1,...,m_n$.
We may assume that $m_i>0$, otherwise we are reduced to smaller $n$.  
Let $d={\rm gcd}(m_1,...,m_n)$. 
Then we can set $f_i=\frac{m_i}{d}x_i^{-1}\prod_j x_j^{m_j}$. 
Define lifts $\hat f_i$ of $f_i$ to elements in $A_n$ by the formulas
$$
\hat f_i=\sum_{j=0}^{\frac{m_i}{d}-1}x_i^jx_{i+1}^{\frac{m_{i+1}}{d}}\dots x_n^{\frac{m_n}{d}}
(x_1^{\frac{m_1}{d}}\dots x_n^{\frac{m_n}{d}})^{d-1}
x_1^{\frac{m_1}{d}}\dots x_{i-1}^{\frac{m_{i-1}}{d}}x_i^{\frac{m_i}{d}-j-1}. 
$$
It is easy to see that $\sum_i [x_i,f_i]=0$. 
This implies that $\eta(\sum_i f_idx_i)\in I$, 
as desired. 
\end{proof}

\end{proof}

\begin{rem}
Theorem \ref{upbo} implies that Conjecture \ref{b2charp} holds 
for $p=2,3$ and $n\le 4$. The proof is analogous to the proof of Theorem 
\ref{TorsionTheorem}(2).  
\end{rem}

\subsection{Proof of 2-localized versions of Conjectures \ref{torb2} and \ref{b2charp}}

\subsubsection{The statement}

\begin{thm}\label{maint}
(i) Conjecture \ref{torb2} holds over $\Bbb Z[1/2]$. 
So if ${\rm gcd}(q,r,s)=2^\ell(2k+1)$ then 
the order of $T(q,r,s)$ is divisible by $2k+1$. 

(ii) Conjecture \ref{b2charp} holds for $p>2$. 
\end{thm}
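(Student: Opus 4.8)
The plan is to reduce (ii) to (i) by a dimension count, and to prove (i) by checking that the Feigin--Shoikhet determination of $B_2(A_n)$ in characteristic $0$ is already valid over any commutative ring in which $2$ is invertible; the only denominators that ever enter are powers of $2$, coming from the Fedosov $*$-product $a*b=ab+\tfrac12\,da\wedge db$, which is exactly the device of Proposition \ref{phimap}.

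\emph{From (i) to (ii).} Since $\ZZ[1/2]$ is flat over $\ZZ$, we have $B_2(A_n(\ZZ[1/2]))=B_2(A_n(\ZZ))\otimes\ZZ[1/2]$, so (i) pins down $B_2(A_n(\ZZ))\otimes\ZZ_{(p)}$ for every odd prime $p$, and hence, by Corollary \ref{unco}(2), it pins down $B_2(A_n(\mathbb{F}_p))=B_2(A_n(\ZZ))\otimes\mathbb{F}_p$ for $p>2$. Fix a multidegree $\mathbf m=(m_1,\dots,m_n)$ with all $m_i>0$. The free rank of $B_2(A_n(\ZZ))[\mathbf m]$ is $2^{n-2}$ by \cite{FS}, and by (i) together with Corollary \ref{Hev-description} its $p$-primary torsion is the $p$-part of $\bigoplus_{k\ge1}H^{2k+1}(\Omega(\ZZ^n))[\mathbf m]=\bigoplus_{k\ge1}(\ZZ/\gcd(\mathbf m))^{\binom{n-1}{2k}}$. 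Since $\sum_{k\ge1}\binom{n-1}{2k}=2^{n-2}-1$, after $\otimes\,\mathbb{F}_p$ this torsion contributes $0$ when some $m_i$ is prime to $p$ and $2^{n-2}-1$ when $p\mid\gcd(\mathbf m)$; adding the free rank yields the two values $2^{n-2}$ and $2^{n-1}-1$ of Conjecture \ref{b2charp}.

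\emph{Proof of (i).} By Theorem \ref{upbo} and its proof there is a split short exact sequence $0\to K\to \Omega^{odd,\ge3}(\ZZ^n)/\Omega^{odd,\ge3}_{ex}(\ZZ^n)\to B_2(A_n(\ZZ))\to \Omega^1(\ZZ^n)/\Omega^1_{cl}(\ZZ^n)\to0$ in which $K$ is a torsion subgroup of $\bigoplus_{k\ge1}H^{2k+1}(\Omega(\ZZ^n))$, and Conjecture \ref{torb2} over $\ZZ[1/2]$ is precisely the statement $K\otimes\ZZ[1/2]=0$. So fix an odd prime $p$: it suffices to show the composite $\bigoplus_{k\ge1}H^{2k+1}(\Omega(\ZZ^n))\hookrightarrow\Omega^{odd,\ge3}/\Omega^{odd,\ge3}_{ex}\to B_2(A_n(\ZZ))$ is injective on $p$-primary parts. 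These classes all die in $A_n/M_3$ (the torsion of $B_2$ sits in $(L_2\cap M_3)/L_3$, which maps to $0$ under $\varphi$), so we must work in a finer quotient. Over a $\ZZ[1/2]$-algebra $R$ one extends the picture of Proposition \ref{phimap} one order deeper: a suitable quotient $\bar A_n$ of $A_n$ with $B_2(\bar A_n)=B_2(A_n)$ — e.g.\ $\bar A_n=A_n/M_4$, once one checks $L_2\cap M_4\subset L_3$ — is identified via the Fedosov $*$-product with an explicit $R$-linear quantization of a space of tensor fields built from $\Omega(R^n)$ (equivalently, one extends the Feigin--Shoikhet representation of $A_n$ on forms to the level that sees $L_2/L_3$). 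Being a flat deformation of a free $R$-module, this object is $R$-free, so its $L_2/L_3$ is computed by the same formulas as over the $\mathbb Q$-algebra $R\otimes\mathbb Q$; hence the rational Feigin--Shoikhet answer for $B_2$ holds verbatim over $R$. Taking $R=\ZZ_{(p)}$ gives $K\otimes\ZZ_{(p)}=0$ for every odd $p$, hence $K\otimes\ZZ[1/2]=0$. For the last assertion: by Theorem \ref{upbo} the torsion of $B_2(A_3(\ZZ))[q,r,s]$ is cyclic, spanned by $T(q,r,s)$, of order dividing $\gcd(q,r,s)$; since (i) shows its odd part survives, $\operatorname{ord}T(q,r,s)$ is divisible by the odd part of $\gcd(q,r,s)$.

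\emph{Main obstacle.} The crux is the deeper quantization: identifying a quotient $\bar A_n$ whose $L_2/L_3$ still equals $B_2(A_n)$, extending the isomorphism of Proposition \ref{phimap} to it with all denominators confined to powers of $2$, and verifying $R$-freeness so that the characteristic-$0$ computation transports (this is where $1/2$ is genuinely needed, since over $\mathbb{F}_2$ or $\ZZ$ the quantization and honest forms differ already in degree $2$). A more computational alternative — which also makes explicit the ``second proof'' promised in the remark after Theorem \ref{TorsionTheorem} — is to write down, for each even-size subset $S\subset\{1,\dots,n\}$ of cardinality $\ge2$ and each admissible multidegree, an explicit torsion element of $B_2(A_n(\ZZ))$ generalizing $T(s,q,r)$, and to prove its nonvanishing modulo each odd prime $p$ directly through the Fedosov $*$-product applied to a truncation of $A_n$, exactly as in the proof of Theorem \ref{TorsionTheorem}(2) but uniformly in the odd part of $\gcd$; one then checks that these classes span the $p$-primary part of $\bigoplus_{k\ge1}H^{2k+1}$, forcing $K_{(p)}=0$.
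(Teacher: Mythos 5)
Your reduction of (ii) to (i) via Corollary \ref{unco}(2) and the dimension count is correct and matches the paper. For the final assertion about $T(q,r,s)$, your deduction from Theorem \ref{upbo} plus (i) is also fine. The problem is that (i) itself is not proved — you explicitly flag this in the ``Main obstacle'' paragraph, and the gap is real, not merely a matter of polish.

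Concretely: you propose to work in a quotient $\bar A_n=A_n/M_4$ with $B_2(\bar A_n)=B_2(A_n)$, quantize it à la Fedosov, and conclude by a flatness argument. Three steps are unjustified, and the last one is actually flawed as reasoning. First, $B_2(A_n/M_4)=B_2(A_n)$ requires $L_2\cap M_4\subset L_3$, which you do not verify (it is not an obvious analogue of Lemma \ref{lem:FSLemma1}). Second, the asserted identification of $\bar A_n$ with an ``explicit $R$-linear quantization of a space of tensor fields'' is not constructed. Third, and most importantly, ``$\bar A_n$ is $R$-free, hence its $L_2/L_3$ agrees with the characteristic-zero answer'' is a non sequitur: $A_n(\ZZ)$ itself is $\ZZ$-free, yet $B_2(A_n(\ZZ))$ has torsion. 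Torsion-freeness of an algebra says nothing about torsion-freeness of subquotients like $L_2/L_3$, which is the whole phenomenon under study. So the flatness argument proves nothing.

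The paper takes a genuinely different route that supplies exactly the missing input. Starting from the exact sequence $HC_1(A)\to\wedge^2(A/(L_2+M_3))/W\to B_2(A)\to 0$ from the appendix to \cite{DKM} (Theorem 7.2), it identifies the middle term with $\Omega^{odd}/\Omega^{odd}_{ex}$ over $\ZZ[1/2]$ via pseudoregularity, computes $HC_1(A_n(\ZZ))$ by the Loday–Quillen theorem as the torsion group generated by the classes $d(a^m/m)$ for non-power cyclic words $a$, and then reduces (i) to showing that each such class maps into $\Omega^1/\Omega^1_{ex}$. That reduction becomes a concrete divisibility statement (Lemma \ref{comb}) about the signed count $N_{i_1,\dots,i_{2k+1}}(a^m)$ of odd shuffle subwords of $a^m$, proved by an elegant generating-function identity $(1+y_{j_1})\cdots(1+y_{j_M})=(1+\sum m_iy_i)\prod(1+y_{j_r}y_{j_s})$. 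None of this — the cyclic homology exact sequence, the $HC_1$ computation, the combinatorial lemma — appears in your sketch, and it is precisely the content that closes the gap you labelled the ``main obstacle.'' Your alternative suggestion (exhibit explicit generalized $T$-elements and check nonvanishing mod $p$) would, if carried out, also be a valid strategy, but as written it is just as much a to-do list as the quantization sketch.
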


The rest of the section is the proof of Theorem \ref{maint}.
In view of Corollary \ref{unco} (2), it suffices to prove (i).  
We will apply the theory from the appendix to \cite{DKM}, namely Theorem 7.2.

\subsubsection{First cyclic homology of the free algebra over the integers} 

Let $A=A_n(\Bbb Z)$. First of all we compute 
the first cyclic homology group $HC_1(A)$. 
This computation is known (due to Loday and Quillen), but we will give it 
here for the convenience of the reader. 

Recall that $HC_1(A)$ is the quotient of the kernel of the commutator map
$[,]:\wedge^2A\to A$ by the elements $ab\wedge c+bc\wedge a+ca\wedge b$, 
$a,b,c\in A$. Also recall that if $a=x_{i_1}...x_{i_N}$ is a cyclic 
word in $A/[A,A]$ (i.e., a word up to cyclic permutation) 
then we can define its noncommutative partial derivative $\partial_i a\in A$ 
by the formula 
$$
\partial_ia:=\sum_{k: i_k=i}x_{i_{k+1}}...x_{i_N}
x_{i_1}...x_{i_{k-1}}  
$$
Finally, note that for any cyclic word $a$ and any positive 
integer $m$, the partial derivative $\partial_i(a^m/m)$, 
has integer coefficients, even though $a^m/m$ does not. 

\begin{thm}\label{cychom} (\cite{LQ}, Proposition 5.4)
If $a$ is a non-power word (i.e., a nontrivial 
word that is not a power of another word) 
and $m>1$ is a positive integer, then the noncommutative differential 
$$
d(a^m/m):=\sum_i \partial_i(a^m/m)\otimes x_i 
$$
defines 
an element of $HC_1(A)$ of order $m$. Moreover, 
$HC_1(A)$ is the direct sum of the cyclic 
subgroups $\Bbb Z/m$ spanned by these elements.  
\end{thm}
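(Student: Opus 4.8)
The plan is to identify $HC_1(A)$ with the first (reduced) cyclic homology of the free algebra and then compute it by relating it to the De Rham-type complex of noncommutative differential forms modulo commutators. First I would recall that for any unital algebra $A$ one has $HC_1(A) = \Omega^1(A)_{\mathrm{cyc}}/d\Omega^0(A)_{\mathrm{cyc}}$ where $\Omega^1(A) = A \otimes \bar A$ is the bimodule of noncommutative $1$-forms, $\Omega^1(A)_{\mathrm{cyc}} = \Omega^1(A)/[A,\Omega^1(A)]$ is its cyclic quotient, and $d: A/[A,A] \to \Omega^1(A)_{\mathrm{cyc}}$ is the universal derivative; this is exactly the presentation stated before the theorem, since $\ker([,]: \wedge^2 A \to A)$ modulo the $ab\wedge c + bc \wedge a + ca\wedge b$ relations is naturally identified with $\Omega^1(A)_{\mathrm{cyc}}/d(A/[A,A])$. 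So the task reduces to: (a) describe $A/[A,A]$ as the free $\ZZ$-module on cyclic words, (b) describe $\Omega^1(A)_{\mathrm{cyc}}$, and (c) compute the cokernel of $d$.

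For the free algebra $A = A_n(\ZZ)$, step (a) is classical: $A/[A,A]$ has basis the set of cyclic words (necklaces) in $x_1,\ldots,x_n$. For step (b), I would use that $\Omega^1(A) \cong A \otimes V \otimes A$ as an $A$-bimodule (where $V$ is spanned by $x_1,\ldots,x_n$), via $a\, dx_i\, b \mapsto a \otimes x_i \otimes b$; taking the cyclic quotient gives $\Omega^1(A)_{\mathrm{cyc}} \cong (A \otimes V)/(\text{cyclic rotation})$, which has basis indexed by "marked cyclic words": a cyclic word with one distinguished letter. Then $\partial_i$ as defined in the text is precisely the $i$-th component of $d$ under this identification, and $d(a) = \sum_i \partial_i(a)\otimes x_i$. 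Step (c) is the crux. Over $\QQ$ the complex $A/[A,A] \xrightarrow{d} \Omega^1(A)_{\mathrm{cyc}}$ is acyclic in the relevant degree except for a rank count (Feigin--Tsygan / Loday--Quillen: $HC_1(A_n(\QQ))=0$ for the free algebra, reduced), so $HC_1$ is entirely torsion; the point is to pin down the torsion over $\ZZ$. For a non-power cyclic word $a$ of length $\ell$, the cyclic group $\ZZ/\ell$ acts freely on its $\ell$ rotations, and one checks directly that $d(a^m)= m \cdot d(a^m/m)$ where $d(a^m/m) := \sum_i \partial_i(a^m/m)\otimes x_i$ has integer coefficients (each monomial of $a^m$ with a marked letter appears with multiplicity divisible by... — more precisely the rotation orbit of $a^m$ has size $\ell m /\gcd$, and dividing by $m$ is integral since $a$ is a non-power). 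Then $d(a^m/m)$ is not in the image of $d$ over $\ZZ$ but $m$ times it is (namely $d(a^m)$), giving an element of order exactly $m$; and these elements, as $a$ ranges over non-power words and $m>1$, span $HC_1(A)$ freely over the appropriate cyclic quotients.

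I expect the main obstacle to be the bookkeeping in step (c): showing that the classes $[d(a^m/m)]$ are \emph{independent} and generate everything, i.e. that $HC_1(A) = \bigoplus_{a, m>1} \ZZ/m$ with no further relations. The cleanest route is probably to exploit the $\NN^n$-grading: in each multidegree $\mathbf{m}=(m_1,\ldots,m_n)$, both $A/[A,A]$ and $\Omega^1(A)_{\mathrm{cyc}}$ are finitely generated free $\ZZ$-modules, so $HC_1(A)[\mathbf m]$ is a finite abelian group whose order and structure are determined by the Smith normal form of $d$; the rank of $d$ is forced by the $\QQ$-computation ($HC_1 \otimes \QQ = 0$, so $d\otimes\QQ$ is injective), and the elementary divisors are read off from the orbit combinatorics of rotations acting on cyclic words of a given multidegree — a word $w=a^m$ with $a$ non-power contributes a factor $\ZZ/m$. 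Organizing this orbit count so that it matches $\bigoplus \ZZ/m$ precisely, with no cross terms between different base words $a$, is where care is needed; one can either cite \cite{LQ} Proposition 5.4 directly for this (as the theorem statement does) or reprove it via the above grading argument combined with Burnside-type counting of necklaces.
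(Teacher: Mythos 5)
Your proposal follows essentially the same route as the paper: both reduce the computation of $HC_1(A)$, via the Connes--Loday--Quillen sequence $HC_0\to HH_1\to HC_1\to 0$, to a multigraded analysis of the noncommutative differential $d$ acting on cyclic words, observe that $HC_1\otimes\Bbb Q=0$ forces $HC_1$ to be torsion and determined degreewise by the saturation of ${\rm Im}(d)$ inside $HH_1$, and identify the elementary divisors through the periodicity of $a^m$ (which makes $d(a^m)/m$ integral). The one step you flag as needing care --- that the classes $d(a^m/m)$ are independent and generate everything --- is exactly the step the paper dispatches with ``clearly''; the observation that closes it is that for distinct cyclic words $w$ in a fixed multidegree the vectors $d(w)\in A\otimes V$ have pairwise disjoint supports (each coordinate records a linear word with a marked letter, whose cyclic closure recovers $w$), and for $w=a^m$ with $a$ non-power all nonzero coordinates of $d(w)$ equal $m$, so the Smith normal form is immediate and $\hat J_{\bold m}/J_{\bold m}=\bigoplus_w\ZZ/m(w)$ with no cross-terms.
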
 

\begin{proof} 
We have the Connes-Loday-Quillen exact sequence (\cite{LQ}, Theorem 1.6)
$$
HC_0(A)=A/[A,A]\to HH_1(A)\to HC_1(A)\to 0.
$$
Now, since $A=TV$ is a free algebra, 
$HH_1(A)$ is the kernel of the commutator map $A\otimes V\to A$,
and the map $d: HC_0(A)\to HH_1(A)$ is
the noncommutative differential. 
In characteristic zero, the kernel of the map $d$ is just constants, 
and  $HC_1=0$. This implies that over $\Bbb Z$, $HC_1$ is a torsion group, 
and in each multidegree $\bold m$, one has 
$$
HC_1(A)[\bold m]=\hat J_{\bold m}/J_{\bold m},
$$
where $J_{\bold m}:={\rm Im}(d)[\bold m]$, and  
$\hat J_{\bold m}$ is the saturation of $J_{\bold m}$ 
inside $HH_1(A)$. Now, $\hat J_{\bold m}/J_{\bold m}$ 
is clearly a direct sum of groups $\Bbb Z/m$ generated by 
elements $d(a^m/m)$, where $a$ is a non-power word, 
which implies the statement. 
\end{proof} 

\subsubsection{Conclusion of proof of Theorem \ref{maint}} 

Now we are ready to prove Theorem \ref{maint}. 
We will work over $\Bbb Z[1/2]$ and for brevity will not 
explicitly show it in the notation. 
Recall from the appendix to \cite{DKM}, proof 
of Theorem 7.2, that we have an exact sequence 
\begin{equation}\label{exase}
HC_1(A)\to \wedge^2(A/(L_2+M_3))/W\to B_2(A)\to 0,
\end{equation}
where $W$ is spanned by the images of the 
elements $ab\wedge c+bc\wedge a+ca\wedge b$,
$a,b,c\in A$. 

Now recall that $\Omega^{ev}$ is equipped with 
the Fedosov product $a*b=ab+\frac{1}{2}da\wedge db$, 
and analogously to \cite{FS}, by Proposition \ref{phimap} 
we have an algebra isomorphism 
$\phi: A/M_3\to \Omega^{ev}_*$, 
which maps $A/(L_2+M_3)=\bar B_1(A)$ isomorphically onto 
$\Omega^{ev}/\Omega^{ev}_{ex}$.   
So, the middle term of the sequence (\ref{exase}) is 
$\wedge^2(\Omega^{ev}/\Omega^{ev}_{ex})/W$,
where $W$ is now spanned by the elements $ab\wedge c+bc\wedge a+ca\wedge b$,
$a,b,c\in \Omega^{ev}/\Omega^{ev}_{ex}$ (note that it is not important 
whether $ab$ is the usual or Fedosov product, as they differ by an exact form, 
and we are working modulo exact forms). 

Now, it is shown as in \cite{DKM}, Theorem 7.1, 
that the algebra of differential forms is pseudoregular in the sense of 
\cite{DKM}, so as a result the map 
$$
\theta: \wedge^2(\Omega^{ev}/\Omega^{ev}_{ex})/W\to \Omega^{odd}/\Omega^{odd}_{ex}
$$ 
given by $\theta(a\otimes b)=a\wedge db$ is an isomorphism. 
Thus, we have an exact sequence 
$$
HC_1(A)\to \Omega^{odd}/\Omega^{odd}_{ex}\to B_2(A)\to 0, 
$$
where the first map is defined by the formula 
$$
T=\sum_i f_i\otimes x_i\mapsto \sum_i \phi(f_i)\wedge dx_i.
$$
Our job is to show that this map lands in $\Omega^1/\Omega^1_{ex}$.

By Theorem \ref{cychom}, for this it suffices to show that 
for every cyclic word $a$ of multidegree $(m_1,...,m_n)$, 
$$
\sum_i \phi(\partial_i(a^m/m))\wedge dx_i\in \Omega^1+\Omega^{odd,\ge 3}_{ex}.
$$ 
This is equivalent to saying that the form
$$
\omega(a,m):=\sum_i (\phi(\partial_i(a^m/m))-\partial_i(a^m/m))\wedge dx_i
$$ 
belongs to $ \Omega^{odd,\ge 3}_{ex}$.
Since this form is in $\Omega^{\ge 3}$ and is clearly closed, 
it suffices to show that it represents the trivial class in the 
De Rham cohomology. But by Corollary \ref{Hev-description}, 
for this it suffices to show that $\omega(a,m)$ is divisible by $mD$, where 
$D={\rm gcd}(m_1,...,m_n)$. 

To this end, let us compute $\omega(a,m)$ explicitly. 
Suppose that $w$ be a cyclic word. By a shuffle subword of $w$ we will mean 
a cyclic word obtained by crossing out some letters from $w$. 
Let $1\le i_1<i_2<...<i_{2k+1}\le n$.  
Let $N_{i_1,...,i_{2k+1}}(w)$ be the number of 
shuffle subwords of $w$ which are even permutations of 
$x_{i_1},...,x_{i_{2k+1}}$ minus the number of shuffle subwords 
which are odd permutations. Note that since the length of the shuffle subword is odd, 
it makes sense to talk about its parity, as it does not change under cyclic permutation.

\begin{lem}\label{form}
One has 
$$
\sum_i \phi(\partial_iw)\wedge dx_i=\sum_{k\ge 0}\frac{1}{2^k}\sum_{i_1<...<i_{2k+1}}N_{i_1,...,i_{2k+1}}(w)dx_{i_1}\wedge...\wedge dx_{i_{2k+1}}.
$$
\end{lem}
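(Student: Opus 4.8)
The plan is to compute the left-hand side directly from the definition of $\phi$ and of the noncommutative partial derivatives, and then to recognize the resulting sum as a count of shuffle subwords. First I would recall from Proposition \ref{phimap}(i) that $\phi: A/M_3 \to \Omega^{ev}_*$ is an algebra isomorphism with $\phi(x_v) = x_v$, and that the Fedosov product is $a*b = ab + \frac{1}{2}\,da\wedge db$; expanding a product of $\ell$ generators $x_{j_1}*\cdots*x_{j_\ell}$ in $\Omega^{ev}_*$ gives, by an easy induction, a sum over ways of pairing up ``adjacent, contracted'' blocks — more precisely, $\phi$ applied to a monomial $x_{j_1}\cdots x_{j_\ell}$ is a sum, over subsets $S$ of positions of even size $2k$ together with a perfect matching of $S$ into $k$ ordered pairs compatible with the linear order, of $2^{-k}$ times the commutative monomial in the unmatched positions wedged with $dx$ in the matched positions. (The key algebraic input is that in $\Omega_*$, $x_a * x_b - x_b * x_a = dx_a\wedge dx_b$, which iterates to the stated shuffle expansion; this is exactly the computation underlying the Feigin–Shoikhet description, adapted to $\mathbb{Z}[1/2]$.)

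Next I would apply this to $\partial_i w$. Writing the cyclic word $w = x_{\ell_1}\cdots x_{\ell_M}$, the derivative $\partial_i w$ is the sum, over occurrences of $x_i$ in $w$, of the linear word obtained by cyclically rotating $w$ so that this occurrence is at the end and then deleting it. Wedging with $dx_i$ and summing over $i$ re-attaches a factor $dx_i$ in the final slot, so $\sum_i \phi(\partial_i w)\wedge dx_i$ becomes a sum over all occurrences $p$ of any letter in $w$, of $\phi$ of the length-$(M-1)$ word obtained by cutting $w$ at $p$, wedged with $dx_{\ell_p}$. Substituting the shuffle expansion of $\phi$ from the previous paragraph, each term is indexed by a choice of cut position $p$ together with an even-size subset of the remaining positions and a compatible matching; the factor $dx_{\ell_p}$ in the last slot makes the total number of ``$d$-slots'' odd, say $2k+1$, contributing $2^{-k}$ and the form $dx_{j_1}\wedge\cdots\wedge dx_{j_{2k+1}}$ where $j_1<\cdots<j_{2k+1}$ after reordering (the reordering introduces precisely the sign $\pm 1$ recorded by the parity of the permutation). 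The claim is then that, for fixed indices $i_1<\cdots<i_{2k+1}$, summing these signs over all $(p, S, \text{matching})$ that produce the letters $\{x_{i_1},\dots,x_{i_{2k+1}}\}$ in the $d$-slots exactly counts, with sign, the shuffle subwords of $w$ that are permutations of $x_{i_1},\dots,x_{i_{2k+1}}$ — i.e. gives $N_{i_1,\dots,i_{2k+1}}(w)$.

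The main step — and the place I expect the real work to be — is this last bookkeeping identification: one must match a ``cut position plus matched pairs'' datum to a cyclic shuffle subword and check that the Koszul-type signs coming from reordering the wedge factors in $\phi$ coincide with the parity sign in the definition of $N_{i_1,\dots,i_{2k+1}}(w)$, and also that the $2^{-k}$ bookkeeping is consistent (each unordered shuffle subword of odd length $2k+1$ arises from the matched-pair picture with the ``cut'' letter distinguished, accounting for the asymmetry between the exponent $k$ and the length $2k+1$). I would carry this out by choosing a canonical representative of each cyclic shuffle subword — e.g. rotating $w$ so that the distinguished (last-slot) letter is at the end — so that the matching on the remaining $2k$ chosen letters is forced to be ``pair them up in order'', collapsing the sum over matchings and leaving exactly one term per shuffle subword, with the correct sign. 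Once this is done, summing over $k$ and over all index tuples yields the stated formula. I would also remark that, since everything is linear in $w$, it suffices to verify the identity for a single monomial word, and that the computation is manifestly well-defined on cyclic words because the parity of an odd-length subword is rotation-invariant (as already noted in the statement).
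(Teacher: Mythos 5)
The paper's own proof is a single sentence (``This follows by direct calculation using the formula for the Fedosov product''), so your task is really to supply that calculation, and your overall plan --- expand the Fedosov product of $\phi(\partial_i w)$, wedge with $dx_i$, and reorganize the result as a signed count of shuffle subwords --- is the same as what the paper has in mind. However, the step you yourself flag as ``the place I expect the real work to be'' is a genuine gap, and it is not a small one. If you actually carry out the bookkeeping on a small example you will see the difficulty immediately. Take $w = x_1x_2x_3$. Then $\partial_1 w = x_2x_3$, $\partial_2 w = x_3x_1$, $\partial_3 w = x_1x_2$, so
\[
\sum_i \phi(\partial_i w)\wedge dx_i
= \bigl(x_2x_3 + \tfrac12 dx_2 dx_3\bigr)dx_1
+ \bigl(x_3x_1 + \tfrac12 dx_3 dx_1\bigr)dx_2
+ \bigl(x_1x_2 + \tfrac12 dx_1 dx_2\bigr)dx_3,
\]
whose degree-$3$ part is $\tfrac{3}{2}\,dx_1 dx_2 dx_3$; but there is exactly one shuffle subword of $w$ of length $3$, even, so $N_{1,2,3}(w)=1$ and the right side of the claimed identity predicts $\tfrac12\, dx_1 dx_2 dx_3$. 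The reason is that your correspondence from data $(p,S,\text{matching})$ to shuffle subwords is $(2k+1)$-to-$1$, not bijective: for a fixed set $T$ of $2k+1$ positions, each of the $2k+1$ elements of $T$ can serve as the cut position $p$, and for odd length the cyclic parity is rotation-invariant so all $2k+1$ choices contribute with the \emph{same} sign rather than cancelling. Your ``choose a canonical representative by rotating the distinguished letter to the end'' device picks out one of these $2k+1$ terms, but it does not dispose of the other $2k$ --- they are all present in $\sum_i \phi(\partial_i w)\wedge dx_i$ and they do not cancel.

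So the honest outcome of your bookkeeping is an extra factor of $2k+1$ on the right side (equivalently, $N$ should be counting position subsets together with a marked element). This is consistent with the downstream use in the paper --- Lemma~\ref{comb} gives divisibility of $N_{i_1,\dots,i_{2k+1}}(a^m)$ by $m^{k+1}D$, and multiplying by the odd integer $2k+1$ does not affect the divisibility by $mD$ that Theorem~\ref{maint} actually needs --- but it does mean the identity cannot be established in the exact form stated, and your sketch would stall precisely at the step you deferred. Two further small points: the ``sum over subsets and perfect matchings with sign'' description of $\phi(x_{j_1}\cdots x_{j_\ell})$ is correct but is itself a nontrivial inductive claim (the naive Fedosov iteration produces only a restricted family of contractions, and the passage to ``all matchings with signs'' uses a cancellation identity that you should spell out rather than assert as ``easy induction''); and you also need to record the monomial prefactor $x^{\mathbf m}/(x_{i_1}\cdots x_{i_{2k+1}})$ explicitly, since as written the two sides of the identity live in different degrees in $x$. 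I would recommend first verifying the formula with the $(2k+1)$ factor on small cyclic words, and then running the bookkeeping as a direct term-by-term match of $(p,S,\text{matching})$ triples, tracking the Koszul signs from reordering the wedge and the parity sign in $N$; the rotation-invariance of parity for odd-length subwords is exactly what makes the $2k+1$ contributions add rather than cancel.
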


\begin{proof}
This follows by direct calculation using the formula for the Fedosov product.  
\end{proof}

\begin{cor}
One has 
$$
\omega(a,m)=\frac{1}{m}\sum_{k\ge 1}\frac{1}{2^k}\sum_{i_1<...<i_{2k+1}}
N_{i_1,...,i_{2k+1}}(a^m)dx_{i_1}\wedge...\wedge dx_{i_{2k+1}}.
$$
\end{cor}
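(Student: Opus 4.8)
The plan is to deduce the formula for $\omega(a,m)$ by applying Lemma \ref{form} twice and subtracting off the rank-$1$ term, which is exactly the ``classical'' (non-Fedosov) part. Recall $\omega(a,m):=\sum_i(\phi(\partial_i(a^m/m))-\partial_i(a^m/m))\wedge dx_i$. First I would observe that $\partial_i$ and $\phi$ are both $\ZZ$-linear, and that $\partial_i(a^m/m)=\tfrac1m\partial_i(a^m)$ where now $a^m$ is an honest (integral) cyclic word and $\partial_i(a^m)$ is an honest element of $A_n$; the division by $m$ is only a global scalar and can be pulled outside everything. So $\omega(a,m)=\tfrac1m\sum_i\bigl(\phi(\partial_i(a^m))-\partial_i(a^m)\bigr)\wedge dx_i$.

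Next I would apply Lemma \ref{form} with $w=a^m$ to evaluate $\sum_i\phi(\partial_i(a^m))\wedge dx_i$, getting $\sum_{k\ge0}2^{-k}\sum_{i_1<\dots<i_{2k+1}}N_{i_1,\dots,i_{2k+1}}(a^m)\,dx_{i_1}\wedge\dots\wedge dx_{i_{2k+1}}$. Then I would compute $\sum_i\partial_i(a^m)\wedge dx_i$ on its own: this is the image of $\partial_i(a^m)\in A_n$ under $\varphi$ (reduction mod $M_3$, landing in $\Omega^{ev}$) wedged with $dx_i$, equivalently the $k=0$ term of Lemma \ref{form}. Concretely $\sum_i\varphi(\partial_i(a^m))\wedge dx_i$ picks out, for each letter of the cyclic word $a^m$, the $1$-form $dx_{i}$ with multiplicity equal to the number of occurrences of $x_i$, i.e. it equals $\sum_i N_i(a^m)\,dx_i = d(\text{the polynomial }\bar a^m/m\cdot m)$-type rank-$1$ contribution — in any case it is precisely the $k=0$ summand $\sum_{i_1}N_{i_1}(a^m)\,dx_{i_1}$ appearing in Lemma \ref{form}, since for a single index the Fedosov correction vanishes (there is no room for $da\wedge db$ in degree-$1$ output). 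Subtracting, the $k=0$ terms cancel and I am left with $\sum_i\phi(\partial_i(a^m))\wedge dx_i-\sum_i\partial_i(a^m)\wedge dx_i=\sum_{k\ge1}2^{-k}\sum_{i_1<\dots<i_{2k+1}}N_{i_1,\dots,i_{2k+1}}(a^m)\,dx_{i_1}\wedge\dots\wedge dx_{i_{2k+1}}$. Dividing by $m$ gives the claimed formula.

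The one point needing a little care — and the main (though modest) obstacle — is verifying that $\varphi(\partial_i(a^m))$, the reduction of the noncommutative word $\partial_i(a^m)$ modulo $M_3$, contributes only to the $k=0$ (rank-zero-form) part when wedged with $dx_i$ and summed, i.e. that it coincides with the $k=0$ term of Lemma \ref{form}; equivalently, that $\sum_i\varphi(\partial_i w)\wedge dx_i$ for a cyclic word $w$ equals $\sum_{i}N_i(w)dx_i$ in $\Omega^{ev}$. This is because $\varphi$ of a monomial word $x_{j_1}\cdots x_{j_\ell}$ is, up to lower-rank-form corrections coming from the $u_{ab}$'s, just the commutative monomial $x_{j_1}\cdots x_{j_\ell}$ itself in $\Omega^0$; but those lower corrections are exactly what is being tracked by the $N_{i_1,\dots,i_{2k+1}}$ with $k\ge1$ in Lemma \ref{form}, and after subtraction only the genuine rank-$0$ piece would remain in $\sum_i\partial_i(a^m)\wedge dx_i$. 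Since $\partial_i(a^m)$ is a sum of honest monomials, $\sum_i\partial_i(a^m)\wedge dx_i$ (computed in $A_n$, then reduced) is precisely the $k=0$ contribution, with no Fedosov correction, so the cancellation is clean. All of this is the ``direct calculation'' already invoked for Lemma \ref{form}, specialized; no new idea is required beyond bookkeeping of which shuffle-subword counts survive the subtraction.
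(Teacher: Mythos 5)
Your overall plan is the right one and is essentially all the paper intends: apply Lemma \ref{form} to $w=a^m$, pull out the $1/m$, and cancel the $k=0$ term against $\sum_i\partial_i(a^m/m)\wedge dx_i$. The corollary is stated without proof precisely because it is this immediate subtraction. However, the step you flag as ``the one point needing care'' is in fact wrong as you state it, and this is not a cosmetic issue.

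You read the second summand $\partial_i(a^m/m)$ in the definition of $\omega(a,m)$ as $\varphi(\partial_i(a^m/m))$ and then claim that $\sum_i\varphi(\partial_iw)\wedge dx_i=\sum_iN_i(w)\,dx_i$ equals the $k=0$ term of the lemma. Neither part of this is correct. In the line defining $\omega$, the term $\partial_i(a^m/m)$ denotes the \emph{ordinary commutative polynomial}, i.e.\ the image of the noncommutative derivative under the abelianization $A_n\to\ZZ[x_1,\dots,x_n]=\Omega^0$; that is forced by the preceding sentence, which requires $\sum_i\partial_i(a^m/m)\wedge dx_i$ to lie in $\Omega^1$. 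It is \emph{not} $\varphi(\partial_i(a^m/m))$: the map $\varphi$ applied to an unsorted noncommutative word produces nonzero rank $\geq 2$ pieces, because sorting the letters creates $u_{jk}$'s. Concretely, with $a=x_1x_2x_3$ and $m=1$ one has $\varphi(\partial_2 a)=\varphi(x_3x_1)=\varphi(x_1x_3-u_{13})=x_1x_3-dx_1\wedge dx_3$, so $\sum_i\varphi(\partial_i a)\wedge dx_i$ has a nonzero rank-$3$ component and is not the $k=0$ term; if you compute $\omega(a,1)$ with your reading you do not get the corollary's answer. Once $\partial_i(a^m/m)$ is read as the abelianized polynomial, its identification with the $k=0$ term is a one-line count ($N_i(a^m)=mm_i$, and the abelianized $\partial_i(a^m)$ is $mm_i$ times the monomial of multidegree $m\mathbf m - e_i$), and the corollary follows from Lemma \ref{form} by cancelling that term and dividing by $m$.
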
 

Therefore, the theorem follows from 
the following combinatorial lemma. 

\begin{lem}\label{comb} 
Let $a$ be a cyclic word 
of degrees $m_i$ with respect to $x_i$, and $D={\rm gcd}(m_1,...,m_n)$. 
Then for $k\ge 1$, the number 
$N_{i_1,...,i_{2k+1}}(a^m)$ is divisible by $m^{k+1}D$.  
\end{lem}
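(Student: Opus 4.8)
The plan is to split the required divisibility into two genuinely different facts: a multiplicative identity
$$
N_{i_1,\dots,i_{2k+1}}(a^m)=m^{\,k+1}\,N_{i_1,\dots,i_{2k+1}}(a),
$$
which extracts the power of $m$, and the divisibility of $N_{i_1,\dots,i_{2k+1}}(a)$ itself by $D$, which is a property of $a$ and is where I expect the real work to be.

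To prove the displayed identity, realize $a$ as a labelling of $\ZZ/L$ with $L=m_1+\dots+m_n$, so every position of $a^m$ is uniquely $p=qL+t$ with $q\in\{0,\dots,m-1\}$ and $t\in\{0,\dots,L-1\}$. A shuffle subword of $a^m$ equal to a permutation of $x_{i_1},\dots,x_{i_{2k+1}}$ is exactly the data, for each $j$, of an occurrence $t_j$ of $x_{i_j}$ inside $a$ together with a copy index $q_j$; and since $2k+1$ is odd, the sign attached to it is the sign of the permutation that sorts $(q_1L+t_1,\dots,q_{2k+1}L+t_{2k+1})$, i.e.\ that sorts by the $q$'s and breaks ties by the $t$'s (independently of where the circle is cut). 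Writing $\rho$ for the order type of $(t_1,\dots,t_{2k+1})$, this sign equals $\operatorname{sgn}(\rho)\cdot(-1)^{e(q,\rho)}$ with $e(q,\rho)=\#\{(j,\ell):\ j\ \text{precedes}\ \ell\ \text{in}\ \rho,\ q_j>q_\ell\}$. Summing over $q\in\{0,\dots,m-1\}^{2k+1}$ and relabelling coordinates by $\rho$ turns the inner sum into $\Phi(m):=\sum_q(-1)^{\operatorname{inv}(q)}$, which is therefore independent of $\rho$; hence $N_{i_1,\dots,i_{2k+1}}(a^m)=\Phi(m)\sum_{t}\operatorname{sgn}(\rho_t)=\Phi(m)\,N_{i_1,\dots,i_{2k+1}}(a)$, the last sum over the choices of an occurrence of each $x_{i_j}$ in $a$.

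It then remains to compute $\Phi(m)=\sum_{q\in\{0,\dots,m-1\}^{2k+1}}(-1)^{\operatorname{inv}(q)}$. I would group the words $q$ by content $(c_0,\dots,c_{m-1})$, $\sum c_i=2k+1$: the contribution of a fixed content is the value at $q=-1$ of the Gaussian multinomial $\binom{2k+1}{c_0,\dots,c_{m-1}}_q$, which by the standard $q=-1$ evaluation vanishes unless exactly one $c_i$ is odd — say $c_i=2d_i$ for $i\neq i_0$ and $c_{i_0}=2d_{i_0}+1$, so that $\sum d_i=k$ — in which case it equals the ordinary multinomial $\binom{k}{d_0,\dots,d_{m-1}}$. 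Summing over the $m$ choices of $i_0$ and then over $(d_i)$ by the multinomial theorem yields $\Phi(m)=\sum_{i_0=0}^{m-1}m^{k}=m^{k+1}$. (Alternatively one derives a recursion for $\Phi$ in $m$ and inducts.) Combined with the identity this gives the $m^{k+1}$ part of the lemma.

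Finally, one must show $D\mid N_{i_1,\dots,i_{2k+1}}(a)$. Since such a shuffle subword involves only the positions of $x_{i_1},\dots,x_{i_{2k+1}}$, deleting all other letters of $a$ leaves $N_{i_1,\dots,i_{2k+1}}(a)$ unchanged, so we may assume $a$ is a cyclic word in exactly the $2k+1$ letters $x_{i_1},\dots,x_{i_{2k+1}}$, whose multiplicities have gcd a multiple of $D$. This is the step I expect to be the main obstacle: the analysis above produces the power of $m$ but not the $D$, and the naive idea of peeling off a perfect-power factor of $a$ fails precisely for words like $x_{i_1}^{D}\cdots x_{i_{2k+1}}^{D}$ that are not powers. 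The route I would pursue is to exhibit $N_{i_1,\dots,i_{2k+1}}(a)$ (up to a power of $2$, harmless over $\ZZ[1/2]$) as the value of a natural pairing factoring through $H^{2k+1}\bigl(\Omega(\ZZ^{2k+1})\bigr)[m_{i_1},\dots,m_{i_{2k+1}}]$, which by Corollary~\ref{Hev-description} is annihilated by $\gcd(m_{i_1},\dots,m_{i_{2k+1}})$ and hence by $D$; concretely this means showing that in top cohomological degree the closed form $\sum_\nu\phi(\partial_\nu a)\wedge dx_\nu$ differs from an exact form by something divisible by that gcd. The easy congruence $N_{i_1,\dots,i_{2k+1}}(a)\equiv\prod_j m_{i_j}\pmod 2$ already disposes of the $2$-part when $D$ is even, which makes a prime-by-prime reduction of this last step look feasible.
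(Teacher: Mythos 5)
Your decomposition of the claim into the multiplicative identity $N_{i_1,\dots,i_{2k+1}}(a^m)=m^{k+1}N_{i_1,\dots,i_{2k+1}}(a)$ together with the separate divisibility $D\mid N_{i_1,\dots,i_{2k+1}}(a)$ is a valid reformulation, and your proof of the first part via the lexicographic sign factorization and the $q=-1$ evaluation of Gaussian multinomials is correct, though considerably longer than necessary. However, the second part is not proved: the final paragraph is an honest admission that you do not have an argument, only a proposed route through De~Rham cohomology that you describe as ``feasible.'' That is a genuine gap. Moreover, your diagnosis that the $D$-divisibility is ``where the real work'' lies is mistaken, and the cohomological detour you sketch is both unnecessary and awkwardly close to circular in spirit, since the lemma is exactly the ingredient being fed into the cohomological argument of Theorem~\ref{maint}.

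The paper kills both divisibilities in one step by a short exterior-algebra computation. Introduce anticommuting variables $y_1,\dots,y_n$, write $a=x_{j_1}\cdots x_{j_M}$, and observe that $N_{i_1,\dots,i_{2k+1}}(a^m)$ is the coefficient of $y_{i_1}\cdots y_{i_{2k+1}}$ in $Y(a,m):=\bigl((1+y_{j_1})\cdots(1+y_{j_M})\bigr)^m$. The key identity, proved by an easy induction on $M$ using $y_v^2=0$, is
\[(1+y_{j_1})\cdots(1+y_{j_M})=\Bigl(1+\sum_i m_i y_i\Bigr)\prod_{1\le r<s\le M}(1+y_{j_r}y_{j_s}).\]
Since $\bigl(\sum m_i y_i\bigr)^2=0$ and $(y_{j_r}y_{j_s})^2=0$, raising to the $m$-th power gives
\[Y(a,m)=\Bigl(1+m\sum_i m_i y_i\Bigr)\prod_{1\le r<s\le M}(1+m\,y_{j_r}y_{j_s}).\]
To produce a monomial $y_{i_1}\cdots y_{i_{2k+1}}$ of odd degree one must take exactly one factor $m\,m_{i_0}y_{i_0}$ from the first bracket and $k$ quadratic factors $m\,y_{j_r}y_{j_s}$ from the product, so every contributing term is divisible by $m^{k+1}\,m_{i_0}$, and the total coefficient is a $\ZZ$-linear combination of $m^{k+1}m_{i_0}$ over $i_0\in\{i_1,\dots,i_{2k+1}\}$, hence divisible by $m^{k+1}\gcd(m_{i_1},\dots,m_{i_{2k+1}})$, which in turn is divisible by $m^{k+1}D$. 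In particular the $D$-divisibility of $N_{i_1,\dots,i_{2k+1}}(a)$ (your second, unfinished step) is the special case $m=1$ of this same computation and requires no cohomology at all. Incidentally, the identity also makes your first step transparent, since comparing $Y(a,m)$ with $Y(a,1)$ replaces $m_{i_0}$ by $m\,m_{i_0}$ and each quadratic factor by $m$ times itself, giving $N(a^m)=m^{k+1}N(a)$ without any Gaussian multinomial bookkeeping. I would recommend replacing your $\Phi(m)$ calculation with this identity and, of course, supplying the missing proof of $D\mid N(a)$, which the identity gives you for free.
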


\begin{proof}
Let $y_i$, $i=1,...,n$ 
be anticommuting variables (i.e., $y_iy_j=-y_jy_i$ and $y_i^2=0$).
Suppose that $a=x_{j_1}...x_{j_M}$ (where $M=\sum m_i$), 
and consider the product $Y(a,m)=((1+y_{j_1})...(1+y_{j_M}))^m$. 
It is easy to see that $N_{i_1,...,i_{2k+1}}(a^m)$ 
is the coefficient of $y_{i_1}...y_{i_{2k+1}}$ in $Y(a,m)$. 
However, it is easily shown by induction that 
$$
(1+y_{j_1})...(1+y_{j_M})=(1+m_1y_1+...+m_ny_n)\prod_{1\le r<s\le n}(1+y_{j_r}y_{j_s}). 
$$
This implies that 
$$
Y(a,m)=(1+mm_1y_1+...+mm_ny_n)\prod_{1\le r<s\le n}(1+my_{j_r}y_{j_s})
$$
and the statement follows. 
\end{proof} 

The theorem is proved.  

\section{Experimental data and conjectures}

\subsection{Experimental data}
\label{exp-data-I}
In this subsection we summarize the experimental data obtained by direct computation in MAGMA.

In the tables below, multi-degree components are greater than or equal to $1$ since cases with one degree being $0$ reduce to a smaller number of variables.  Due to the $S_n$ action permuting generators, we list only multi-degrees with weakly descending entries.  Note that we are asserting the torsion subgroups are trivial for omitted degrees in each specified range.



\begin{table}[htpb]
\caption{Torsion in $B_\ell(A_2(\ZZ))$ in degrees $(i,j)$, with $i\geq j$, such that $2 \le \ell \le i+j \le 12$.}
\begin{center}
    \begin{tabular}{ | l | | l | l | l | l |}
    \hline
     $(i,j) \backslash \ell$: & 5 & 6 & 7 & 9\\ \hline
     (4,4) &$\ZZ/2$&&&\\  \hline
     (6,4) &$\ZZ/2$&&$\ZZ/2$&\\ \hline
     (6,6) &$\ZZ/2$&$\ZZ/3 $ & $(\ZZ/2)^3$& $(\ZZ/2)^3$\\  \hline
     (8,4) &$\ZZ/2$ && $\ZZ/2$&$(\ZZ/2)^2$\\  \hline
     \end{tabular}
\end{center}
\label{torsiontwo}
\end{table}

\begin{rem}
Note that we did not discover torsion in $B_2(A_2(\ZZ))$. This lack of torsion is proved in Theorem \ref{noTorsionB2}.
\end{rem}



\begin{table}[htpb]
\caption{Torsion in $B_\ell(A_3(\ZZ))$ in degrees $(i,j,k)$, with $i\geq j \geq k$, such that $2 \le \ell \le i+j+k$ and either $i,j,k \le 3$ or $j,k \le 4$, $i \le 2$.}
\begin{center}
    \begin{tabular}{ | l | | l | l | l | l | l |}
    \hline
      $(i,j,k) \backslash \ell$: & 2 & 3 & 5 & 7 \\ \hline
     (2,2,2) & $\ZZ/2$&&& \\ \hline
     (3,3,3) &$\ZZ/3 $ & $\ZZ/3$&&\\ \hline
    (4,2,2) &$\ZZ/2$ && $(\ZZ/2)^2$& \\ \hline
     (4,4,2) &$ \ZZ/2$&&$(\ZZ/2)^5$&$(\ZZ/2)^5$ \\ \hline
      \end{tabular}
 \end{center}
 \label{torsionthree}
 \end{table}
 
 

\begin{table}[htpb]
\caption{Torsion in $B_\ell(A_4(\ZZ))$ in degrees $(i,j,k,l)$, with $i\geq j \geq k \geq l$, such that $2 \le \ell \le i+j+k+l$ and $i,j,k,l \le 2$.}
\begin{center}
    \begin{tabular}{ | l | | l |  l |}
    \hline
     $(i,j,k,l) \backslash \ell$ & 2 &5 \\ \hline
     (2,2,2,2) &  $(\ZZ/2)^3$ & $(\ZZ/2)^5$ \\ \hline 
      \end{tabular}
 \end{center}
  \label{torsionfour}
 \end{table}

\subsubsection{Discrepancies between ranks of $B_\ell$ in characteristic zero and positive characteristic}\label{discr}

The difference
$D=\dim B_\ell(A_3(\mathbb{\Bbb F}_2))-\dim B_\ell(A_3(\mathbb{Q}))$ in degrees $(i,j,k)$ 
was calculated using MAGMA in degrees up to $(3,3,3)$ and $(4,2,2)$ 
and was found to be nonzero in the following cases:

\[
\begin{array}{|c|c|c|} \hline
\ell & (i,j,k) & D \\ \hline \hline
2 & (2, 2, 2) &1 \\ \hline
2 & (4, 2, 2) &1 \\ \hline
4 & (2, 2, 2) &-1 \\ \hline
4 & (4, 2, 2) &-1 \\ \hline
5 & (4, 2, 2) &2 \\ \hline
6 & (4, 2, 2) &-2 \\ \hline
\end{array}
\]

The difference
$D=\dim B_\ell(A_3(\mathbb{\Bbb F}_3))-\dim B_\ell(A_3(\mathbb{Q}))$ in degrees $(i,j,k)$ 
calculated using MAGMA in degrees up to $(3,3,3)$ and $(4,2,2)$ 
and was found to be nonzero in the following cases:

\[
\begin{array}{|c|c|c|} \hline
\ell & (i,j,k) & D \\ \hline \hline
2 & (3, 3, 3)& 1 \\ \hline
5 & (3, 3, 3) &-1 \\ \hline
\end{array}
\]
 
\subsection{Conjectures}
 
The theoretical results of this paper and the computational results in the previous subsection motivate the following conjectures.
 
\begin{conj}
If a torsion element $T$ in $B_\ell(A_n(\ZZ))$ has degree $m$ with respect to some generator $x_j$, then the order of $T$ divides $m$.
\end{conj}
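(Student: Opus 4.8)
The plan is to reduce the conjecture to a purely integral statement about the lower central series filtration of $A_n$, and then to approach that statement by generalizing the two mechanisms that handle the cases $\ell=1,2$: the commutator identity behind Theorem \ref{TorsionTheorem}, and the cyclic-homology computation behind Theorem \ref{maint}.

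For the reduction, decompose $T$ into its multihomogeneous components; each is torsion of order dividing the order of $T$ and has the same degree in $x_j$ as $T$, so we may assume $T\in L_\ell(A_n(\ZZ))$ is homogeneous of multidegree $\mathbf m=(m_1,\dots,m_n)$, and it suffices to show $m_jT=0$ in $B_\ell(A_n(\ZZ))$ for every $j$ (then the order of $T$ divides $\gcd(m_1,\dots,m_n)$, consistent with the fact that the Euler derivation $E_j$ with $E_j(x_i)=\delta_{ij}x_j$ preserves every $L_k$ over any base ring and acts by the scalar $m_j$ on multidegree $\mathbf m$). Since $T$ is torsion in $B_\ell$, some $NT$ lies in $L_{\ell+1}(A_n(\ZZ))$, and as $L_{\ell+1}(A_n(\mathbb{Q}))=L_{\ell+1}(A_n(\ZZ))\otimes_{\ZZ}\mathbb{Q}$ this forces $T\in L_{\ell+1}(A_n(\mathbb{Q}))\cap A_n(\ZZ)$. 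Hence the conjecture is equivalent to the integrality statement that the graded group $Q_{\ell+1}:=\bigl(L_{\ell+1}(A_n(\mathbb{Q}))\cap A_n(\ZZ)\bigr)\big/L_{\ell+1}(A_n(\ZZ))$, which contains $\mathrm{tor}\,B_\ell(A_n(\ZZ))$ as a subgroup, is annihilated by $m_j$ in each multidegree $\mathbf m$ — equivalently, that an element of $A_n(\ZZ)$ which is an $(\ell+1)$-fold commutator over $\mathbb{Q}$ becomes one over $\ZZ$ after multiplication by its degree in any single variable. (The reformulation via $E_j$ has no content by itself — it merely identifies the relevant scalar; the work is to produce the integral witness.) For $\ell=1$ this is immediate, since $B_1(A_n(\ZZ))$ is free on cyclic words; for $\ell=2$ it is exactly Theorem \ref{maint} (over $\ZZ[1/2]$) together with Conjecture \ref{torb2}.

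There are two natural ways to attack the integrality statement for general $\ell$. The first is an explicit commutator manipulation extracting the factor $m_j$, in the style of Theorem \ref{TorsionTheorem}(1), where the $y$-degree $r$ is produced by the congruence $[x,y^r]\equiv r\,y^{r-1}[x,y]$ modulo $M_3$ — i.e. by Lemma \ref{lem:FSLemma1} — applied after identity (\ref{ide}). Carrying this out for general $\ell$ would require higher analogues of Lemma \ref{lem:FSLemma1} describing how an $\ell$-fold commutator degenerates modulo $L_{\ell+1}$ when one of its arguments is a large power of a monomial (some statement of the shape $[A_n,M_{\ell+1}]\subset L_{\ell+1}$, or a refinement of it). The second approach is homological: build a functorial complex of free $\ZZ$-modules computing $B_\ell(A_n(\ZZ))$, extending the Hochschild/cyclic resolution and the pseudoregularity of $\Omega(\ZZ^n)$ used for $B_2$ in \cite{LQ}, \cite{DKM}, identify $\mathrm{tor}\,B_\ell$ as a subquotient of its cohomology, and show that the resulting integer ``shuffle-subword'' counts are divisible by the variable degrees, exactly as divisibility by $\gcd(m_1,\dots,m_n)$ was obtained in the proof of Lemma \ref{comb}. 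I expect the main obstacle to be common to both routes: the combinatorics of iterated commutators of length $\ge 3$ modulo $L_{\ell+1}$ has so far resisted analysis — even the Hilbert series of $B_\ell(A_n(\mathbb{Q}))$ is unknown for $\ell\ge 3$ — so neither the requisite higher identities nor an integral homological model for $B_\ell(A_n)$ is currently available, and it is not even clear over which ring the ``clear denominators'' step should be carried out.

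A third, more elementary line is induction on $\ell$ based on the fact that $\bar B(A_n)$ is generated in degree $1$, so that $B_\ell(A_n(\ZZ))$ is spanned by iterated brackets of elements of $\bar B_1(A_n(\ZZ))$, whose torsion is completely described by Proposition \ref{b1an}, together with the identity $B_2=\sum_i[x_i,\bar B_1]$ from Lemma \ref{torLemma1}. One would try to show that forming such a bracket one factor at a time can only lower $\gcd(m_1,\dots,m_n)$ and never raises the exponent of the torsion subgroup. The difficulty — and I expect this to be the crux of this approach — is that torsion is badly non-monotone along the lower central series: $B_2(A_2(\ZZ))$ is torsion-free even though $\bar B_1(A_2(\ZZ))$ is almost entirely torsion (Theorem \ref{noTorsionB2}), so bracketing destroys a great deal of torsion, and one would need precise control over which classes survive — essentially over the kernel of $[\bar b,-]$ acting on $B_{\ell-1}$ for $\bar b\in\bar B_1$ — for the induction to close.
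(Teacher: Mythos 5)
The statement you were given is labeled a \emph{conjecture} in the paper, and the paper offers no proof of it; the only thing the paper says about it is in Section~7 (Further Work), where it is observed that the (also conjectural) triviality of the $W_n$-action on the torsion in $B_i(A_n)$ would imply it, by applying the Euler vector field $x_j\partial/\partial x_j$. You have correctly recognized that the conjecture is open and have not pretended otherwise, which is the right call. Your opening reduction is sound as a one-way implication: annihilation of $Q_{\ell+1}=\bigl(L_{\ell+1}(\mathbb{Q})\cap A_n(\ZZ)\bigr)/L_{\ell+1}(\ZZ)$ by $m_j$ implies the conjecture because ${\rm tor}\,B_\ell$ sits inside $Q_{\ell+1}$; but calling the two ``equivalent'' overreaches slightly, since $Q_{\ell+1}$ can be strictly larger than ${\rm tor}\,B_\ell$ whenever $L_\ell(\ZZ)$ fails to be saturated (i.e.\ whenever $B_{\ell-1}$ itself has torsion), and iterating your argument from the conjecture only gives $m_j^{\ell}T\in L_{\ell+1}(\ZZ)$, not $m_jT$. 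Your remark that the Euler derivation observation ``has no content by itself'' is exactly the right reading of the paper's Section~7 remark: preservation of each $L_k$ by $E_j$ and the scalar $m_j$ are automatic, and what is missing is precisely the claim that $E_j$ acts trivially on the torsion.

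Your survey of the three possible routes --- a higher analogue of the commutator identities behind Theorem~\ref{TorsionTheorem}(1) and Lemma~\ref{lem:FSLemma1}, a homological model generalizing the $HC_1$ computation and pseudoregularity used in Theorem~\ref{maint}, and an induction through $\bar B(A_n)$ being generated in degree $1$ --- accurately reflects the state of the art and identifies the real obstructions (no description of $B_\ell(A_n(\mathbb{Q}))$ for $\ell\ge 3$, no integral homological model, non-monotonicity of torsion along the bracket as exhibited by Theorem~\ref{noTorsionB2}). Since the paper itself has no proof, there is nothing to compare against; just be careful to replace ``equivalent'' with ``implied by'' in the $Q_{\ell+1}$ reformulation, and note that the cleanest formulation of the target --- which the paper itself singles out --- is the $W_n$-triviality conjecture, of which the divisibility statement is the $x_j\partial/\partial x_j$ consequence.
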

 
\begin{conj}
There is no torsion in $B_\ell(A_n(\ZZ))[m_1, \ldots , m_n]$ unless $\ell \le m_1+ \cdots + m_n-3$.
\end{conj}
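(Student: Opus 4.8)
The plan is to reduce the conjecture to the assertion that $B_\ell(A_n(\ZZ))[m_1,\dots,m_n]$ is torsion-free whenever the total degree $m:=m_1+\cdots+m_n$ satisfies $m\le\ell+2$, and to prove this by induction on $\ell$, handling the three codegrees $m-\ell\in\{0,1,2\}$ simultaneously. The codegree-$0$ case is clean: since $L_{\ell+1}(A_n)$ is supported in degrees $\ge\ell+1$, one has $B_\ell(A_n)[\ell]=L_\ell(A_n)[\ell]$, and because $L_\ell=[A_n,L_{\ell-1}]$ with $L_{\ell-1}$ supported in degrees $\ge\ell-1$, the only degree-$\ell$ contribution is $[A_n[1],L_{\ell-1}[\ell-1]]=[V,\mathcal L_{\ell-1}]$, where $V=\ZZ^n$ and $\mathcal L$ is the free Lie ring on $x_1,\dots,x_n$; by induction $L_{\ell-1}[\ell-1]=\mathcal L_{\ell-1}$, so $B_\ell[\ell]=[V,\mathcal L_{\ell-1}]=\mathcal L_\ell$, which is a free abelian group.

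For codegrees $1$ and $2$ I would use that $\bar B(A_n)$ is generated in degree $1$ by $\bar B_1=\Omega^{ev}/\Omega^{ev}_{ex}$, so that for $\ell\ge 3$, $B_\ell[\ell+c]=\sum_{a=1}^{c+1}[\bar B_1[a],B_{\ell-1}[\ell+c-a]]$, and $B_2[2+c]=\sum_{a=1}^{c+1}[\bar B_1[a],\bar B_1[2+c-a]]$. By Theorem \ref{barB1thm} and Corollary \ref{Hev-description}, the torsion of $\bar B_1$ is $H^{ev,+}(\Omega(\ZZ^n))$, and $H^{2k}(\Omega(\ZZ^n))$ is supported in multidegrees with at least $2k$ positive entries sharing a common factor $\ge 2$, hence in total degree $\ge 4k\ge 4$; thus $\bar B_1[a]$ is torsion-free for $a\le 3$. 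Since every $B_{\ell-1}[\ell+c-a]$ occurring above has codegree $\le c\le 2$, the inductive hypothesis makes it torsion-free too, and $B_\ell[\ell+c]$ is exhibited as a quotient of a finitely generated torsion-free abelian group.

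This is not yet enough — a quotient of a torsion-free group may acquire torsion — so the next step is to show the submodule of relations is $\ZZ$-saturated in this range, equivalently that $\dim_{\mathbb F_p}(B_\ell(A_n(\ZZ))\otimes\mathbb F_p)[\ell+c]=\dim_{\mathbb Q}B_\ell(A_n(\mathbb Q))[\ell+c]$ for every prime $p$ (together with the identification of $B_\ell(A_n(\ZZ))\otimes\mathbb F_p$ with $B_\ell(A_n(\mathbb F_p))$ in these degrees). I would try to obtain this by extending the presentation of $B_2$ from the appendix of \cite{DKM} — the exact sequence (\ref{exase}) — to a presentation of $B_\ell$ valid in low codegree: express $B_\ell[\ell+c]$ as the cokernel of an explicit, $\pm1$-coefficient relation map (in the spirit of the Feigin--Shoikhet relations of Proposition \ref{relaa}) out of iterated brackets of differential forms, modulo the image of a cyclic or Hochschild homology term, and then observe — just as in the K\"unneth computation of $H^*(\Omega(\ZZ^n))$ in Corollary \ref{kunnethcor} — that the $\mathrm{Tor}$ and cohomology groups that could carry torsion all vanish in total degree $\le\ell+2$ by degree-counting. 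A variant would be to produce an explicit $\ZZ$-basis of $B_\ell$ in low codegree, analogous to the basis of $A_n/M_3$ in Proposition \ref{anm3}.

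The hard part is precisely this last step for $\ell\ge 3$: no presentation of $B_\ell(A_n)$ comparable to that of $B_2$ is known, and even the rational Hilbert series of $B_\ell(A_n(\mathbb Q))$ is open, so $\dim_{\mathbb Q}B_\ell(A_n(\mathbb Q))[\ell+c]$ is not available uniformly in $\ell$. A realistic mechanism would be a Koszul-type bicomplex or spectral sequence converging to $\mathrm{gr}(A_n)$ for the filtration by the ideals $M_k=A_nL_k$, whose low-codegree terms are governed by the commutative de Rham data of $\ZZ^n$ (cf. Proposition \ref{phimap}); one would hope to read off that the only possible torsion in $B_\ell$ comes from $H^{\ge r}(\Omega(\ZZ^n))$ with $r$ increasing in $\ell$, and such groups are supported in total degree $\ge 2r$, eventually far above $\ell+2$. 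Making such a spectral sequence degenerate integrally in the needed range, without spurious torsion entering through relations among relations, is where the real difficulty lies; in particular, the variable-specialization degenerations used elsewhere in the paper preserve $\ell$ and so cannot drive this induction.
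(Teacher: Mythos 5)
First, the statement you are trying to prove is presented in the paper as a \emph{conjecture}, not a theorem; the authors offer no proof of it. What they do prove, in Remark~\ref{weakerst} immediately following, is the strictly weaker bound $\ell\le m_1+\cdots+m_n-2$, i.e.\ freeness of $B_\ell[\bold m]$ in codegrees $0$ and $1$. Your proposal also stops short of a full proof and says so explicitly, so there is no false claim here — but it is worth comparing the two partial arguments in the range where the paper does have one.

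Your codegree-$0$ argument is essentially the paper's: $B_\ell[\ell]=L_\ell[\ell]$ is the degree-$\ell$ graded component of the free Lie ring on $x_1,\dots,x_n$ over $\ZZ$, which is free abelian. For codegree $1$, however, you miss the paper's cleaner mechanism. The authors note that $L_\ell[\ell]$, being a graded component of the free Lie ring sitting inside its universal enveloping algebra $A_n(\ZZ)$ (integral PBW), is a \emph{saturated} subgroup of $A_n[\ell]$. Consequently $A_n[\ell]/L_\ell[\ell]$ is free, and therefore so is its subgroup $B_{\ell-1}[\ell]=L_{\ell-1}[\ell]/L_\ell[\ell]$ — precisely codegree $1$ for $B_{\ell-1}$. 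Your treatment of codegrees $1$ and $2$ instead exhibits $B_\ell[\ell+c]$ as a quotient of a torsion-free group built from $\bar B_1[a]\otimes B_{\ell-1}[\ell+c-a]$; as you correctly observe, that controls nothing about torsion in the quotient, so your argument as stated does not even recover the codegree-$1$ case that the paper settles. You should borrow the saturation trick for that step.

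For codegree $2$ — the genuinely new content of the conjecture — the saturation argument does not iterate: knowing $L_\ell[\ell]$ is saturated in $A_n[\ell]$ says nothing about saturation of $L_{\ell-1}[\ell]$ in $A_n[\ell]$, which is what one would need to descend another step. You correctly identify this as the obstruction. Your suggestions — a presentation of $B_\ell$ in low codegree analogous to the $B_2$ presentation from the appendix of \cite{DKM}, or a Koszul-type spectral sequence with torsion controlled by $H^{\ge r}(\Omega(\ZZ^n))$ — are plausible lines of attack, but nothing of the sort appears in the paper, and no such presentation of $B_\ell(A_n)$ for $\ell\ge 3$ is known even over $\mathbb{Q}$. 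So the honest summary is: codegree $0$ matches the paper, codegree $1$ you could close by adding the saturation argument, and codegree $2$ remains open for both you and the authors.
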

 
\begin{rem}\label{weakerst}
One can prove a weaker statement that there is no torsion unless $ \ell \le m_1+ \cdots + m_n-2$. Indeed, it is clear that $B_\ell[m_1,...,m_n]$ has no torsion if $m_1+...+m_n=\ell$ because $L_{\ell+1}[m_1,...,m_n]=0$ in this case. Also, in this case $L_\ell[m_1,...,m_n]$ is a saturated subgroup in the free algebra $A_n(\ZZ)$ (i.e., if $jx\in L_\ell$  for a positive integer $j$, then $x\in L_\ell$) since this is a graded component of the free Lie algebra in $n$ generators over $\ZZ$ inside its universal enveloping algebra, which is $A_n(\ZZ)$. This means that $B_{\ell-1}[m_1,...,m_n]$ is also free in this case. 
 \end{rem}

Also we propose the following questions: 

{\bf Question 1.} Suppose that $\ell>2$ and $n$ are fixed. 
Can there exist $p$-torsion 
in $B_\ell(A_n(\Bbb Z))$ for arbitrarily large $p$? 
What if $\ell,n$ are allowed to vary? 
(In the computer search, we only found $2$-torsion 
and $3$-torsion). 
 
{\bf Question 2.} 
Does $2$-torsion in $B_\ell(A_2(\ZZ))$ where $\ell>2$ occur only for odd $\ell$?

\section{The supercase}\label{supercase}

In this section we consider the super-extension of the preceding results, in the 
style of the paper \cite{BJ}. Namely, let $A_{n,k}(R)$ 
be the free algebra generated over a commutative ring $R$ 
by $n$ even generators $x_1,...,x_n$ and $k$ odd generators, $y_1,...,y_k$. 
This is really the same algebra as $A_{n+k}(R)$; the only thing that changes is the notion of 
the commutator. Namely, for two words $a,b\in A_{n,k}$, 
we define $[a,b]=ab-(-1)^{d_ad_b}ba$, where $d_a$ is the number of odd 
generators $y_j$ in the word $a$. Then we define the modules
$L_i$ and $B_i$ in the same way as in the usual case. 

The structure of $\bar B_1(A_{n,k})$ and $B_i(A_{n,k})$ over $\Bbb Q$ was studied in \cite{BJ}. 
In particular, the structure of $\bar B_1$, $B_2$, and $B_3$ was completely computed. 
Here we provide some results and data and formulate a number of questions regarding the structure of
$\bar B_1(A_{n,k})$ and $B_i(A_{n,k})$ over $\Bbb Z$. 

\subsection{Experimental data}

Tables 4-8 list the torsion subgroup of $B_\ell(A_{m,n})(\ZZ)$ for small values of $\ell,m,n$.  Note that, as in Section \ref{exp-data-I}, we are asserting these subgroups to be trivial for all omitted degrees in the described range.


\begin{table}[h!]
\caption{Torsion in $B_\ell(A_{1,1}(\mathbb Z))$ in degrees $(r,s)$, where $r$ is the degree of the even generator and $s$ is the degree of the odd generator, such that $2 \leq \ell \leq r+s \leq 11$, $r,s\leq 9$, excepting $(r,s)=(8,3)$.}
$ \begin{array}{ |c||c|c|c|c|c|c|c|c |  } \hline
   (r,s) \backslash \ell &2&3&4&5&6&7&8&9 \\ \hline
(2,2)& \ZZ/ 2 &&&&&&& \\ \hline
(3,3)&&&\ZZ/2&&&&& \\ \hline
(4,2)&\ZZ/2&&\ZZ/2&&&&& \\ \hline
(3,4)&&&\ZZ/2&\ZZ/2&&&& \\ \hline
(4,3)&&&\ZZ/2&\ZZ/2&&&& \\ \hline
(2,6)&\ZZ/2&&\ZZ/2&&\ZZ/2&&&\\ \hline
(3,5)&&&\ZZ/2&\ZZ/2&(\ZZ/2)^2&&& \\ \hline
(4,4)&\ZZ/2&&\ZZ/2&(\ZZ/2)^2&(\ZZ/2)^3&&& \\ \hline
(5,3)&&&\ZZ/2&\ZZ/2&(\ZZ/2)^2&&& \\ \hline
(6,2)&\ZZ/2&&\ZZ/2&&\ZZ/2&&& \\ \hline
(3,6)&&&\ZZ/2&\ZZ/2&(\ZZ/2)^3&(\ZZ/2)^2&&\\ \hline
(4,5)&&&\ZZ/2&(\ZZ/2)^2&(\ZZ/2)^7&(\ZZ/2)^4&& \\ \hline
(5,4)&&&\ZZ/2&(\ZZ/2)^2&(\ZZ/2)^6&(\ZZ/2)^4&& \\ \hline
(6,3)&&&\ZZ/2&\ZZ/2&(\ZZ/2)^2&\ZZ/2&& \\ \hline
(3,7)&&&\ZZ/2&\ZZ/2&(\ZZ/2)^3&(\ZZ/2)^3&(\ZZ/2)^3& \\ \hline
(4,6)&\ZZ/2&&(\ZZ/2)^2&(\ZZ/2)^2&(\ZZ/2)^{10}&(\ZZ/2)^{10}&(\ZZ/2)^8 &\\ \hline
(5,5)&&&\ZZ/2&(\ZZ/2)^2&(\ZZ/2)^{11}&(\ZZ/2)^{13}&(\ZZ/2)^9& \\ \hline
(6,4)&&&\ZZ/2&(\ZZ/2)^2&(\ZZ/2)^6&(\ZZ/2)^8&(\ZZ/2)^6& \\  \hline
(7,3)&&&\ZZ/2&\ZZ/2&(\ZZ/2)^2&(\ZZ/2)^2&(\ZZ/2)^3&\\ \hline
(8,2)&\ZZ/2&&\ZZ/2&&\ZZ/2&&\ZZ/2 &\\ \hline
(3,8)&&&\ZZ/2&\ZZ/2&(\ZZ/2)^3&(\ZZ/2)^3&(\ZZ/2)^5&(\ZZ/2)^3\\ \hline
(4,7)&&&\ZZ/2&(\ZZ/2)^2&(\ZZ/2)^8&(\ZZ/2)^{11}&(\ZZ/2)^{19}&(\ZZ/2)^9\\ \hline
(5,6)&&&\ZZ/2&(\ZZ/2)^2&(\ZZ/2)^{12}&(\ZZ/2)^{20}&(\ZZ/2)^{31}&(\ZZ/2)^{14}\\ \hline
(6,5)&&&\ZZ/2&(\ZZ/2)^2&(\ZZ/2)^{11}&(\ZZ/2)^{18}&(\ZZ/2)^{27}&(\ZZ/2)^{13}\\ \hline
(7,4)&&&\ZZ/2&(\ZZ/2)^2&(\ZZ/2)^6&(\ZZ/2)^8&(\ZZ/2)^{13}&(\ZZ/2)^8\\\hline
     \end{array}$
\end{table}

   
\begin{table}[h!]
\caption{Torsion in $B_\ell(A_{0,2}(\mathbb Z))$ in degrees $(s_1,s_2)$, $s_1\ge s_2$, such that $2 \leq \ell \leq s_1+s_2 \leq 11$, $2\leq s_1,s_2\leq 9$.}
$ \begin{array}{ |c||c|c|c|c|c|c|c|c|  } \hline
     (s_1,s_2) \backslash \ell &2&3&4&5&6&7&8&9 \\ \hline
(3,3)&&&\ZZ/2&&&&& \\ \hline
(4,2)&\ZZ/2&&\ZZ/2&&&&& \\ \hline
(4,3)&&&\ZZ/2&\ZZ/2&&&& \\ \hline
(4,4)&&&\ZZ/2&(\ZZ/2)^2&(\ZZ/2)^3&&& \\ \hline
(5,3)&&&\ZZ/2&\ZZ/2&(\ZZ/2)^2&&& \\ \hline
(5,4)&&&\ZZ/2&(\ZZ/2)^2&(\ZZ/2)^7&(\ZZ/2)^4&& \\ \hline
(6,3)&&&\ZZ/2&\ZZ/2&(\ZZ/2)^3&(\ZZ/2)^2&& \\ \hline
(5,5)&&&\ZZ/2&(\ZZ/2)^2&(\ZZ/2)^{12}&(\ZZ/2)^{14}&(\ZZ/2)^{10}& \\ \hline
(6,4)&\ZZ/2&&(\ZZ/2)^2&(\ZZ/2)^2&(\ZZ/2)^{10}&(\ZZ/2)^{10}&(\ZZ/2)^8& \\ \hline
(7,3)&&&\ZZ/2&\ZZ/2&(\ZZ/2)^3&(\ZZ/2)^3&(\ZZ/2)^3& \\ \hline
(8,2)&\ZZ/2 &&\ZZ/2 && \ZZ/2 && \ZZ/2&\\ \hline
(6,5)&&&\ZZ/2&(\ZZ/2)^2&(\ZZ/2)^{12}&(\ZZ/2)^{21}& (\ZZ/2)^{33}&(\ZZ/2)^{15}\\ \hline
(7,4)&&&\ZZ/2&(\ZZ/2)^2&(\ZZ/2)^8&(\ZZ/2)^{11}&(\ZZ/2)^{19}&(\ZZ/2)^9\\ \hline
(8,3)&&&\ZZ/2&\ZZ/2&(\ZZ/2)^3&(\ZZ/2)^3&(\ZZ/2)^5&(\ZZ/2)^3\\ \hline

\end{array}
$\end{table}

\begin{table}[h!]
\caption{Torsion in $B_\ell(A_{2,1}(\mathbb Z))$ in degrees $(r_1,r_2,s)$, $r_1\ge r_2$, such that either $r_1,r_2,s\leq 3$ or $r_1\leq 4, r_2\leq 3, s\leq 2$.}
$\begin{array}{|c||c|c|c|c|c|c|} \hline
(r_1,r_2,s) \backslash \ell &2&3&4&5&6&7 \\ \hline
(2,1,3)&&&\ZZ/2&&& \\ \hline
(2,2,2)&(\ZZ/2)^2&&(\ZZ/2)^2&&& \\ \hline
(3,1,2)&&&\ZZ/2&&& \\ \hline
(2,2,3)&&&(\ZZ/2)^4&(\ZZ/2)^2&& \\ \hline
(3,1,3)&&&(\ZZ/2)^3&(\ZZ/2)^2&& \\ \hline
(3,2,2)&&&(\ZZ/2)^2&\ZZ/2&& \\ \hline
(4,1,2)&&&\ZZ/2&\ZZ/2&& \\ \hline
(3,2,3)&&&(\ZZ/2)^7&(\ZZ/2)^8&(\ZZ/2)^{10}& \\ \hline
(3,3,2)&&&(\ZZ/2)^4&(\ZZ/2)^3&(\ZZ/2)^5& \\ \hline
(4,2,2)&(\ZZ/2)^2&&(\ZZ/2)^4&(\ZZ/2)^3&(\ZZ/2)^5& \\ \hline
(3,3,3)&&\ZZ/3&(\ZZ/2)^{10}&(\ZZ/2)^{18}&(\ZZ/2)^{40}&(\ZZ/2)^{20}+\ZZ/3 \\ \hline
(4,3,2)&&&(\ZZ/2)^4&(\ZZ/2)^6&(\ZZ/2)^{12}&(\ZZ/2)^7 \\ \hline
\end{array}$
\end{table}


\begin{table}[h!]
\caption{torsion in $B_\ell(A_{1,2}(\mathbb Z))$ in degrees $(r,s_1,s_2)$, $s_1\ge s_2$, such that either $r,s_1,s_2\leq 3$ or $r \leq 4, s_1\leq 3, s_2 \leq 2$.}
$\begin{array}{|c||c|c|c|c|c|c|} \hline
(r,s_1,s_2) \backslash \ell &2&3&4&5&6&7 \\ \hline
(1,2,2)&&\ZZ/3&&&& \\ \hline
(1,3,2)&&\ZZ/3&\ZZ/2+\ZZ/3&&& \\ \hline
(2,2,2)&\ZZ/2&\ZZ/3&\ZZ/2+\ZZ/3&&& \\ \hline
(2,3,1)&&&\ZZ/2&&& \\ \hline
(3,2,1)&&&\ZZ/2&&& \\ \hline
(1,3,3)&&\ZZ/3&(\ZZ/2)^3+(\ZZ/3)^2&(\ZZ/2)^2+\ZZ/3&& \\ \hline
(2,3,2)&&\ZZ/3&(\ZZ/2)^4+(\ZZ/3)^2&(\ZZ/2)^2+\ZZ/3&& \\ \hline
(3,2,2)&&\ZZ/3&(\ZZ/2)^3+\ZZ/3&(\ZZ/2)^2+\ZZ/3&& \\ \hline
(3,3,1)&&&(\ZZ/2)^3&(\ZZ/2)^2&& \\ \hline
(4,2,1)&&&\ZZ/2&\ZZ/2&& \\ \hline
(2,3,3)&&\ZZ/3&(\ZZ/2)^8+(\ZZ/3)^3&(\ZZ/2)^{11}+(\ZZ/3)^3&(\ZZ/2)^{12}+(\ZZ/3)^3& \\ \hline
(3,3,2)&&\ZZ/3&(\ZZ/2)^7+(\ZZ/3)^2&(\ZZ/2)^{10}+(\ZZ/3)^2&(\ZZ/2)^{11}+(\ZZ/3)^2& \\ \hline
(4,2,2)&\ZZ/2&\ZZ/3&(\ZZ/2)^3+\ZZ/3&(\ZZ/2)^5+\ZZ/3&(\ZZ/2)^6+(\ZZ/3)^2& \\ \hline
(4,3,1)&&&(\ZZ/2)^3&(\ZZ/2)^5&(\ZZ/2)^6& \\ \hline
(3,3,3)&&&(\ZZ/2)^{11}+(\ZZ/3)^3&(\ZZ/2)^{25}+(\ZZ/3)^4&(\ZZ/2)^{60}+(\ZZ/3)^7&(\ZZ/2)^{28}+(\ZZ/3)^3 \\ \hline
(4,3,2)&&\ZZ/3&(\ZZ/2)^7+(\ZZ/3)^2&(\ZZ/2)^{15}+(\ZZ/3)^2&(\ZZ/2)^{38}+(\ZZ/3)^4&(\ZZ/2)^{19}+(\ZZ/3)^2 \\ \hline
\end{array}$
\end{table}

\begin{table}[h!]
\caption{Torsion in $B_\ell(A_{0,3}(\mathbb Z))$ in degrees $\mathbf{s}=(s_1,s_2,s_3)$, , such that either $2 \leq \ell \leq s_1+s_2+s_3\leq 10$ and  $s_1,s_2,s_3\leq 8$, or $\mathbf{s}=(7,3,1)$.}
{\tiny$
\begin{array}{|c||c|c|c|c|c|c|c|c|} \hline
\mathbf{s} \backslash \ell &2&3&4&5&6&7&8&9 \\ \hline
(2,2,1)&&\ZZ/3&&&&&& \\ \hline
(2,2,2)&(\ZZ/2)^2&(\ZZ/3)^2&(\ZZ/2)^2+(\ZZ/3)^2&&&&& \\ \hline
(3,2,1)&&\ZZ/3&\ZZ/2+\ZZ/3&&&&& \\ \hline
(3,2,2)&&\ZZ/3&(\ZZ/2)^4+(\ZZ/3)^4&(\ZZ/2)^2+(\ZZ/3)^2&&&& \\ \hline
(3,3,1)&&\ZZ/3&(\ZZ/2)^3+(\ZZ/3)^2&(\ZZ/2)^2+\ZZ/3&&&& \\ \hline
(4,2,1)&&&(\ZZ/2)^2+\ZZ/3&\ZZ/2&&&& \\ \hline
(3,3,2)&&&(\ZZ/2)^8+(\ZZ/3)^4&(\ZZ/2)^{10}+(\ZZ/3)^6&(\ZZ/2)^{11}+(\ZZ/3)^4&&& \\ \hline
(4,2,2)&\ZZ/2&&(\ZZ/2)^6+(\ZZ/3)^2&(\ZZ/2)^7+(\ZZ/3)^3&(\ZZ/2)^8+(\ZZ/3)^2&&& \\ \hline
(4,3,1)&&&(\ZZ/2)^4+\ZZ/3&(\ZZ/2)^6+\ZZ/3&(\ZZ/2)^7+\ZZ/3&&& \\ \hline
(5,2,1)&&&(\ZZ/2)^2&(\ZZ/2)^2&(\ZZ/2)^3&&&\\ \hline
(3,3,3)&&\ZZ/3&(\ZZ/2)^{12}+\ZZ/3&(\ZZ/2)^{25}+(\ZZ/3)^{10}&(\ZZ/2)^{64}+(\ZZ/3)^{18}&(\ZZ/2)^{28}+(\ZZ/3)^6&& \\ \hline
(4,3,2)&&&(\ZZ/2)^9+\ZZ/3&(\ZZ/2)^{19}+(\ZZ/3)^5&(\ZZ/2)^{50}+(\ZZ/3)^{10}&(\ZZ/2)^{22}+(\ZZ/3)^3&&\\ \hline
(4,4,1)&&&(\ZZ/2)^4&(\ZZ/2)^{10}+\ZZ/3&(\ZZ/2)^{29}+(\ZZ/3)^2&(\ZZ/2)^{14}+\ZZ/3&&\\ \hline
(5,2,2)&&&(\ZZ/2)^6&(\ZZ/2)^{10}+\ZZ/3&(\ZZ/2)^{28}+(\ZZ/3)^3&(\ZZ/2)^{12}&&\\ \hline
(5,3,1)&&&(\ZZ/2)^4&(\ZZ/2)^7&(\ZZ/2)^{21}+\ZZ/3&(\ZZ/2)^{10}&&\\ \hline
(6,2,1)&&&(\ZZ/2)^2&(\ZZ/2)^2&(\ZZ/2)^6&(\ZZ/2)^3&&\\ \hline
(5,3,2)&&&(\ZZ/2)^9&(\ZZ/2)^{21}+\ZZ/3&(\ZZ/2)^{90}+(\ZZ/3)^7&(\ZZ/2)^{94}+(\ZZ/3)^4&(\ZZ/2)^{62}+(\ZZ/3)^3&\\ \hline
(5,4,1)&&&(\ZZ/2)^4&(\ZZ/2)^{10}&(\ZZ/2)^{48}+\ZZ/3&(\ZZ/2)^{55}+\ZZ/3&(\ZZ/2)^{36}+\ZZ/3&\\ \hline
(6,2,2)&(\ZZ/2)^2&&(\ZZ/2)^8&(\ZZ/2)^{10}&(\ZZ/2)^{41}+\ZZ/3&(\ZZ/2)^{39}&(\ZZ/2)^{30}&\\ \hline
(6,3,1)&&&(\ZZ/2)^4&(\ZZ/2)^7&(\ZZ/2)^{27}&(\ZZ/2)^{29}&(\ZZ/2)^{21} &\\ \hline
(7,2,1)&&&(\ZZ/2)^2&(\ZZ/2)^2&(\ZZ/2)^6&(\ZZ/2)^6&(\ZZ/2)^6&\\ \hline
(7,3,1)&&&(\ZZ/2)^4&(\ZZ/2)^7&(\ZZ/2)^{27}&(\ZZ/2)^{36}&(\ZZ/2)^{63}& (\ZZ/2)^{27}\\\hline
\end{array}
$}
\end{table}


\subsection{General statements and questions} 

\subsubsection{Torsion in $\bar B_1(A_{n,k}(\Bbb Z))$}

\begin{thm}\label{b1barsup}
Let $k\ge 1$, $n+k\ge 2$, and $m_1,...,m_{n+k}>0$. Then 
the torsion in $\bar B_1(A_{n,k}(\Bbb Z))[\bold m]$
is $(\Bbb Z/2)^{2^{n+k-2}}$ if all $m_i$ are even, and zero 
otherwise. 
\end{thm}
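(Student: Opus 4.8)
The plan is to reduce, exactly as in Section 3, to a cohomology computation, and then to run the inductive K\"unneth argument of Corollaries \ref{kunnethcor}--\ref{Hev-description} with one new one-variable input coming from an odd generator.

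Write $z_1,\dots,z_N$ ($N=n+k$) for all the generators, with $z_i=x_i$ even for $i\le n$ and $z_{n+j}=y_j$ odd. Following \cite{BJ} and the proofs of Propositions \ref{relaa}, \ref{coro} and Theorem \ref{thm:Thm1}, one sets up the super analogs of those statements. The one structural novelty is that for an odd generator $u_{jj}:=[y_j,y_j]=2y_j^2$ is a \emph{nonzero} relation in $A_{n,k}/M_3$, whereas $u_{ii}=[x_i,x_i]=0$. Carrying this through yields an algebra isomorphism $A_{n,k}/M_3\cong\widetilde\Omega^{ev}(R^{n|k})$, where $\widetilde\Omega(R^{n|k})$ is the DG algebra on generators $z_a$ and $dz_a$ with relations $[z_v,z_u]=\pm(dz_v)(dz_u)$, $[z_v,dz_u]=0$, differential determined by $z_a\mapsto dz_a$, and where ``$ev$'' means the number of $dz$'s is even. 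Using the super version of Lemma \ref{degg1} one checks that $[a,z_i]$ corresponds to $\pm(da)(dz_i)$, so $L_2(A_{n,k}/M_3)$ is carried onto $\widetilde\Omega^{ev}_{ex}(R^{n|k})$; hence
\[ \bar B_1(A_{n,k}(R))=\widetilde\Omega^{ev}(R^{n|k})/\widetilde\Omega^{ev}_{ex}(R^{n|k}). \]
Over $R=\ZZ$, since $d$ interchanges the $ev$ and $odd$ parts, $\widetilde\Omega^{ev}/\widetilde\Omega^{ev}_{cl}$ embeds into the free abelian group $\widetilde\Omega^{odd}$ and is hence free, so, exactly as in Corollary \ref{deco} and Theorem \ref{barB1thm}, ${\rm tor}\,\bar B_1(A_{n,k}(\ZZ))[\mathbf m]$ equals the torsion of $H^{ev}(\widetilde\Omega(\ZZ^{n|k}))[\mathbf m]$.

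It remains to compute $H^{ev}(\widetilde\Omega(\ZZ^{n|k}))$ by induction on $N$. For an even generator the building block is $\widetilde\Omega(\ZZ^{1|0})=\Omega(\ZZ)$, with cohomology from Lemma \ref{omega1}. For an odd generator it is $\widetilde\Omega(\ZZ^{0|1})\cong\ZZ[y,\eta]/(\eta^2-2y^2)$, with $y$ odd, $\eta=dy$ even and central, $dy=\eta$; since $d$ is an odd derivation one computes $d(y^{2k})=0$, $d(y^{2k+1})=\eta y^{2k}$ and $d(\eta y^{m-1})=\eta^2 y^{m-2}=2y^m$, so in degree $m$ the two-dimensional complex $\langle y^m,\eta y^{m-1}\rangle$ has cohomology $\ZZ/2$ (generated by $y^m$, in the $ev$-part, the $2$ coming from $d(\eta y^{m-1})=2y^m$) when $m$ is even and positive, $0$ when $m$ is odd, and $\ZZ$ when $m=0$. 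Feeding these two inputs into the (split) K\"unneth short exact sequence, the super analog of Corollary \ref{kunnethcor}, and adjoining one generator at a time as in the proof of Corollary \ref{Hev-description}: an even generator of degree $m$ reduces the cohomology mod $m$, while an odd generator of degree $m$ reduces it mod $\gcd(m,2)$ and, for $m$ odd, kills the $ev$-cohomology entirely; the identity $\binom{N-1}{i-1}=\binom{N-2}{i-2}+\binom{N-2}{i-1}$ reappears as a doubling of the rank at each step. One obtains, for all $m_i>0$,
\[ H^{ev}(\widetilde\Omega(\ZZ^{n|k}))[\mathbf m]\cong(\ZZ/g)^{2^{N-2}}, \]
where $g$ is the gcd of the $m_i$ for even-indexed $i$ together with the $\gcd(m_j,2)$ for odd-indexed $j$. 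As $k\ge1$ there is an odd generator, so $g\mid 2$ and $g=2$ exactly when every $m_i$ is even; this is the theorem.

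I expect the main obstacle to be the first paragraph: setting up the integral DG model $\widetilde\Omega(R^{n|k})$ and proving the super analog of Theorem \ref{thm:Thm1} while correctly tracking the relation $u_{jj}=2y_j^2$, which is the sole source of the $2$-torsion and is invisible both over $\mathbb{Q}$ and over $\mathbb{F}_2$ (where the super and ordinary commutators agree). Note that, unlike the even case, the naive forms on the supermanifold $\mathbb{A}^{n|k}$ are \emph{not} the right comparison object even over $\mathbb{Q}$ --- already $\bar B_1(A_{0,1}(\mathbb{Q}))$ is spanned by $1$ and the odd powers of $y$, not by $1$ and $y$ --- so one really works with $\widetilde\Omega$; and the K\"unneth step, which morally is the tensor decomposition $\widetilde\Omega(\ZZ^{n|k})=\bigotimes^n\widetilde\Omega(\ZZ^{1|0})\otimes\bigotimes^k\widetilde\Omega(\ZZ^{0|1})$, needs justification, since the cross-relations $[z_a,z_b]=\pm(dz_a)(dz_b)$ obstruct a literal tensor product of DG algebras --- either a filtration/spectral-sequence argument (whose only nonzero higher differential is the mod-$2$ reduction above) or an identification with a tensor product of Chevalley--Eilenberg-type complexes. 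As a shortcut for the final count, Lemma \ref{lem:Universal} lets one avoid the cohomology computation altogether: $\bar B_1(A_{n,k}(\mathbb{F}_2))=\bar B_1(A_{n+k}(\mathbb{F}_2))$ has the dimension given by Corollary \ref{b1anp}, and subtracting the $\mathbb{Q}$-dimension computed in \cite{BJ} yields the rank $2^{N-2}$, once one knows the torsion is elementary $2$-torsion with no odd part.
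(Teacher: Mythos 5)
Your route is genuinely different from the paper's, and with more work it might be made to go through, but as written it leans on two steps that are real obstacles, not routine verifications. You model $\bar B_1(A_{n,k}(\ZZ))$ by the cohomology of the quantized complex $\widetilde\Omega(\ZZ^{n|k})$, so that the relation $\eta_j^2=2y_j^2$ produces the $2$-torsion \emph{inside} the complex. The paper instead works with the ordinary super differential forms $\Omega(\ZZ^{n|k})$ via a non-multiplicative linear isomorphism $\varphi:A_{n,k}/M_3\to\Omega^{\mathrm{ev}}(\ZZ^{n|k})$, and observes that $\varphi(L_2)$ is strictly contained in $\Omega^{\mathrm{ev},+}_{\mathrm{ex}}$ but contains $2\Omega^{\mathrm{ev},+}_{\mathrm{ex}}$. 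This yields a short exact sequence $0\to K\to\bar B_1\to\Omega^{\mathrm{ev}}/\Omega^{\mathrm{ev}}_{\mathrm{ex}}\to 0$ with $K$ manifestly an $\mathbb{F}_2$-vector space; since the integral Poincar\'e lemma holds for odd variables, $\Omega^{\mathrm{ev}}/\Omega^{\mathrm{ev}}_{\mathrm{ex}}$ is free in positive multidegree, the sequence splits, ${\rm tor}\,\bar B_1=K$, and $\dim K$ is found by exactly the dimension count you call the ``shortcut.'' The paper's route buys two things at once: the only cohomological input it needs is a vanishing statement for $\Omega(\ZZ^{n|k})$, which is an honest tensor product of one-variable De~Rham complexes (so K\"unneth applies with no justification needed), and the statement that the torsion is elementary abelian $2$-torsion is a structural consequence of $2\Omega^{\mathrm{ev},+}_{\mathrm{ex}}\subset\varphi(L_2)$ rather than an extra input.

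The gaps in your version are where you yourself flag them, and they are not small. First, the K\"unneth step for $H(\widetilde\Omega(\ZZ^{n|k}))$: because of the cross-relations $[z_a,z_b]=\pm\,dz_a\,dz_b$ and, for odd $z_a$, $\eta_a^2=2z_a^2$, the module $\widetilde\Omega(\ZZ^{n|k})$ is not a tensor product of one-variable complexes, nor is it graded by form degree (only $\ZZ/2$-graded and filtered, with associated graded $\Omega$); a filtration spectral sequence would have to be set up, its differentials computed, and its extension problems resolved, and the assertion that ``the only nonzero higher differential is the mod-$2$ reduction'' is a guess, not a proof. Second, the identification $\bar B_1=\widetilde\Omega^{\mathrm{ev}}/\widetilde\Omega^{\mathrm{ev}}_{\mathrm{ex}}$ requires a super version of Theorem~\ref{thm:Thm1} built on the algebra isomorphism $A_{n,k}/M_3\cong\widetilde\Omega^{\mathrm{ev}}$ rather than on the non-multiplicative $\varphi$; the needed identity $[a,z_i]=\pm(da)(dz_i)$ inside $\widetilde\Omega$ over $\ZZ$ must be verified with care about signs and is not a formal consequence of the even-variable case. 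Third, your shortcut computes only $\dim_{\mathbb{F}_2}({\rm tor}\otimes\mathbb{F}_2)$; to conclude it requires knowing in advance that the torsion is a vector space over $\mathbb{F}_2$ with no odd part, which is exactly the structural fact the paper proves and your proposal leaves open.
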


\begin{rem}
We see that the behavior of torsion is different from the case $k=0$, when there is torsion of all orders. 
\end{rem}

\begin{proof}
Let $x_1,...,x_n,y_1,...,y_k$ be the even and odd generators, respectively, 
and let $\Omega(\Bbb Z^{n|k})$ be the algebra of differential forms on these variables
over $\Bbb Z$ (so the elements $x_i$ and $dy_j$ are even, while $dx_i$ and $y_j$ are odd).
Then similarly to Section 3, we have a linear isomorphism 
$\varphi: A_{n,k}(\Bbb Z)/M_3\to \Omega(\Bbb Z^{n|k})^{ev}$, which maps 
$L_2$ into exact forms of positive degree. However, the image
is not the entire set of exact positive forms; for instance, 
if $y$ is an odd variable, then $(dy)^{2r}=d(y(dy)^{2r-1})$ is an exact form but 
is not in the image of $L_2$. On the other hand, $2(dy)^{2r}$ is  
$\varphi([y,y^{2r-1}])=\varphi(2y^r)$. More generally, it is easy to show 
as in Section 3 that if $\omega$ is a positive exact form then $2\omega\in \varphi(L_2)$.  
This shows that we have an exact sequence 
$$
0\to K\to \bar B_1(A_{n,k}(\Bbb Z))\to \Omega^{ev}/\Omega^{ev}_{ex}\to 0,
$$
where $K$ is a vector space over $\Bbb F_2$. 

Now, as in the even case, we have 
$$
(\Omega^{ev}/\Omega^{ev}_{ex})[m_1,...,m_{n+k}]=
(\Omega^{ev}/\Omega^{ev,+}_{cl})[m_1,...,m_{n+k}]\oplus H^{ev,+}(\Bbb Z^{n|k})[m_1,...,m_{n+k}],
$$
where the first summand is a free group and the second summand 
is torsion. Furthermore, it is easy to show that for positive $m$, 
$H^i(\Bbb Z^{0|1})[m]=0$ (as the Poincar\'e lemma for odd variables holds over $\Bbb Z$), 
so using the K\"unneth formula as in the even case, 
we see that for $k>0$ we have $H^{ev,+}(\Bbb Z^{n|k})[m_1,...,m_{n+k}]=0$.
This means that the group $(\Omega^{ev}/\Omega^{ev}_{ex})[m_1,...,m_{n+k}]$ is free, and hence
the above exact sequence splits. 

It remains to compute the dimension of $K$. To do so, note that 
$\bar B_1(A_{n,k}(\Bbb Z))\otimes \Bbb F_2=\bar B_1(A_{n,k}(\Bbb F_2))$.
On the other hand, $\bar B_1(A_{n,k}(\Bbb F_2))=\bar B_1(A_{n+k}(\Bbb F_2))$ 
(the super-structure does not matter in characteristic $2$), 
so the dimension of the right hand side in multidegree $(m_1,...,m_{n+k})$ 
is known from Section 3. Finally, the rank of the free part $\bar B_1(A_{n,k}(\Bbb Z))[\bold m]$ is known from 
\cite{BJ}. Subtracting the two, we get the statement. 
\end{proof}

\subsubsection{Torsion in $B_1(A_{n,k}(\Bbb Z))$}

In the case $k>0$, unlike the purely even case $k=0$, the group 
$B_1(A_{n,k}(\Bbb Z))$ has torsion. Namely, recall that 
$B_1(A_n(\Bbb Z))$ has a basis consisting of cyclic words, i.e., words in the generators 
up to cyclic permutation. This remains true for $A_{n,k}$, except that 
some words turn out to be equal to minus themselves, thus being 2-torsion. 
It is clear that such words are exactly even powers of odd words. 
This implies the following result. 

\begin{prop}\label{torb1}
The group ${\rm tor}B_1(A_{n,k}(\Bbb Z))$ is a vector space over $\Bbb F_2$, whose multivariable Hilbert series 
is given by the formula 
$$
h_{n,k}(t_1,...,t_n,u_1,...,u_k)=
$$
$$
\sum_{\bold m: m_{n+1}+...+m_{n+k}\text{ is odd}} a_{m_1,...,m_{n+k}}
\frac{t_1^{2m_1}...t_n^{2m_n}u_1^{2m_{n+1}}...u_k^{2m_{n+k}}}
{1-t_1^{2m_1}...t_n^{2m_n}u_1^{2m_{n+1}}...u_k^{2m_{n+k}}},
$$
where $f(z):=\sum a_{m_1,...,m_{n+k}} z_1^{m_1}...z_{n+k}^{m_{n+k}}$
is the Hilbert series of the free Lie algebra on $n+k$ generators:
$$
f(z)=\sum_{d\ge 1}\frac{\mu(d)}{d}\log(1-z_1^d-...-z_{n+k}^d),
$$
where $\mu(d)$ is the M\"obius function. 

In particular, torsion in multidegree $\bold m$ is nonzero if and only if  
all $m_i$ are even, and \linebreak $(m_{n+1}+...+m_{n+k})/{\rm gcd}(\bold m)$ is odd. 
\end{prop}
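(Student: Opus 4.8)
The plan is to first pin down the $\ZZ$-module structure of $B_1(A_{n,k}(\ZZ))=A_{n,k}/[A_{n,k},A_{n,k}]$ and then translate the assertion into a purely combinatorial count of cyclic words.

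\textbf{Step 1: the module structure of $B_1$.} I would begin by making precise the structural remark preceding the Proposition. Write $A=A_{n,k}(\ZZ)$ with its $\ZZ$-basis of words, and group the words into cyclic-rotation classes $C$. Each defining generator $ab-(-1)^{d_ad_b}ba$ of $[A,A]$ relates two words lying in a common class, so $A=\bigoplus_C \ZZ C$, $[A,A]=\bigoplus_C\big([A,A]\cap\ZZ C\big)$, and hence $B_1=\bigoplus_C \ZZ C/\big([A,A]\cap\ZZ C\big)$. If a class $C$ consists of the cyclic rotations of $u^m$ with $u$ a primitive word (not a proper power), then $C$ has exactly $|u|$ elements, and in the quotient they are all identified up to sign with a single generator; the sign accrued on going once around the class is the sign of cyclically moving one copy of $u$ past the remaining $u^{m-1}$, namely $(-1)^{d_u d_{u^{m-1}}}=(-1)^{(m-1)d_u}$ (using $d_u^2\equiv d_u\bmod 2$). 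Thus the summand is $\ZZ$ when $(m-1)d_u$ is even and $\ZZ/2$ when $d_u$ is odd and $m$ is even. Consequently ${\rm tor}\,B_1(A_{n,k}(\ZZ))$ is an $\mathbb{F}_2$-vector space with basis the cyclic words of the form $v^{2j}$, $v$ a primitive \emph{odd} word (odd number of odd letters) and $j\ge 1$.

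\textbf{Step 2: reduce the Hilbert series to necklace counting.} Fix a multidegree $\mathbf p$. By uniqueness of the primitive root, a basis element $v^{2j}$ of ${\rm tor}\,B_1[\mathbf p]$ determines $j\ge1$ and $v$ uniquely, and $v$ has multidegree $\mathbf p/(2j)$, which forces $2j\mid p_i$ for all $i$. Hence
\[
\dim_{\mathbb{F}_2}{\rm tor}\,B_1(A_{n,k}(\ZZ))[\mathbf p]=\sum_{j\ge 1,\ 2j\mid p_1,\dots,2j\mid p_{n+k}} N_{\rm odd}\!\big(\mathbf p/(2j)\big),
\]
where $N_{\rm odd}(\mathbf q)$ counts the primitive \emph{odd} cyclic words of multidegree $\mathbf q$. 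If $q_{n+1}+\cdots+q_{n+k}$ is odd, every cyclic word of multidegree $\mathbf q$ has an odd number of odd letters, so $N_{\rm odd}(\mathbf q)$ equals the number of aperiodic necklaces of content $\mathbf q$, which by the classical Witt/necklace identity is $a_{\mathbf q}=\tfrac1{|\mathbf q|}\sum_{e\mid\gcd(\mathbf q)}\mu(e)\binom{|\mathbf q|/e}{\mathbf q/e}$, the multigraded dimension of the free Lie algebra; if $q_{n+1}+\cdots+q_{n+k}$ is even, $N_{\rm odd}(\mathbf q)=0$. On the other hand, expanding $z/(1-z)=\sum_{j\ge1}z^j$, the coefficient of the monomial of multidegree $\mathbf p$ in the claimed $h_{n,k}$ is $\sum a_{\mathbf m}$ over pairs $(j,\mathbf m)$ with $j\ge1$, $2j\mathbf m=\mathbf p$ and $m_{n+1}+\cdots+m_{n+k}$ odd; for each $j$ there is at most one such $\mathbf m$, namely $\mathbf p/(2j)$ (present exactly when $2j\mid p_i$ for all $i$), so the two series coincide.

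\textbf{Step 3: the divisibility criterion and the main obstacle.} From Step 2, ${\rm tor}\,B_1[\mathbf p]\neq 0$ iff there is $j\ge1$ with $2j\mid\gcd(\mathbf p)$, with $a_{\mathbf p/(2j)}\neq 0$, and with $(p_{n+1}+\cdots+p_{n+k})/(2j)$ odd. Put $g=\gcd(\mathbf p)$ and $S=p_{n+1}+\cdots+p_{n+k}$. Such a $j$ forces $g$ even, i.e.\ all $p_i$ even; and writing $S/g=(S/(2j))/(g/(2j))$, the oddness of $S/(2j)$ yields the oddness of $S/g$. Conversely, if all $p_i$ are even and $S/g$ is odd, take $2j=g$: then $a_{\mathbf p/g}\neq 0$ because $\mathbf p/g$ is a nonzero multidegree with $\gcd=1$ (if it has a single nonzero coordinate that coordinate is $1$, and otherwise $a>0$ by freeness of the Lie algebra), and $S/(2j)=S/g$ is odd. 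This is exactly the stated criterion. The argument is essentially bookkeeping and no step is deep; the one genuinely external input is the classical fact that the number of aperiodic necklaces of prescribed content equals $a_{\mathbf q}$ (Witt's formula via Möbius inversion), and the place that needs care is Step 1 — getting the rotation sign $(-1)^{(m-1)d_u}$ right, and verifying that the decomposition of the pair $(A,[A,A])$ over cyclic classes is a genuine direct-sum decomposition, so that each class contributes an honest $\ZZ$ or $\ZZ/2$ rather than merely a spanning set.
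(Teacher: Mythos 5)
Your proof is correct and follows essentially the same route as the paper's: identify the $2$-torsion in $B_1$ as arising from even powers $v^{2j}$ of primitive (non-power) cyclic words $v$ of odd total degree in the odd generators, then count these via Witt's necklace formula for the Hilbert series of the free Lie algebra. The only difference is one of emphasis: you spell out the direct-sum decomposition of $(A,[A,A])$ over cyclic-rotation classes and verify the rotation sign cocycle $(-1)^{(m-1)d_u}$ (which the paper leaves implicit, merely exhibiting $2z^{2l}=[z,z^{2l-1}]$), whereas the paper instead supplies a proof of the necklace/Witt identity that you cite as classical.
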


\begin{ex}\label{twovar} Let $\nu(m)$ be the maximal power of $2$ dividing $m$. 
For $n=0$, $k=2$, the torsion in $B_1$ in bidegree $(m_1,m_2)$ is nonzero 
iff $m_1$ and $m_2$ are even, and $\nu(m_1)\ne \nu(m_2)$. For $n=1$ and $k=1$, 
the torsion in $B_1$ in bidegree $(m_1,m_2)$ is nonzero 
iff $m_1$ and $m_2$ are even, and $\nu(m_1)\ge \nu(m_2)$. 
\end{ex}

\begin{proof}
It is clear that any nontrivial word is a power of a non-power word (i.e. a word which is not a power of another word) 
in a unique way, and words generating 2-torsion are exactly the even powers of non-power words of odd
total degree with respect to the odd generators (if $z$ is such a non-power word, then 
$2z^{2l}=[z,z^{2l-1}]$).  

Also, it is well known that the generating function for non-power words modulo cyclic permutation
is the same as the Hilbert series of the free Lie algebra. 
Indeed, if the generating function for non-power words modulo cyclic permutation
is $f(v)=\sum a_{\bold m}v^{\bold m}$, where $v^{\bold m}=v_1^{m_1}...v_{n+k}^{m_{n+k}}$, 
then 
$$
\sum_{\bold m} a_{\bold m}(\sum m_j)\frac{v^{\bold m}}{1-v^{\bold m}}=\frac{\sum v_j}{1-\sum v_j},
$$
since any nontrivial word is uniquely a power of a non-power word. 
(The factor $\sum m_j$ appears because non-power words are considered up to cyclic permutations, 
which act freely on them).
Applying the inverse of the Euler operator, $E^{-1}$, which divides terms of total degree $N$ by $N$, we get
$$
\sum_{\bold m} a_{\bold m}\log (1-v^{\bold m})=\log(1-\sum v_j),
$$
or 
$$
\prod_{\bold m} (1-v^{\bold m})^{a_{\bold m}}=1-\sum v_j,
$$
which implies that $a_{\bold m}$ are the dimensions of the components of the free Lie algebra. 

These two statements imply the proposition.
\end{proof}

\subsubsection{Torsion in $B_2(A_{n,k}(\Bbb Z))$}

\begin{prop}\label{no4tor}
If $k\ge 1$ and $m_i>0$ then torsion in $B_2(A_{n,k}(\Bbb Z))[m_1,...,m_{n+k}]$ 
is a vector space over $\Bbb F_2$, which is a quotient of 
$\Omega^{odd}_{ex}(\Bbb Z^{n|k})[\bold m]\otimes {\Bbb F}_2$. In particular, there is no 4-torsion,
and $B_2(A_{n,k}(\Bbb Z[1/2]))[m_1,...,m_{n+k}]$
is a free abelian group.  
\end{prop}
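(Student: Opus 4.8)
The plan is to adapt the proof of Theorem~\ref{upbo} to the super setting; the one genuinely new feature is the $\mathbb{F}_2$-vector space $K$ by which $\bar B_1(A_{n,k}(\ZZ))$ fails to equal $\Omega^{ev}(\ZZ^{n|k})/\Omega^{ev}_{ex}(\ZZ^{n|k})$, which was already isolated in the proof of Theorem~\ref{b1barsup}. Throughout fix a multidegree $\mathbf m=(m_1,\dots,m_{n+k})$ with all $m_i>0$ and $k\ge 1$. Since the Poincar\'e lemma for one odd variable holds over $\ZZ$ (used in the proof of Theorem~\ref{b1barsup}), the K\"unneth theorem applied to $\Omega(\ZZ^{n|k})=\Omega(\ZZ^{n|k-1})\otimes\Omega(\ZZ^{0|1})$ forces the entire De Rham cohomology $H^\bullet(\Omega(\ZZ^{n|k}))[\mathbf m]$ to vanish. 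Consequently, in this multidegree each of $\Omega^{ev}/\Omega^{ev}_{ex}$, $\Omega^{odd,\ge 3}/\Omega^{odd,\ge 3}_{ex}$ and $\Omega^{odd}_{ex}$ is a free abelian group, and $\Omega^{odd}_{cl}[\mathbf m]=\Omega^{odd}_{ex}[\mathbf m]$.

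Over the base ring $\ZZ[1/2]$ the argument of Section~4 goes through in the super case essentially verbatim: the super Fedosov isomorphism of Proposition~\ref{phimap} identifies $\bar B_1(A_{n,k}(\ZZ[1/2]))$ with $\Omega^{ev}/\Omega^{ev}_{ex}$, and the super analogs of Lemma~\ref{degg1}, Lemma~\ref{le}, the pseudoregularity computation of \cite{DKM}, and the Feigin--Shoikhet description (now \cite{BJ}) all hold with the usual super signs, so as in the proof of Theorem~\ref{upbo} the torsion of $B_2(A_{n,k}(\ZZ[1/2]))$ is a quotient of $\bigoplus_{k\ge 1}H^{2k+1}(\Omega(\ZZ^{n|k}))\otimes\ZZ[1/2]$. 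By the vanishing above this is $0$ in multidegree $\mathbf m$, so $B_2(A_{n,k}(\ZZ[1/2]))[\mathbf m]$ is free --- the last assertion of the Proposition --- and in particular $\mathrm{tor}\,B_2(A_{n,k}(\ZZ))[\mathbf m]$ is a $2$-group.

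It remains to analyze the $2$-torsion over $\ZZ$. Here we use the linear isomorphism $\varphi\colon A_{n,k}(\ZZ)/M_3\to\Omega^{ev}(\ZZ^{n|k})$ from the proof of Theorem~\ref{b1barsup}, under which $2\,\Omega^{ev,+}_{ex}\subseteq\varphi(L_2)\subseteq\Omega^{ev,+}_{ex}$; thus there is a short exact sequence $0\to K\to\bar B_1(A_{n,k}(\ZZ))\to\Omega^{ev}/\Omega^{ev}_{ex}\to 0$ with $K=\Omega^{ev,+}_{ex}/\varphi(L_2)$ an $\mathbb{F}_2$-vector space which in multidegree $\mathbf m$ is a quotient of $\Omega^{ev,+}_{ex}[\mathbf m]\otimes\mathbb{F}_2$. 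By the super analog of Lemma~\ref{torLemma1} there is a surjection $\xi\colon V\otimes\bar B_1\twoheadrightarrow B_2(A_{n,k}(\ZZ))$, $V$ being the $\ZZ$-span of the generators. Two observations finish the structural part. First, if $\bar z\in\bar B_1$ is $2$-torsion and $v$ is a generator, then $2\,\xi(v\otimes\bar z)$ is the class in $B_2$ of $[v,w]$ for some $w\in L_2+M_3$, which lies in $L_3$ by Lemma~\ref{lem:FSLemma1}; hence $\xi(V\otimes K)\subseteq\mathrm{tor}\,B_2$. Second, since $\Omega^{ev}/\Omega^{ev}_{ex}[\mathbf m]$ is free (proof of Theorem~\ref{b1barsup}) the extension splits in this multidegree, and restricting $\xi$ to the complementary summand and running the skew-symmetry reduction from the proof of Theorem~\ref{upbo} (with super signs) presents its image as $\mathrm{im}(\eta)$ for a map $\eta\colon\Omega^1\oplus\Omega^{odd,\ge 3}/\Omega^{odd,\ge 3}_{ex}\to B_2$; combining the super analog of Lemma~\ref{le} with the vanishing of the obstruction group $H^{odd,\ge 3}[\mathbf m]$ shows, exactly as in the even case, that $\mathrm{im}(\eta)$ is torsion-free. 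Since $\mathrm{im}(\eta)$ is torsion-free and $B_2=\mathrm{im}(\eta)+\xi(V\otimes K)$ with $\xi(V\otimes K)$ torsion, it follows that $\mathrm{tor}\,B_2[\mathbf m]=\xi(V\otimes K[\mathbf m])$; in particular it is a quotient of $V\otimes K[\mathbf m]$, hence an $\mathbb{F}_2$-vector space, so there is no $4$-torsion. Finally, applying to $V\otimes K$ the same wedge map $v\otimes\omega\mapsto dv\wedge\omega$ that reduced $V\otimes(\Omega^{ev}/\Omega^{ev}_{ex})$ to $\Omega^1\oplus\Omega^{odd,\ge 3}/\Omega^{odd,\ge 3}_{ex}$, together with $K$ being a quotient of $\Omega^{ev,+}_{ex}[\mathbf m]\otimes\mathbb{F}_2$, exhibits $\mathrm{tor}\,B_2[\mathbf m]$ as a quotient of $\Omega^{odd,\ge 3}_{ex}(\ZZ^{n|k})[\mathbf m]\otimes\mathbb{F}_2$, and a fortiori of $\Omega^{odd}_{ex}(\ZZ^{n|k})[\mathbf m]\otimes\mathbb{F}_2$, as claimed.

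I expect the main obstacle to be this last identification. In the purely even case the torsion of $B_2$ is governed by $H^{odd,\ge 3}$, which vanishes in these degrees, so here one must instead show that \emph{all} of the torsion is produced by $K$ --- the failure of $\varphi(L_2)$ to exhaust $\Omega^{ev,+}_{ex}$, which over $\ZZ$ comes precisely from the even powers of the $dy_j$ --- and that the bracket $[v,-]$ carries $V\otimes K$ exactly onto a quotient of $\Omega^{odd}_{ex}\otimes\mathbb{F}_2$. Carrying this out cleanly requires installing the super versions of Lemmas~\ref{degg1} and~\ref{le} and of the identity $[a,[b,c]z]\equiv\pm[b,[a,c]z]\pmod{L_3}$ with the correct signs, and then redoing over $\ZZ$ and $\mathbb{F}_2$ the bookkeeping from the end of the proof of Theorem~\ref{upbo}, where --- unlike over a $\mathbb{Q}$-algebra --- one cannot clear the factor $2$; this is where the real work lies.
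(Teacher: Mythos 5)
Your proof is correct and follows the same route as the paper, which likewise deduces the result by adapting Theorem~\ref{upbo} to the super case, using the containment $2\,\Omega^{ev,+}_{ex}\subseteq\varphi(L_2)\subseteq\Omega^{ev,+}_{ex}$ established in Theorem~\ref{b1barsup}, the vanishing of the super De Rham cohomology in positive multidegrees, and the \cite{BJ} description of $B_2\otimes\Bbb Q$. The paper's version is more compact --- it works directly with the surjection onto $B_2$ from $\Omega^{odd}[\mathbf m]/2\Omega^{odd}_{ex}[\mathbf m]$, observes that $\Omega^{odd}_{ex}[\mathbf m]$ is saturated because the cohomology vanishes, and concludes that the torsion is a quotient of $\Omega^{odd}_{ex}[\mathbf m]/2$ in one stroke, rather than first choosing a (non-canonical) splitting $\bar B_1[\mathbf m]\cong K\oplus\Omega^{ev}/\Omega^{ev}_{ex}[\mathbf m]$ and treating the two pieces separately; but the two arguments rest on the same ingredients and your decomposition $\mathrm{tor}\,B_2=\xi(V\otimes K)$, justified by the $M+T$ reasoning, is sound. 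Your own diagnosis of where detail is owed (the super versions of Lemmas~\ref{degg1} and~\ref{le} and the skew-symmetry identity, with signs) matches exactly the steps the paper elides under ``analogously to the proof of Theorem~\ref{upbo}.''
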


\begin{proof}
It is shown in the proof of Theorem \ref{b1barsup} that 
$\bar B_1(A_{n,k}(\Bbb Z))[\bold m]$ is a quotient of 
$\Omega^{ev}(\Bbb Z^{n|k})[\bold m]/2\Omega^{ev}_{ex}(\Bbb Z^{n|k})[\bold m]$.
Therefore, analogously to the proof of Theorem \ref{upbo} one shows that  
$B_2(A_{n,k}(\Bbb Z))[\bold m]$ is a quotient of 
$\Omega^{odd}(\Bbb Z^{n|k})[\bold m]/2\Omega^{odd}_{ex}(\Bbb Z^{n|k})[\bold m]$.
But as explained in the proof of Theorem \ref{b1barsup}, 
the De Rham cohomology groups vanish in positive multidegrees. 
Hence, the subgroup $\Omega^{odd}_{ex}(\Bbb Z^{n|k})[\bold m]$
is saturated. So the torsion in $B_2(A_{n,k}(\Bbb Z))[\bold m]$ is a quotient of 
$\Omega^{odd}_{ex}(\Bbb Z^{n|k})[\bold m]/2$, as desired. 
\end{proof}

Let us now discuss the 2-torsion in $B_2(A_{n,k}(\Bbb Z))$ (which by Proposition \ref{no4tor} 
is all the torsion in positive degrees for $k>0$). 
Arguing as in Lemma \ref{lem:Universal}, we see that for any abelian groups 
$\Bbb Z^N\supset A\supset B$, 
we have an exact sequence 
$$
{\rm tor}_p(\Bbb Z^N/A)\to (A/B)\otimes \Bbb F_p\to A_p/B_p\to 0,
$$
which implies that 
$$
\dim(A_p/B_p)\le \dim ((A/B)\otimes \Bbb F_p)\le \dim(A_p/B_p)+\dim {\rm tor}_p(\Bbb Z^N/A).
$$
Setting $p=2$, $\Bbb Z^N=L_1[\bold m]$, $A=L_2[\bold m]$, and $B=L_3[\bold m]$, 
this becomes
$$
\dim B_2(A_{n+k}(\Bbb F_2))[\bold m]\le \dim B_2(A_{n,k}(\Bbb Z))[\bold m]\otimes \Bbb F_2\le \dim B_2(A_{n+k}(\Bbb F_2))[\bold m]+\dim {\rm tor}_2(B_1(A_{n,k}(\Bbb Z))[\bold m].
$$
Subtracting $\dim B_2(A_{n+k}(\Bbb Q))$, we get 

\begin{prop}\label{ineq}
$$
\dim {\rm tor}_2B_2(A_{n+k}(\Bbb Z))[\bold m]\le \dim {\rm tor}_2(B_2(A_{n,k}(\Bbb Z)))[\bold m]\le 
$$
$$
\le\dim {\rm tor}_2B_2(A_{n+k}(\Bbb Z))[\bold m]+
\dim {\rm tor}_2(B_1(A_{n,k}(\Bbb Z))[\bold m].
$$
\end{prop}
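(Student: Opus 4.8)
The plan is to deduce the two-sided bound formally from the unsplit universal-coefficient sequence discussed just above the statement, together with Corollary~\ref{unco}(2) and the elementary identity $\dim_{\Bbb F_2}(G\otimes\Bbb F_2)={\rm rank}_{\ZZ}\,G+\dim_{\Bbb F_2}{\rm tor}_2(G)$ valid for any finitely generated abelian group $G$. First I would record, exactly as in the proof of Lemma~\ref{lem:Universal} but now \emph{without} assuming ${\rm tor}_p(\ZZ^N/A)=0$: for abelian groups $\ZZ^N\supset A\supset B$ and a prime $p$, tensoring $0\to A/B\to \ZZ^N/B\to \ZZ^N/A\to 0$ with $\Bbb F_p$ gives an exact sequence ${\rm tor}_p(\ZZ^N/A)\to (A/B)\otimes\Bbb F_p\to A_p/B_p\to 0$ (with $A_p,B_p$ the images of $A,B$ in $\Bbb F_p^N$), hence $\dim(A_p/B_p)\le\dim((A/B)\otimes\Bbb F_p)\le\dim(A_p/B_p)+\dim{\rm tor}_p(\ZZ^N/A)$.

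Next I would specialize to $p=2$, $\ZZ^N=A_{n,k}(\ZZ)[\bold m]$, $A=L_2(A_{n,k}(\ZZ))[\bold m]$, $B=L_3(A_{n,k}(\ZZ))[\bold m]$, with $\bold m=(m_1,\dots,m_{n+k})$ fixed and all $m_i>0$. Then $\ZZ^N/A=B_1(A_{n,k}(\ZZ))[\bold m]$ and $A/B=B_2(A_{n,k}(\ZZ))[\bold m]$. The one genuine identification to make is that of $A_2/B_2$: since modulo $2$ the super-commutator $ab-(-1)^{d_ad_b}ba$ coincides with the ordinary commutator $ab-ba$, the image of $L_i(A_{n,k}(\ZZ))[\bold m]$ in $\Bbb F_2^N$ equals $L_i(A_{n+k}(\Bbb F_2))[\bold m]$ for $i=2,3$ (one checks both inclusions: every super-commutator reduces to an ordinary one, and every ordinary commutator is realized by reducing the super-commutator of homogeneous lifts). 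Therefore $A_2/B_2=B_2(A_{n+k}(\Bbb F_2))[\bold m]$, and the sandwich becomes
\[
\dim B_2(A_{n+k}(\Bbb F_2))[\bold m]\le \dim B_2(A_{n,k}(\ZZ))[\bold m]\otimes\Bbb F_2\le \dim B_2(A_{n+k}(\Bbb F_2))[\bold m]+\dim{\rm tor}_2 B_1(A_{n,k}(\ZZ))[\bold m].
\]

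Finally I would pass from $\otimes\Bbb F_2$-dimensions to torsion dimensions. For the two outer terms, Corollary~\ref{unco}(2) gives $B_2(A_{n+k}(\Bbb F_2))=B_2(A_{n+k}(\ZZ))\otimes\Bbb F_2$, so by the identity above $\dim B_2(A_{n+k}(\Bbb F_2))[\bold m]=\dim B_2(A_{n+k}(\Bbb Q))[\bold m]+\dim{\rm tor}_2 B_2(A_{n+k}(\ZZ))[\bold m]$; for the middle term, $\dim B_2(A_{n,k}(\ZZ))[\bold m]\otimes\Bbb F_2={\rm rank}\,B_2(A_{n,k}(\ZZ))[\bold m]+\dim{\rm tor}_2 B_2(A_{n,k}(\ZZ))[\bold m]$. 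Subtracting $\dim B_2(A_{n+k}(\Bbb Q))[\bold m]$ throughout, and using that ${\rm rank}\,B_2(A_{n,k}(\ZZ))[\bold m]=\dim B_2(A_{n+k}(\Bbb Q))[\bold m]$ — i.e.\ that the free part of $B_2$ in a strictly positive multidegree has the same dimension in the super and purely even settings, which follows from the characteristic-$0$ descriptions in \cite{FS},\cite{BJ} once one notes, as in the proof of Theorem~\ref{b1barsup}, that the odd-variable De Rham cohomology vanishes in positive multidegree — the displayed chain collapses exactly to the asserted inequality.

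The step I expect to be the main obstacle is precisely this last cancellation: one must be certain that ${\rm rank}\,B_2(A_{n,k}(\ZZ))[\bold m]$ equals $\dim B_2(A_{n+k}(\Bbb Q))[\bold m]$, since otherwise the free contributions on the two sides do not match and the middle term of the inequality is shifted by their difference. The identification $A_2/B_2=B_2(A_{n+k}(\Bbb F_2))[\bold m]$ is a secondary point that needs an explicit, if short, verification. Everything else is formal manipulation of the universal-coefficient sequence and of short exact sequences of finitely generated abelian groups.
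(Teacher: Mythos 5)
Your argument is essentially the paper's own: the paper also derives the unsplit universal-coefficient exact sequence (``arguing as in Lemma~\ref{lem:Universal}''), specializes to $p=2$, $\ZZ^N=L_1[\bold m]$, $A=L_2[\bold m]$, $B=L_3[\bold m]$, identifies $A_2/B_2$ with $B_2(A_{n+k}(\Bbb F_2))[\bold m]$ (since super- and ordinary commutators agree mod~$2$), and then subtracts $\dim B_2(A_{n+k}(\Bbb Q))[\bold m]$ from all three terms. You have correctly identified the one step the paper leaves tacit, namely that the subtraction converts $\otimes\Bbb F_2$-dimensions into $2$-torsion dimensions \emph{on the middle term as well}, which needs $\operatorname{rank} B_2(A_{n,k}(\ZZ))[\bold m]=\dim B_2(A_{n+k}(\Bbb Q))[\bold m]$ in strictly positive multidegree; your appeal to the characteristic-zero descriptions in \cite{FS},\cite{BJ} plus vanishing of super De~Rham cohomology in positive multidegree is a reasonable way to supply it. So the proof is correct and takes the same route, just with this implicit rank-matching step made explicit.
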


\begin{rem}
1. Both inequalities in Proposition \ref{ineq} can be strict. 
For instance, for $B_2(A_{2,1}(\Bbb Z))[2,2,2]$, they look like $1\le 2\le 3$. 

2. The upper bound of Proposition \ref{ineq} is asymptotically bad -- it grows exponentially with degrees, 
while the bound of Proposition \ref{no4tor} is uniform in degrees. However, in particular degrees 
the bound of Proposition \ref{ineq} may be better. For example, for $n=k=1$ and $\bold m=(2,4)$, 
then there is a unique odd exact form up to scaling, namely $xdx(dy)^4=d(xydx(dy)^3)$, so 
Proposition \ref{no4tor} gives the bound of $1$, while Proposition \ref{ineq} implies that 
${\rm tor}B_2(A_{1,1}(\Bbb Z))[2,4]=0$. 
\end{rem} 

Proposition \ref{ineq} together with Proposition \ref{torb1} 
implies the following corollary. 

\begin{cor}
If the group $B_2(A_{n,k}(\Bbb Z))[\bold m]$ has nontrivial 2-torsion, 
then all $m_i$ are even. Moreover, unless 
$(m_{n+1}+...+m_{n+k})/{\rm gcd}(\bold m)$ is odd, this 2-torsion 
depends only on $n+k$ (i.e., is the same as the 2-torsion in $B_2(A_{n+k}(\Bbb Z))$).   
\end{cor}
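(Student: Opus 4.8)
The plan is to read the corollary off from Proposition \ref{ineq} by substituting into it the explicit vanishing criterion for ${\rm tor}_2 B_1(A_{n,k}(\Bbb Z))[\bold m]$ supplied by Proposition \ref{torb1}, handling the two assertions in turn. (Throughout, if some $m_i=0$ one first reduces to a free algebra in fewer generators, so we may assume all $m_i>0$.)

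\emph{First assertion.} I would argue by contraposition: suppose some $m_i$ is odd, so that $D:={\rm gcd}(\bold m)$ is odd. On one side, Theorem \ref{upbo} exhibits ${\rm tor}\,B_2(A_{n+k}(\Bbb Z))[\bold m]$ as a quotient of $\bigoplus_j H^{2j+1}(\Omega(\Bbb Z^{n+k}))[\bold m]$, which by Corollary \ref{Hev-description} is a direct sum of copies of $\Bbb Z/D$; since $D$ is odd this has no $2$-torsion, so ${\rm tor}_2 B_2(A_{n+k}(\Bbb Z))[\bold m]=0$. On the other side, Proposition \ref{torb1} gives ${\rm tor}_2 B_1(A_{n,k}(\Bbb Z))[\bold m]=0$, because not all $m_i$ are even. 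Feeding both vanishings into the upper bound of Proposition \ref{ineq} forces $\dim {\rm tor}_2 B_2(A_{n,k}(\Bbb Z))[\bold m]=0$; as this $2$-torsion group is an $\Bbb F_2$-vector space by Proposition \ref{no4tor}, it is trivial. Hence nontrivial $2$-torsion forces all $m_i$ even.

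\emph{Second assertion.} Now assume all $m_i$ are even but $(m_{n+1}+\cdots+m_{n+k})/D$ is even. By Proposition \ref{torb1} this makes ${\rm tor}_2 B_1(A_{n,k}(\Bbb Z))[\bold m]=0$, so the lower and upper bounds in Proposition \ref{ineq} coincide and yield
$$
\dim {\rm tor}_2 B_2(A_{n,k}(\Bbb Z))[\bold m]=\dim {\rm tor}_2 B_2(A_{n+k}(\Bbb Z))[\bold m].
$$
To upgrade this to the asserted identification of the $2$-torsion groups I would invoke Proposition \ref{no4tor}, which makes the left-hand group an $\Bbb F_2$-vector space and hence determined by its dimension. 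Everything apart from this last step is mechanical chaining of Propositions \ref{torb1} and \ref{ineq}.

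The step I expect to be the real obstacle is precisely this final promotion from a numerical equality to an honest isomorphism: the quantity ``$\dim {\rm tor}_2$'' appearing in Proposition \ref{ineq} is extracted by subtracting a rational Hilbert series, so one must check that the rational dimensions of $B_2(A_{n,k})$ and $B_2(A_{n+k})$ (from \cite{FS}, \cite{BJ}) enter compatibly, and — for an isomorphism of groups rather than merely an equality of $\Bbb F_2$-dimensions — that no $4$-torsion intervenes in the multidegrees at hand. For the super side this is exactly Proposition \ref{no4tor}; on the even side it rests on the $\Omega^{odd,\ge3}_{ex}$ bound of Theorem \ref{upbo} controlling the torsion, and is the only point where genuine care beyond bookkeeping is needed.
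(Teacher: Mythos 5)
Your argument is essentially the paper's own (the paper's entire proof is the single line ``Proposition~\ref{ineq} together with Proposition~\ref{torb1} implies the following corollary''), and you correctly chain exactly those two facts. You also correctly notice that the one-liner is not literally self-contained for the first assertion: one additionally needs to know that $\dim{\rm tor}_2 B_2(A_{n+k}(\Bbb Z))[\bold m]=0$ whenever some $m_i$ is odd, so that the lower bound in Proposition~\ref{ineq} is not already forcing $2$-torsion on the super side. You supply this via Theorem~\ref{upbo} and Corollary~\ref{Hev-description}, which is the intended source; the paper treats that vanishing as already established background. So your route and the paper's agree, with your version a bit more explicit about the dependency on Theorem~\ref{upbo}.

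Your closing caveats are the right ones to raise, but it is worth being precise about their status. The quantity $\dim{\rm tor}_2$ appearing in Proposition~\ref{ineq} is, by construction, $\dim_{\Bbb F_2}(T/2T)$ (i.e.\ the number of cyclic factors of even order), so the proposition delivers exactly an equality of these $\Bbb F_2$-dimensions. Proposition~\ref{no4tor} guarantees that the super side has no $4$-torsion, hence is determined as a group by $\dim(T/2T)$; on the purely even side, Theorem~\ref{upbo} controls the torsion via $H^{\mathrm{odd},\ge 3}(\Omega(\Bbb Z^{n+k}))$, which \emph{can} contain $\ZZ/4$ summands in multidegrees with $\gcd$ divisible by $4$, so the corollary's phrase ``is the same as'' is most safely read as an equality of $\dim(T/2T)$, which is precisely what Proposition~\ref{ineq} proves. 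Likewise, the step in the derivation of Proposition~\ref{ineq} where one subtracts $\dim B_2(A_{n+k}(\Bbb Q))$ from a quantity involving $B_2(A_{n,k}(\Bbb Z))$ is using the fact that the ranks of $B_2(A_{n,k})$ and $B_2(A_{n+k})$ agree over $\Bbb Q$ in each positive multidegree (a consequence of \cite{FS} and \cite{BJ}); you flag this correctly, and it is indeed an implicit ingredient the paper does not spell out.
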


\begin{cor}
If $n+k=2$ then necessary conditions 
for a nontrivial 2-torsion are the conditions of Example 
\ref{twovar}.  
\end{cor}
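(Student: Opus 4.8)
The plan is to deduce this directly from the preceding corollary together with Theorem~\ref{noTorsionB2}, after which only an elementary $2$-adic valuation computation remains. So suppose $n+k=2$ and $B_2(A_{n,k}(\Bbb Z))[\bold m]$ has nontrivial $2$-torsion. By the preceding corollary all $m_i$ are even, and moreover $(m_{n+1}+\cdots+m_{n+k})/{\rm gcd}(\bold m)$ must be odd: otherwise that corollary says the $2$-torsion of $B_2(A_{n,k}(\Bbb Z))[\bold m]$ equals the $2$-torsion of $B_2(A_{n+k}(\Bbb Z))[\bold m]=B_2(A_2(\Bbb Z))[\bold m]$, which vanishes by Theorem~\ref{noTorsionB2}, a contradiction. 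Thus the problem reduces to showing that the pair of conditions ``all $m_i$ even'' and ``$(m_{n+1}+\cdots+m_{n+k})/{\rm gcd}(\bold m)$ odd'' coincides, in each of the two relevant signatures, with the condition stated in Example~\ref{twovar}.

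For $n=1$, $k=1$ write $\bold m=(m_1,m_2)$ with $m_1$ the degree in the even generator and $m_2$ the degree in the odd one, and set $d={\rm gcd}(m_1,m_2)$; here $m_{n+1}=m_2$. Since $\nu(d)=\min(\nu(m_1),\nu(m_2))$, the quotient $m_2/d$ is odd exactly when $\nu(m_2)=\min(\nu(m_1),\nu(m_2))$, i.e. when $\nu(m_1)\ge\nu(m_2)$; together with the parity condition this is precisely Example~\ref{twovar} in this case. For $n=0$, $k=2$ write $m_i=d m_i'$ with ${\rm gcd}(m_1',m_2')=1$, so $(m_1+m_2)/d=m_1'+m_2'$; since $m_1',m_2'$ are coprime they are not both even, hence $m_1'+m_2'$ is odd exactly when exactly one of them is even, i.e. when $\nu(m_1')$ and $\nu(m_2')$ are not both zero, and as $\nu(m_i')=\nu(m_i)-\nu(d)$ with $\nu(d)=\min(\nu(m_1),\nu(m_2))$ this happens precisely when $\nu(m_1)\ne\nu(m_2)$. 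Combined with ``$m_1,m_2$ even'' this is exactly Example~\ref{twovar} for $n=0$, $k=2$, which completes the argument.

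There is no real obstacle here: all the substance is already contained in the preceding corollary (which in turn rests on Propositions~\ref{ineq} and~\ref{torb1}) and in the vanishing asserted by Theorem~\ref{noTorsionB2}, and what is left is the routine passage from a ${\rm gcd}$ condition to a statement about $2$-adic valuations done above. The only point that requires attention is bookkeeping of the indexing, namely that $m_{n+1},\dots,m_{n+k}$ are the degrees in the \emph{odd} generators; this is exactly what makes the two cases of Example~\ref{twovar} asymmetric, $\nu(m_1)\ne\nu(m_2)$ in the purely odd case versus $\nu(m_1)\ge\nu(m_2)$ in the mixed case.
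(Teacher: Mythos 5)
Your proof is correct and is essentially the intended argument: you combine the preceding corollary (which gives that all $m_i$ must be even and that, if $(m_{n+1}+\cdots+m_{n+k})/\gcd(\mathbf m)$ is even, the $2$-torsion agrees with that of $B_2(A_2(\ZZ))$) with Theorem~\ref{noTorsionB2} to force the quotient to be odd, and then translate the resulting pair of conditions into the $2$-adic valuation statements of Example~\ref{twovar}. The paper omits the proof, but this is exactly the straightforward deduction it relies on; your careful handling of the index range $m_{n+1},\dots,m_{n+k}$ (degrees in the odd generators) is precisely what accounts for the asymmetry between the $(0,2)$ and $(1,1)$ cases, and both valuation computations check out.
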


Note that this agrees with the results of computer calculations.

\subsubsection{Results and questions on torsion in higher $B_\ell$} 

\begin{prop}\label{nontor}
Nontrivial torsion in $B_\ell(A_{n,k}(\Bbb Z[1/2]))[m_1,...,m_{n+k}]$ implies \linebreak $\ell \le m_1 +  \ldots + m_{n+k} - 2$.
\end{prop}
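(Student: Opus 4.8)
The plan is to prove the contrapositive: writing $R=\Bbb Z[1/2]$ and $N=m_1+\cdots+m_{n+k}$, I would assume $\ell\ge N-1$ and show that $B_\ell(A_{n,k}(R))[m_1,\ldots,m_{n+k}]$ is torsion-free. The reason for base-changing to $\Bbb Z[1/2]$ is that $R$ is then a principal ideal domain over which every homogeneous component $A_{n,k}(R)[m_1,\ldots,m_{n+k}]$ is a finitely generated free module, so that submodules of such components are automatically torsion-free. When $\ell\ge N$ this already finishes it: an $(\ell+1)$-fold iterated super-commutator of homogeneous elements has total degree $\ge\ell+1>N$, so $L_{\ell+1}[m_1,\ldots,m_{n+k}]=0$, whence $B_\ell[m_1,\ldots,m_{n+k}]=L_\ell[m_1,\ldots,m_{n+k}]$ is a submodule of a free $R$-module. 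As in Remark \ref{weakerst} for the purely even case over $\Bbb Z$, the only case with content is $\ell=N-1$.

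For $\ell=N-1$ one has $B_{N-1}[m_1,\ldots,m_{n+k}]=L_{N-1}[m_1,\ldots,m_{n+k}]/L_N[m_1,\ldots,m_{n+k}]$, and this embeds into $A_{n,k}(R)[m_1,\ldots,m_{n+k}]/L_N[m_1,\ldots,m_{n+k}]$; so it suffices to show that $L_N(A_{n,k}(R))$ is a pure (saturated) $R$-submodule of $A_{n,k}(R)$ in multidegree $(m_1,\ldots,m_{n+k})$. I would first observe that in total degree $N$ the subspace $L_N$ is exactly the free Lie superalgebra: an $N$-fold super-commutator of total degree $N$ must have each of its $N$ entries of degree $1$, hence an $R$-linear combination of generators, so by multilinearity $L_N(A_{n,k}(R))[m_1,\ldots,m_{n+k}]$ is spanned by the length-$N$ super-bracket monomials in $x_1,\ldots,x_n,y_1,\ldots,y_k$, i.e.\ it is the multidegree-$(m_1,\ldots,m_{n+k})$ component of the free Lie superalgebra $\mathfrak f$ on these generators. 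Then I would invoke the Poincar\'e--Birkhoff--Witt theorem for $\mathfrak f$ over $R$, valid because $2$ is invertible: $A_{n,k}(R)$ is the enveloping algebra $U(\mathfrak f)$ and admits an $R$-basis of ordered PBW monomials built from a homogeneous basis of $\mathfrak f$, with the length-one monomials forming a basis of $\mathfrak f\subset A_{n,k}(R)$. Consequently $\mathfrak f$, and with it $L_N$ in each multidegree, is an $R$-module direct summand of $A_{n,k}(R)$, the complementary summand being spanned by PBW monomials of length $\ne 1$; in particular the quotient $A_{n,k}(R)[m_1,\ldots,m_{n+k}]/L_N[m_1,\ldots,m_{n+k}]$ is free, hence torsion-free, which is what was needed.

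The one step I expect to require care is the super-PBW input — that $\mathfrak f$ is genuinely a graded $R$-module direct summand of $A_{n,k}(R)$; this is precisely where invertibility of $2$ cannot be dropped (over $\Bbb Z$ one would need divided powers for squares of odd generators, and the purity statement fails, consistently with the $2$-torsion in $B_1$ and $B_2$ found in Section \ref{supercase}). If one prefers not to quote super-PBW as a black box, the required purity can instead be obtained from a rank comparison: $\operatorname{rank}_R L_N(A_{n,k}(R))[m_1,\ldots,m_{n+k}]$, the dimension $\dim_{\Bbb Q} L_N(A_{n,k}(\Bbb Q))[m_1,\ldots,m_{n+k}]$, and the dimensions $\dim_{\Bbb F_p} L_N(A_{n,k}(\Bbb F_p))[m_1,\ldots,m_{n+k}]$ for odd primes $p$ are all given by the same super-Witt dimension formula (PBW over any field of characteristic $\ne 2$), which forces the reduction map $L_N(A_{n,k}(R))\otimes\Bbb F_p\to A_{n,k}(\Bbb F_p)$ to be injective in each multidegree, equivalently forces $A_{n,k}(R)[m_1,\ldots,m_{n+k}]/L_N[m_1,\ldots,m_{n+k}]$ to be torsion-free.
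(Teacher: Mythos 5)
Your proof is correct and takes essentially the same approach as the paper's, which simply invokes Remark \ref{weakerst} with ``free Lie algebra'' replaced by ``free Lie superalgebra'': both arguments identify $L_N$ in total degree $N=m_1+\cdots+m_{n+k}$ with the free Lie superalgebra component and deduce its purity in $A_{n,k}(\Bbb Z[1/2])$ from super-PBW, which is exactly where the inverted $2$ is used. Your added rank-comparison fallback for the purity step is a reasonable way to avoid quoting super-PBW over $\Bbb Z[1/2]$ directly, but it is not a different route in substance.
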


\begin{proof}
Same as in Remark \ref{weakerst} (but using the free Lie superalgebra).
\end{proof}

\begin{rem}
1. Proposition \ref{nontor} seems to be true over $\Bbb Z$ but it seems that the proof would need to be modified, since 
Lie superalgebras don't behave well in characteristic $2$. 

2. We see from the tables that there {\it can} be torsion 
if $\ell=m_1 +  \ldots + m_{n+k}-2$ (this does not seem to occur 
in the even case).  
\end{rem}

\begin{prop}
If $n+k=2$, there is no torsion in $B_\ell[m_1,m_2]$ if either $m_1$ or $m_2$ equals $1$.
\end{prop}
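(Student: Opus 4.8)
The plan is to reduce the problem to a computation in a polynomial ring, using that one of the two generators occurs only once. Say one of $m_1,m_2$ equals $1$; call $v$ the corresponding generator and $u$ the other one, which occurs in some degree $m$. Let $C$ be the subalgebra generated by $u$ (so $C\cong R[u]$, and $C$ is the $v$-degree-$0$ part of the algebra), and let $M$ be the span of all words containing exactly one letter $v$, a free $C$-bimodule of rank one on the generator $v$. First I would observe that the multidegree-$(m,1)$ component of $L_\ell(A_{n,k})$ (hence of $B_\ell$) is unchanged if we pass to the quotient algebra $\tilde A:=A_{n,k}/(\text{ideal of words with at least two }v\text{'s})$: a multihomogeneous $\ell$-fold commutator of total $v$-degree $1$ has exactly one input of $v$-degree $1$ and never multiplies two elements of $v$-degree $1$, so the relevant part of $L_\ell$ is computed identically in $A_{n,k}$ and in $\tilde A=C\oplus M$, the trivial square-zero extension of $C$ by $M$ (where $M\cdot M=0$).

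Next, identify $M$ with $R[s,t]$ as a $C$-bimodule via $u^avu^b\mapsto s^at^b$, so that left and right multiplication by $u$ become multiplication by $s$ and by $t$. A short direct calculation in $\tilde A$ gives: $L_2(\tilde A)$ has $v$-degree-$1$ part $N:=[C,M]$; for $\ell\ge 3$ one has $L_\ell(\tilde A)\subseteq M$; and $L_{\ell+1}(\tilde A)=[C,L_\ell(\tilde A)]=\operatorname{ad}_C(L_\ell(\tilde A))$ (the brackets $[M,-]$ vanish since $M\cdot M=0$, and the $v$-degree-$0$ contributions are either zero or killed). On $M\cong R[s,t]$ the operator $[u^j,-]$ is multiplication by $s^j\pm t^j$, with the sign dictated by the Koszul rule, and every such polynomial is divisible by a suitable primitive linear form $s\pm t$. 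Carrying out the induction, one finds that in each fixed degree $L_\ell(\tilde A)[v=1,\,u\text{-degree }m]$ equals the graded piece $P_\ell\cdot R[s,t]_{m-\ell+1}$, where $P_\ell$ is a product of $\ell-1$ linear forms each equal to $s-t$ or $s+t$, and passing from $L_\ell$ to $L_{\ell+1}$ multiplies $P_\ell$ by exactly one further such linear form.

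Granting this, $B_\ell(A_{n,k})[m,1]\cong L_\ell/L_{\ell+1}$ in the given degree is, after cancelling the common factor $P_\ell$ (legitimate since $R[s,t]$ is a domain), isomorphic to $R[s,t]_{m-\ell+1}/(s\pm t)R[s,t]_{m-\ell}$, i.e.\ to a graded piece of $R[s,t]/(s\pm t)\cong R[w]$, which is a free $R$-module (of rank $1$ if $m\ge\ell-1$ and $0$ otherwise); in particular it has no torsion, which is the assertion. In the purely even cases ($A_2$, or $A_{1,1}$ with the odd generator in degree $1$) this is immediate: $C$ is genuinely commutative, all the linear forms are $s-t$, and $L_\ell(\tilde A)[v=1]=(s-t)^{\ell-1}R[s,t]$. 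The main obstacle is the remaining super case, where $u$ is an odd generator: then $C$ is not commutative ($[u^i,u^j]=2u^{i+j}$ for $i,j$ odd), and one must (a) track the alternating $s\pm t$ pattern through the Koszul signs, and (b) check that the extra generators of $L_3$ coming from the nonzero $[C,C]$ — all of which carry a factor $2$ — already lie in $\operatorname{ad}_C(N)=(s^2-t^2)R[s,t]$ and therefore contribute nothing new. This bookkeeping, though elementary, is where the bulk of the work lies.
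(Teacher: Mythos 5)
Your overall strategy is the same as the paper's: restrict to multidegree $(m,1)$, observe the computation happens in a ``cyclic bimodule'' over the polynomial ring in the degree-$m$ generator, identify that space with polynomials, and show $L_\ell$ is the graded piece of a principal ideal so that consecutive quotients are free of rank $\le 1$. Where you diverge is in how you handle the Koszul signs in the two super cases: you propose to track them directly, leading to the claim that $L_\ell$ is generated by a product of $\ell-1$ factors each equal to $s-t$ or $s+t$. This is in fact true, but it is messier than you let on, because the sign in ``$[u^j,-]$ is multiplication by $s^j\pm t^j$'' depends not only on $j$ but on the parity of the operand's $u$-degree (through the Koszul factor $(-1)^{j\cdot d}$ with $d$ the parity of the source), so the factors genuinely alternate with the degree and one has to verify, at every step and for every $j\ge 1$, that the higher brackets stay inside the ideal generated by the $j=1$ bracket. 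You acknowledge this as ``bulk of the work'' but do not carry it out, and it is precisely this bookkeeping that constitutes the proof. The paper sidesteps it entirely with a sign-twisted change of basis: in the dehomogenized coordinate $z$ (your $s/t$), one sends $x^i y x^j\mapsto(-1)^{ij}z^i$ (for $x$ odd, $y$ even; $(-1)^{i(j+1)}$ if $y$ is odd), after which the adjoint action of $x^l$ becomes, up to a global $\pm 1$ in each degree, multiplication by $z^l-1$ uniformly, and the whole argument collapses to ``$L_{\ell+1}[m,1]$ = polynomials of degree $\le m$ divisible by $(z-1)^\ell$.'' You should either supply that change of basis or actually do the alternating-linear-forms induction (including the observation you make correctly, that $L_\ell(C)=0$ for $\ell\ge 3$ so the $[M,L_\ell(C)]$ terms only appear at the $L_3$ stage and are already covered).
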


\begin{proof} Denote the generators of $A_{n,k}(\Bbb Z)$ by $x,y$, and consider 
$A_{n,k}(\Bbb Z)[m,1]$. A basis of this group is the collection of elements
$x^iyx^j$, where $i+j=m$. 

Consider first the case when $x$ is even. 
Let us identify $A_{n,k}(\Bbb Z)[m,1]$ with the space of polynomials of a variable $z$ 
with integer coefficients and degree $\le m$, by $x^iyx^j\mapsto z^i$. 
In this case, the operation of bracketing with $x^l$ acting from $A_{n,k}(\Bbb Z)[m-l,1]$ to 
$A_{n,k}(\Bbb Z)[m,1]$ corresponds to the operator $P\mapsto (z^l-1)P$ on polynomials. 
This implies that $L_{i+1}(A_{n,k}(\Bbb Z))[m,1]$ is identified with 
polynomials of degree $\le m$ which are divisible by $(z-1)^i$. So we see that 
$B_{i+1}(A_{n,k}(\Bbb Z))[m,1]=\Bbb Z$ for $i=0,1.,,,m$, and zero for larger $i$, 
so there is no torsion. 

If $x$ is odd, the argument is similar. Namely, if $y$ is even, 
we should use the assignment $x^iyx^j\mapsto (-1)^{ij}z^i$, 
while if $y$ is odd, we should use the assignment 
$x^iyx^j\mapsto (-1)^{i(j+1)}z^i$. 
\end{proof} 

\begin{thm}
The dimensions of $B_i(A_{n,k}(\mathbb{F}_p))[\mathbf{m}]$ for $i\ge 2$ are bounded from above by a constant $K_i$ 
independent of $ \mathbf{m}$ and $p$ (but depending on $n,k$).
\end{thm}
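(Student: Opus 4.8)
The plan is to adapt the argument that yields polynomial growth for $B_i(A_n(\mathbb F_p))$ in the purely even case: the point is that $\bar B(A_{n,k})$ is generated in degree $1$, and that the multidegree components of $\bar B_1$ are uniformly small. Write $V$ for the $(n+k)$-dimensional $\mathbb F_p$-span of the generators $x_1,\dots,x_n,y_1,\dots,y_k$. The geometric input is the bound $\dim_{\mathbb F_p}\bar B_1(A_{n,k}(\mathbb F_p))[\mathbf m]\le 2^{n+k}$, valid for every $\mathbf m$. Indeed $\bar B_1(A_{n,k}(R))=(A_{n,k}(R)/M_3)/\mathrm{im}(L_2)$ is a quotient of $A_{n,k}(R)/M_3$, and by the super-analogue of Proposition \ref{anm3} — equivalently, by the isomorphism $\varphi\colon A_{n,k}(R)/M_3\xrightarrow{\ \sim\ }\Omega^{ev}(R^{n|k})$ constructed in the proof of Theorem \ref{b1barsup} — the group $A_{n,k}(R)/M_3$ is free with a basis whose degree-$\mathbf m$ part has at most $2^{n+k}$ elements (each of $dx_1,\dots,dx_n,y_1,\dots,y_k$ occurs to the power $0$ or $1$, after which the remaining exponents are forced by $\mathbf m$).

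The second step is to show that for $i\ge 2$ the bracketing map
\[
\bar B_1(A_{n,k})\otimes V^{\otimes(i-1)}\longrightarrow B_i(A_{n,k}),\qquad \bar v\otimes g_1\otimes\cdots\otimes g_{i-1}\ \longmapsto\ \mathrm{ad}_{g_1}\cdots\mathrm{ad}_{g_{i-1}}(\bar v),
\]
is well defined and surjective. Well-definedness on the $\bar B_1$-factor is the observation that $\mathrm{ad}_{g_1}\cdots\mathrm{ad}_{g_{i-1}}$ carries $L_2$ into $L_{i+1}$ and, by Lemma \ref{lem:FSLemma1}, carries $M_3$ into $L_3$ and hence into $L_{i+1}$; surjectivity is the statement $B_i=\sum_g[g,B_{i-1}]$ (sum over generators $g$), iterated down to $B_2=\sum_g[g,\bar B_1]$, which for $i=2$ is the super-version of Lemma \ref{degg1}. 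Granting this, the degree-$\mathbf m$ part of the source is $\bigoplus_{(j_1,\dots,j_{i-1})}\bar B_1(A_{n,k}(\mathbb F_p))[\mathbf m-e_{j_1}-\cdots-e_{j_{i-1}}]$, a sum of $(n+k)^{i-1}$ terms each of dimension $\le 2^{n+k}$; hence $\dim_{\mathbb F_p}B_i(A_{n,k}(\mathbb F_p))[\mathbf m]\le (n+k)^{i-1}\,2^{n+k}$, which is the desired constant $K_i$, independent of $\mathbf m$ and $p$.

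The main obstacle is the generator-reduction $B_i(A_{n,k})=\sum_g[g,B_{i-1}(A_{n,k})]$ for $i\ge 3$. I would argue by induction on $i$: assuming $L_i=\sum_g[g,L_{i-1}]$, expand $L_{i+1}=[A,L_i]=\sum_g[A,[g,L_{i-1}]]$, apply the Jacobi identity $[a,[g,\ell]]=[[a,g],\ell]\pm[g,[a,\ell]]$, rewrite $[a,g]\in L_2$ via Lemma \ref{degg1} as a sum of $[g',\cdot\,]$'s, and absorb the resulting error terms modulo $L_{i+1}$ using $[L_a,L_b]\subseteq L_{a+b}$ and the relations $[M_3,L_j]\subseteq L_{j+2}$ (the centrality of the image $I$ of $M_3$ in $B(A_{n,k})$, itself obtained from Lemma \ref{lem:FSLemma1} by a parallel induction). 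The delicate point — exactly as in the purely even case — is the appearance of terms in which a generator \emph{multiplies} rather than brackets an element of $L_{i-1}$, which a priori leaves $L_{i-1}$; handling these, presumably by carrying the identity modulo $L_{i+1}$ and running a secondary induction on word length, is the heart of the argument. Once this is in place, the dimension count above is immediate.
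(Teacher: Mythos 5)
Your first paragraph correctly reproduces the paper's starting point: $A_{n,k}(R)/M_3\cong\Omega^{ev}(R^{n|k})$ has multidegree components of dimension bounded by $2^{n+k}$ (in fact $2^{n+k-1}$, restricting to even rank), and $B_2=\sum_g[g,\bar B_1]$ yields $K_2=(n+k)K$. The gap is exactly at the step you flag as the "heart of the argument" but do not carry out: the claim $B_i=\sum_g[g,B_{i-1}]$ for $i\ge 3$, with the sum over \emph{generators} only. Your Jacobi sketch does not close. When you expand $[a,[g,\ell]]=[[a,g],\ell]\pm[g,[a,\ell]]$ and rewrite $[a,g]\in L_2$ as $\sum_j[c_j,g_j']$ by Lemma \ref{degg1}, another Jacobi step gives $[[c_j,g_j'],\ell]=[c_j,[g_j',\ell]]\mp[g_j',[c_j,\ell]]$: the second term lands in $[V,L_i]$, but the first is $[c_j,L_i]$ with $\deg c_j=\deg a$, so the degree of the bracketing word never drops and the recursion does not terminate. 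There is no analogue of Lemma \ref{lem:FSLemma1} (which gives only $[A,M_3]\subset L_3$) that would absorb the offending terms into $L_{i+2}$.

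The paper's actual proof is materially different and does not attempt to reduce to generators. From $[ab,c]+[bc,a]+[ca,b]=0$ it derives $[ab,[c,d]]+[bc,[a,d]]+[ca,[b,d]]=[c,[ab,d]]+[a,[bc,d]]+[b,[ca,d]]$ and uses this, by induction on the degree of the bracketing word, to show that $L_i=[A_{\le r},L_{i-1}]$ implies $L_{i+1}=[A_{\le 2r+1},L_i]$. Iterating from $L_2=[A_{\le 1},L_1]$ gives $L_{i+1}=[A_{\le 2^i-1},L_i]$, hence $B_{i+1}=[A_{\le 2^i-1},B_i]$ and $K_{i+1}=K_i\cdot\dim A_{\le 2^i-1}$; the only thing needed is that $A_{\le 2^i-1}$ has dimension independent of $\mathbf m$, and the resulting $K_i$ grows like $2^{i^2/2}$. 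The remark immediately following the theorem in the paper shows that even for $p\ge 3$ the authors know only how to reduce to bracketing words of degree $\le 2$ (via Lemma 3.1 of \cite{AJ}), and for $p=2$ even an exponentially growing bound on $K_i$ is merely conjectured. Your claimed $K_i=(n+k)^{i-1}2^{n+k}$ would strictly improve on everything the paper asserts, so the generator-only reduction, if true, would need a genuinely new identity rather than a repackaging of Jacobi and Lemma \ref{degg1}. As written, the proposal does not establish the theorem.
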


\begin{proof}
Since $[ab,c]+[bc,a]+[ca,b]=0$, we have 
$$
[ab,[c,d]]+[bc,[a,d]]+[ca,[b,d]]=[c,[ab,d]]+[a,[bc,d]]+[b,[ca,d]].
$$
Let $A=A_{n,k}(\Bbb F_p)$. Suppose we know that $L_i=[A_{\le r},L_{i-1}]$, where 
$A_{\le r}$ is the part of $A$ of degree $\le r$. Let $u$ be a word of degree $\ge 2r+2$. 
Then we can write $u$ as $ab$, where the degrees of $a$ and $b$ are $\ge r+1$. 
Since $L_i$ is the sum of the spaces $[c,L_{i-1}]$, where $c$ has degree $\le r$, 
we find that $[u,L_i]$ is contained in the sum of spaces $[v,L_i]$, where $v$ has degree 
$\le 2r+1$. This implies that $L_{i+1}=[A_{\le 2r+1},L_i]$. 
Since $L_2=[A_{\le 1},L_1]$ by Lemma \ref{degg1}, we obtain by induction in $i$ that 
$L_{i+1}=[A_{\le 2^i-1},L_i]$ and hence $B_{i+1}=[A_{\le 2^i-1},B_i]$. This implies that if $K_i$ is defined, then 
one can take $K_{i+1}=K_i\dim A_{\le 2^i-1}$. 

It remains show that $K_2$ is defined. To show this, recall that $B_2=\sum [x_i,\bar B_1]$. 
On the other hand, by the results of \cite{BJ} and Theorem \ref{b1barsup}, 
the dimensions of the homogeneous components of $\bar B_1$ are bounded by some constant 
$K$. So we can take $K_2=(n+k)K$, and we are done. 
\end{proof} 
 
\begin{rem}
The bound $K_i$ given by this proof is very poor 
(it grows like $2^{i^2/2}$). For 
$p\ge 3$, one can get a much better 
bound, which grows exponentially with $i$.  
Namely, since $L_\ell(A_{n,k}(\Bbb Z[1/2]))[\ell]$ is a saturated subgroup of 
$A_{n,k}(\Bbb Z[1/2])[\ell]$, it follows from Lemma 3.1 in \cite{AJ}
that over any ring $R$ containing $1/2$, one has 
$$
[(ab+ba)c,B_i]\subset [Q,B_i],
$$
where $Q$ is the span of at most quadratic monomials in $a,b,c$
(where $a,b,c\in A_{n,k}$). This implies that if $E$ is a complement to the 
image in $\bar B_1$ of $1$ and all elements $(ab+ba)c$ where $a,b,c$ are of positive degree, then 
$B_{i+1}=[E,B_i]$. But for $E$ we can take the span of all words of degrees $1,2$
in the generators. The number of such words is $\frac{(n+k)^2}{2}+\frac{3n+k}{2}$. 
Thus, for $p\ge 3$ we can take 
$$
K_i=(\frac{(n+k)^2}{2}+\frac{3n+k}{2})^{i-2}(n+k)K.
$$
for $i\ge 2$. 

We expect that by a suitable strengthening of Lemma 3.1 in \cite{AJ}, 
this argument can be adapted to get an exponentially growing bound 
$K_i$ also in the case $p=2$.     
 \end{rem}

Let us now state some questions that are motivated by the data in the tables. 

{\bf Question 3.} Suppose $k\ge 1,n+k\ge 2$. Can $B_i(A_{n,k}(\Bbb Z))$ 
have $p$-torsion with $p>n+k$? For example, if $n+k=2$ (i.e., $(n,k)=(0,2)$ or $(1,1)$), can 
$B_i$ have $p$-torsion for odd $p$? Is the torsion in $B_i$ in this case a vector space over $\Bbb F_2$? 

{\bf Question 4.} Is there torsion in $B_3(A_{0,2}(\Bbb Z))$? 

{\bf Question 5.} If $n+k=3$, can there be torsion in $B_3(A_{n,k}(\Bbb Z))[m_1,m_2,m_3]$
($m_1,m_2,m_3> 0$) other than 3-torsion? 

\begin{rem} We note that, as follows from the above tables, 
it is not true in the supercase that if a torsion element $T$ in $B_i$ has degree $m$ 
with respect to some generator, then the order of $T$ divides $m$.
For example, for $n=1,k=2$, we have a 3-torsion element in multidegree $(1,2,2)$.  
\end{rem}

\subsection{Lower central series and modular representations} 

Let us discuss the connection of the theory of lower central series with modular 
representation theory. It is clear that the algebraic supergroup scheme $GL(n|k)(\Bbb Z)$ acts 
on each $B_i(A_{n,k}(\Bbb Z))[m]$ (elements of total degree $m$), 
with decomposition into multidegrees being the weight decomposition. 
Moreover, for each prime $p$, the $p$-torsion in this representation is a subrepresentation, which actually 
factors through $GL(n|k)(\Bbb F_p)$. Moreover, the tensor product of this representation with the algebraic closure 
$\overline{\Bbb F_p}$ naturally extends to a representation of $GL(n|k)(\overline{\Bbb F_p})$. 

In the purely even case, from computer data it appears that this modular representation 
is always trivial at the Lie superalgebra level. However, in the supercase we see nontrivial 
Lie superalgebra representations. 

For instance, for $n=1$ and $k=2$, we have a 3-torsion element in $B_3$ in multidegree $(1,2,2)$,
which is unique up to scaling in degree $5$. Its degrees are not divisible by $3$, so 
it defines a 1-dimensional representation of $GL(1|2)$ in characteristic $3$
which is nontrivial at the Lie superalgebra level.  
It is easy to see that at the Lie superalgebra level, 
this is in fact the well-known Berezinian representation 
with weight $(1,-1,-1)$ (which is the same as $(1,2,2)$ in characteristic $3$). 

Now consider the 3-torsion in $B_3$ for $n=0$ and $k=3$, which gives representations of $GL(3)$ 
(see the last table in Subsection 6.1). In this case, 
in degree $5$ we have three independent torsion elements, 
in multidegrees $(1,2,2)$, $(2,1,2)$ and $(2,2,1)$, respectively. 
Clearly, they generate a copy of the representation $V^*\otimes (\wedge^3V)^{\otimes 2}$, where
$V$ is the 3-dimensional vector representation spanned by the generators
of $A_{0,3}$. 

In degree $6$, there is an $8$-dimensional representation, 
with weights being all the permutations of $(1,2,3)$, as well as $(2,2,2)$ (with multiplicity $2$). 
This representation has the same character as ${\mathfrak{sl}}(V)\otimes (\wedge^3V)^{\otimes 2}$.
Note that we cannot claim without additional argument that they are isomorphic,
since ${\mathfrak{sl}}(V)$ is reducible ($1\in {\mathfrak{sl}}(V)$ in characteristic $3$ as $V$ is $3$-dimensional).  
All we can say is that its composition series consists of 
a $7$-dimensional irreducible representation and the 
1-dimensional representation $(\wedge^3V)^{\otimes 2}$. 

In degree $7$, we have a $6$-dimensional representation (whose weights are 
permutations of $(1,3,3)$ and $(2,2,3)$, with 1-dimensional weight spaces), 
which is clearly $S^2V^*\otimes (\wedge^2V)^{\otimes 3}$. 
Finally, we have a representation $(\wedge^3V)^{\otimes 3}$ (trivial at the Lie algebra level) 
sitting in degree $9$.  

Actually, by analogy with \cite{FS}, for $p>2$ 
the $p$-torsion in $B_i$ is a representation of the 
Lie superalgebra $W_{n|k}$ of supervector fields in characteristic $p$
(this action is compatible with the action of $GL(n|k)$). 
In particular, we have an action of the Lie superalgebra 
${\mathfrak{pgl}}(n+1|k)$ which sits inside $W(n|k)$ as the Lie algebra of 
the supergroup of projective transformations. Looking at the action of this subalgebra, 
we get more information. 

For instance, consider the case $n=0$, $k=3$. 
In this case, the 3-torsion in $B_3$
tensored with the algebraic closure $\overline{\Bbb F}_p$
carries an action of the Harish-Chandra pair $({\mathfrak{sl}}(1|3),GL(3))$ 
(which is equivalent to an action of the supergroup $SL(1|3)$).
Note that the trivial 1-dimensional representation of ${\mathfrak{sl}}(1|3)$ has 
a family of lifts $\chi^{\otimes r}$ to $SL(1|3)$, parametrized by integers $r$;
namely $\chi|_{GL(3)}=(\wedge^3V)^{\otimes 3}$.  

Now consider the 3-torsion $X$ in degrees 5,6,7 in $B_3(A_{0,3}(\Bbb Z))$
tensored with the algebraic closure (so $\dim(X)=17$). 
Let us describe the composition series of $X$ as a representation of $SL(1|3)$. 
It is easy to see that the irreducible module for $SL(1|3)$ with lowest degree component 
$V^*\otimes (\wedge^3V)^{\otimes 2}$ is $\tilde V^*\otimes \chi$,
where $\tilde V$ is the 4-dimensional defining representation of $SL(1,3)$. 
As a $GL(3)$-module, this representation is $V^*\otimes (\wedge^2V)^{\otimes 2} \oplus (\wedge^3V)^{\otimes 2}$. 
Thus, we see that $\tilde V^*\otimes \chi$ 
is contained in the composition series of $X$.
The remaining $GL(3)$-modules are the 6-dimensional and 7-dimensional 
irreducible representations of $GL(3)$, and they have to comprise an irreducible $SL(1|3)$-module, 
since any $SL(1|3)$ module with the trivial action of the odd generators of the Lie algebra 
must be trivial for the whole Lie algebra. Thus, we see that the composition series of $X$ has length 2 and consist of a 4-dimensional and a 
13-dimensional irreducible representations, living n degrees 5,6 and 6,7, respectively. 
They have Hilbert series $3t^5+t^6$ and $7t^6+6t^7$. 

In a similar way one should be able to study the 3-torsion  
in $B_i(A_{n,k})$ for $i>3$ (as well as higher $p$-torsion). 

\section{Further work}

Besides attempting to answer the questions stated above and demystify the enigmatic pattern of torsion in
$B_i(A_n)$ and more generally $B_i(A_{n,k})$, we can point out the following interesting directions of future research.

1. Over $\mathbb{Z}[1/2]$, the Lie algebra $W_n$ of vector fields on the $n$-dimensional space
acts on $\bar B_1(A_n)$ and $B_i(A_n)$ for $i\ge 2$, similarly to \cite{FS}
(in characteristic 2, this can be generalized but a  
modification is needed).
It would be interesting to study this action and to apply representation theory of $W_n$ to
the study of the torsion in $B_i(A_n)$. In particular, we conjecture that
the Lie algebra $W_n$ acts trivially
on the torsion in $B_i(A_n)$
(by analogy with the fact that vector fields
on a manifold act trivially on its cohomology).
Note that this conjecture implies the conjecture that the order
of a torsion element divides its degree with respect
to each variable $x_i$, by considering the vector field $x_i\frac{\partial}{\partial x_i}$.

We expect that in this study over $\mathbb{F}_p$, the restricted structure
of the Lie algebra $W_n$ will play a role.

Also it would be interesting to extend this approach to the supercase, where 
there is an action of the Lie superalgebra $W_{n|k}$. 

2. It would be interesting to generalize the results
of this paper from the quotients $B_i$ to the quotients
$N_i:=M_i/M_{i+1}$. (We note that a search in low
degrees found no torsion in these quotients).

3.  It would be interesting to extend the results of this paper to algebras with relations, along the lines
of the appendix to \cite{DKM} and of \cite{BB}.


\pagestyle{empty}\singlespace

\end{document}